\documentclass[twoside, 11pt]{article}

\usepackage[utf8]{inputenc}
\usepackage[T1]{fontenc}
\usepackage[english]{babel}
\usepackage{amsmath, amssymb, amsthm, amstext, amsfonts, a4}
\usepackage{dsfont}
\usepackage[svgnames]{xcolor}
\usepackage{graphics,graphicx,subfigure}
\usepackage{comment}
\usepackage[affil-it]{authblk}
\usepackage{empheq}
\usepackage{diagbox}
\usepackage{esint}

\usepackage{hyperref}
\hypersetup{%
colorlinks=true,
linkcolor=Navy,
citecolor=Brown,
urlcolor=Navy
}

\usepackage{geometry}
\geometry{hmargin=2cm,vmargin=2.5cm}

\numberwithin{equation}{section}

\setcounter{tocdepth}{1}

\setlength\parindent{0em}

\theoremstyle{plain}
	\newtheorem{theorem}{Theorem}[section] 
	\newtheorem{proposition}[theorem]{Proposition}       
	\newtheorem{lemma}[theorem]{Lemma}
	\newtheorem{corollary}[theorem]{Corollary}
        
\theoremstyle{definition}
	\newtheorem{definition}[theorem]{Definition}
    
	\newtheorem{remark}{Remark}[section]

\theoremstyle{remark}

\renewenvironment{proof}{\smallskip\noindent\emph{\textbf{Proof.}}%
  \hspace{1pt}}{\hspace{-5pt}{\nobreak\quad\nobreak\hfill\nobreak%
    $\square$\vspace{2pt}\par}\smallskip\goodbreak}
%

\newcommand{\limit}[2]{{\ \underset{#1 \to #2}{\longrightarrow} \ }}
\newcommand{\bigO}[1]{\ensuremath{\mathop{}\mathopen{}O\mathopen{}\left(#1\right)}}

\newcommand{\ds}[1]{\displaystyle{#1}}

\newcommand{\1}{\mathbf{1}} \renewcommand{\d}[1]{\mathinner{\mathrm{d}{#1}}} 
\newcommand{\p}{\partial} \newcommand{\eps}{\mathrm{\varepsilon}}

\newcommand{\sgn}{\mathop{\rm sgn}}

\newcommand{\N}{\mathbb{N}} \newcommand{\Z}{\mathbb{Z}} \newcommand{\R}{\mathbb{R}} 

 
\newcommand{\Ck}[1]{\mathbf{C}^{#1}} 
\newcommand{\Cc}[1]{\mathbf{C}_\mathbf{c}^{#1}}

 

\renewcommand{\L}[1]{\mathbf{L}^{#1}} 
\newcommand{\Lloc}[1]{\mathbf{L}_{\mathbf{loc}}^{#1}} 
\newcommand{\W}[2]{\mathbf{W}^{#1, #2}} 
\newcommand{\Wloc}[2]{\mathbf{W}_{\mathbf{loc}}^{#1, #2}} 
\newcommand{\BV}{\mathbf{BV}} 

\newcommand{\TV}{\mathbf{TV}}

\newcommand{\bF}{\mathbf{F}} 
\newcommand{\god}{\mathbf{God}}
\newcommand{\eo}{\mathbf{EO}} 

\newcommand{\cC}{\mathcal{C}}
\newcommand{\cG}{\mathcal{G}}
\newcommand{\cR}{\mathcal{R}}
\newcommand{\cP}{\mathcal{P}}

\DeclareFontFamily{U}{mathx}{\hyphenchar\font45}
\DeclareFontShape{U}{mathx}{m}{n}{
      <5> <6> <7> <8> <9> <10>
      <10.95> <12> <14.4> <17.28> <20.74> <24.88>
      mathx10
      }{}
\DeclareSymbolFont{mathx}{U}{mathx}{m}{n}
\DeclareFontSubstitution{U}{mathx}{m}{n}
\DeclareMathAccent{\widecheck}{0}{mathx}{"71}

\begin{document}

\title{\textbf{A LWR model with constraints at moving interfaces}}

\author{Abraham Sylla$^1$}

\date{}

\maketitle

\footnotetext[1]{\texttt{Abraham.Sylla@lmpt.univ-tours.fr} \\
Institut Denis Poisson, CNRS UMR 7013, Université de Tours, Université d'Orléans \\
Parc de Grandmont, 37200 Tours cedex, France \\
ORCID number: 0000-0003-1784-4878}

\thispagestyle{empty}

\begin{abstract}
    We propose a mathematical framework to the study of scalar conservation laws with moving interfaces. This framework is developed on a LWR model with 
    constraint on the flux along these moving interfaces. Existence is proved by means of a finite volume scheme. The originality lies in the local modification 
    of the mesh and in the treatment of the crossing points of the trajectories.

\end{abstract}

\textbf{2020 AMS Subject Classification:} 35L65, 76A30, 65M08.

\textbf{Keywords:} Hyperbolic scalar conservation laws, moving interfaces, flux constraints, finite volume scheme.

\tableofcontents

\newpage

\clearpage
\pagenumbering{arabic}

\section{Introduction}

Being given a regular concave flux $f \in \Ck{2}([0,1], \R^+)$ verifying
\begin{equation}
	\label{bell_shaped}
    f(0) = f(1) = 0, 
    \quad \exists! \; \overline{\rho} \in \mathopen]0, 1\mathclose[, \; 
    \text{for a.e.} \; \rho \in \mathopen]0, 1\mathclose[, \quad 
    f'(\rho)(\overline{\rho}-\rho) > 0,
\end{equation}

and a finite family of trajectories $(y_i)_{i \in [\![1;J]\!]}$ and constraints $(q_i)_{i \in [\![1;J]\!]}$ defined on $\mathopen]s_i, T_i \mathclose[$ ($0 \leq s_i < T_i$), 
we tackle the following problem:
\begin{equation}
	\label{multibus}
	\left\{
		\begin{array}{rlr}
			\p_{t}\rho(x, t) + \p_{x} \left( f(\rho(x, t)) \right) & = 0 & 
            (x, t) \in \R \times \mathopen]0,+\infty\mathclose[ := \Omega \\[5pt]
			\rho(x, 0) & = \rho_o(x) & x \in \R \\[5pt]
			\forall i \in [\![1;J]\!], \quad \left. \left(f(\rho) - \dot y_i(t) \rho \right) \right|_{x= y_i(t)} & \leq q_i(t) & t \in \mathopen]s_i, T_i \mathclose[.
		\end{array}
	\right.
\end{equation}

Systems of the type \eqref{multibus} have naturally arisen in the recent years. Let us give a non-exhaustive review on how our Problem \eqref{multibus} 
relates to the existing literature. 
\begin{itemize}
    \item The authors of \cite{DMLPSW2019, GGLP2020} considered a model very similar to \eqref{multibus}. In their framework, $(y_i)_i$ represented the 
    trajectories of autonomous vehicles, and the authors aimed at modeling the regulation impact on a few autonomous vehicles on the traffic flow. In the same 
    framework but with different applications in mind, the model of \cite{LBCG2021} accounts for the boundedness of traffic acceleration. Note that in each of 
    these models, the trajectories of the moving interfaces $(y_i)_i$ were not given \textit{a priori}, but rather obtained as solutions to an ODE involving the 
    density of traffic, a mechanism reminiscent of \cite{AGS2010, DMG2014, Sylla2020} for instance. Let us also mention the work of \cite{GLM2013} where 
    the authors studied a different model for the situation of several moving bottlenecks.
    \item The numerical aspect of \eqref{multibus} was treated in \cite{CDMG2018} (for one trajectory) and \cite{DMG2016} (for multiple trajectories), where the 
    authors modeled the moving bottlenecks created by buses on a road. 
    \item In a class of problems close to \eqref{multibus}, \textit{i.e.} without constraint on the flux, but still with coupling interfaces/density, the authors of 
    \cite{FGP2020} described the interaction between a platoon of vehicles and the surrounding traffic flow on a highway. 
    \item Problem \eqref{multibus} can be seen as a conservation law with discontinuous flux and special treatments at the interfaces. In that directions, the 
    authors of \cite{KT2004, AM2015, Andreianov2015, BGS2019, Towers2020} studied such problems but with the classical vanishing viscosity coupling at 
    the interfaces.
\end{itemize}

In several of these works \cite{GGLP2020, LBCG2021}, the existence issue is tackled using the wave-front tracking procedure which is very sensible to the 
details of the model. On the other hand, when numerical schemes are considered, see \cite{DMG2016,CDMG2018}, the numerical analysis is usually left out.
\smallskip 

The contribution of this paper is to provide a robust mathematical setting both in the theoretical and numerical aspects of \eqref{multibus}. The proof of 
uniqueness is based upon a combination of Kruzhkov classical method of doubling variables and the theory of dissipative germs in the framework of discontinuous 
flux \cite{AKR2011}, and it is analogous to the one of \cite{AM2015}. To prove existence, we build a finite volume scheme with a grid that adapts locally to 
the trajectories $(y_i)_i$ and to their crossing points, but remains a simple Cartesian grid away from the interfaces. Our work can serve as a basis for 
constructing solutions to more involved models, \textit{e.g.} \textit{via} the splitting approach. As an example of application, we can point out the variant of our recent 
work \cite{Sylla2020} with multiple slow vehicles involved; this is a mildly non-local analogue of the problem considered numerically in \cite{DMG2016}.
\bigskip

As the fundamental ingredient of the well-posedness proof and numerical approximation of \eqref{multibus}, we will first tackle the one trajectory/one 
constraint problem:
\begin{equation}
	\label{1Bus}
	\left\{
		\begin{aligned}
			\p_{t}\rho + \p_{x} \left( f(\rho) \right) & = 0 \\[5pt]
			\rho(\cdot, 0) & = \rho_o \\[5pt]
			\left. \left(f(\rho) - \dot y(t) \rho \right) \right|_{x= y(t)} & \leq q(t) & t > 0,
		\end{aligned}
	\right.
\end{equation}

with $y \in \Wloc{1}{\infty}(\mathopen]0, +\infty\mathclose[, \R)$ and $q \in \Lloc{\infty}(\mathopen]0, +\infty\mathclose[, \R)$. Models in the class of \eqref{1Bus} have been greatly investigated in the past 
few decades. Motivated by the modeling of tollgates and traffic lights for instance, the authors of \cite{CG2007} considered \eqref{1Bus} with the trivial 
trajectory $y \equiv 0$ and proved a well-posedness result in the $\BV$ framework (\textit{i.e.} with both $q$ and $\rho_o$ with bounded variation, locally). The authors 
of \cite{AGS2010} then extended the well-posedness in the $\L{\infty}$ framework and also constructed a convergent numerical scheme. More recently, in 
\cite{DMG2014, DMG2017, Sylla2020}, the authors studied a variant of \eqref{1Bus} in which $\rho$ and $\dot y$ were coupled \textit{via} an ODE. The coupling was thought 
to model the influence of a slow vehicle, traveling at speed $\dot y$, on road traffic.
\smallskip 

The reduction of \eqref{multibus} to localized problem \eqref{1Bus} requires the construction of a finite volume scheme in the original coordinates $(x, t)$, 
while the treatment of \eqref{1Bus} in the literature is most often based upon the rectification of the interface \textit{via} a variable change, see 
\cite{DMG2014, DMG2017, Sylla2020}. For \eqref{multibus}, this approach leads to a cumbersome and singular construction, see \cite{AM2015}. In our well-posedness 
analysis and approximation of \eqref{1Bus}, having in mind \eqref{multibus}, we will not change the coordinate system.
\smallskip 

Let us detail how the paper is organized. Sections \ref{1Bus_Section_uniqueness}-\ref{1Bus_Section_existence} are devoted to Problem \eqref{1Bus}. We start by 
giving two definitions of solutions. One, most frequently used in traffic dynamics (see \cite{CG2007, AS2020}), is composed of classical Kruzhkov entropy 
inequalities with reminder term taking into account the constraint and of a weak formulation for the constraint, see Definition \ref{1Bus_def1}. The second 
definition emanates from the theory of conservation laws with dissipative interface coupling (see \cite{AKR2011,Andreianov2015}). It consists of Kruzhkov 
entropy inequalities with test functions that vanish along the interface $\{x = y(t)\}$ and of an explicit treatment of the traces of the solution along the 
interface, see Definition \ref{1Bus_def2}. Before tackling the well-posedness issue, we prove that these two definitions are equivalent, see Propositions 
\ref{1Bus_def21_pp}-\ref{1Bus_def21_pp}, similarly to what the authors of \cite{AGS2010} did. Uniqueness follows from the stability obtained in 
Section \ref{1Bus_Section_uniqueness}, see Theorem \ref{1Bus_stability1}. In Section \ref{1Bus_Section_existence}, we construct a finite volume 
scheme for \eqref{1Bus} and prove of its convergence. In the construction, we do not rectify the trajectory, but instead we locally modify the mesh to mold 
the trajectory. Moreover, we fully make use of techniques and results put forward by the author of \cite{TowersOSLC} to derive localized $\BV$ estimates away 
from the interface, essential to obtain strong compactness for the approximate solutions created by the scheme, see Corollary \ref{1Bus_oslc3_cor}. This is a 
way to highlight the generality of the compactness technique of \cite{TowersOSLC}.

In Section \ref{multibus_Section_wp}, we get back to the original problem \eqref{multibus}. Our strategy is to \textit{assemble} the study of 
\eqref{multibus} from several local studies of \eqref{1Bus} with the help of a partition of unity argument. This concerns, in particular, the convergence of 
finite volume approximation of \eqref{multibus} which is addressed \textit{via} a localization argument. However, the scheme needs to be defined globally, which makes it 
impossible to use the rectification strategy as soon as the interfaces have crossing points, see \cite{AM2015} for a singular rectification strategy.

\section{Uniqueness and stability for the single trajectory problem}
\label{1Bus_Section_uniqueness}

The content of this section is not original in the sense that it is a rigorous adaptation and assembling of existing techniques reminiscent of 
\cite{Volpert1967, Kruzhkov1970, CG2007, AGS2010, AKR2011}.

\subsection{Equivalent definitions of solutions}

Throughout the paper, for all $s \in \R$, we denote by
\[
    \forall \rho \in [0,1], \; F_s(\rho) := f(\rho) - s\rho \quad \text{and} \quad \forall a,b \in [0,1], \; \Phi_s(a,b) := \sgn(a-b)(F_s(a) - F_s(b)),
\]

the normal flux through $\{x = x_o + st\}$ ($x_o \in \R$) and its entropy flux associated with the Kruzhkov entropy $\rho \mapsto |\rho-\kappa|$, 
for all $\kappa \in [0,1]$, see \cite{Kruzhkov1970}. Let us also denote by $\Gamma$ the trajectory/interface:
\[
    \Gamma := \{(y(t), t) \; : \; t \in [0, +\infty[\}.  
\]

\begin{definition}
    \label{1Bus_def1}
    Let $\rho_o \in \L{\infty}(\R, [0, 1])$. We say that $\rho \in \L{\infty}(\Omega, [0, 1])$ is an admissible entropy solution to \eqref{1Bus} if

    (i) for all test functions $\varphi \in \Cc{\infty}(\overline{\Omega}, \R^+)$ and $\kappa \in [0,1]$, the following entropy inequalities are 
    verified:
	\begin{equation}
        \label{1Bus_ei1}
        \begin{aligned}
            \int_{0}^{+\infty} \int_{\R} & \biggl( |\rho-\kappa| \p_{t}\varphi + \Phi(\rho,\kappa) \p_{x} \varphi \biggr) \d x \d t  
            + \int_{\R} |\rho_o(x)-\kappa|\varphi(x,0) \d x \\[5pt]
            & + \int_{0}^{+\infty} \cR_{\dot y(t)}(\kappa,q(t)) \varphi(y(t),t) \d t \geq 0,
        \end{aligned}
	\end{equation}

    where
    \[
        \cR_{\dot y(t)}(\kappa, q(t)) :=  2 \left( F_{\dot y(t)}(\kappa) - \min \left\{ F_{\dot y(t)}(\kappa), q(t) \right\} \right);
    \]

    (ii) for all test functions $\varphi \in \Cc{\infty}(\Omega, \R^+)$ the following constraint inequalities are verified:
	\begin{equation}
        \label{1Bus_ci1}
        - \iint_{\Omega^+} \biggl( \rho \p_{t} \varphi + f(\rho) \p_x \varphi \biggr) \d x \d t \leq \int_{0}^{+\infty} q(t) \varphi(y(t), t) \d t,
    \end{equation}

    where $\ds{\Omega^+ := \left\{ (x,t) \in \Omega \; : \; x > y(t) \right\}}$.
\end{definition}

\begin{remark}
    \label{1Bus_rk1}
    Taking $\kappa = 0$, then $\kappa = 1$ in \eqref{1Bus_ei1}, from the condition $\rho(x,t) \in [0,1]$ a.e. we deduce that any admissible weak solution to 
    Problem \eqref{1Bus} is also a distributional solution to the conservation law $\p_{t}\rho + \p_{x} f(\rho) = 0$. If $\rho$ is a regular enough solution, 
    then for all test functions $\varphi \in \Cc{\infty}(\Omega, \R^+)$, we have 
    \[
        \begin{aligned}
            0 
            & = \iint_{\Omega^+} \text{div}_{(x,t)} \begin{pmatrix} f(\rho) \\ \rho \end{pmatrix} \varphi \ \d{x} \d{t} \\[5pt]
            & = \int_{\p \Omega^+} \begin{pmatrix} f(\rho) \varphi \\ \rho \varphi \end{pmatrix} \cdot 
            \begin{pmatrix} -1 \\ \dot y(t) \end{pmatrix} \d{t} 
            - \iint_{\Omega^+} \begin{pmatrix} f(\rho) \\ \rho \end{pmatrix} \cdot \nabla_{x,t} \varphi \ \d{x} \d{t} \\[5pt]
            & = - \int_0^{+\infty} \biggl( \left( f(\rho) - \dot y(t) \rho \right)_{|x = y(t)} \biggr) \varphi(y(t), t) \d{t}
            - \iint_{\Omega^+} \biggl( \rho \p_{t} \varphi + f(\rho) \p_x \varphi \biggr) \d x \d t.
        \end{aligned}
    \]

    Moreover, if $\rho$ satisfies the flux inequality of \eqref{1Bus} a.e. on $\mathopen]0, +\infty\mathclose[$, then the previous computations lead to 
    \[
        - \iint_{\Omega^+} \biggl( \rho \p_{t} \varphi + f(\rho) \p_x \varphi \biggr) \d x \d t \leq \int_0^{+\infty} q(t) \varphi(y(t), t) \d{t};
    \]

    this is where inequalities \eqref{1Bus_ci1} come from. Note how they make sense irrespective of the regularity of $\rho$. Integrating on 
    $\ds{\Omega^- := \left\{ (x,t) \in \Omega \; : \; x < y(t) \right\}}$ would lead to similar and equivalent inequalities.
\end{remark}

Definition \ref{1Bus_def1} is well suited for passage to the limit of a.e. convergent sequences of exact or approximate solutions. However, we cannot derive 
uniqueness by the standard arguments like in the classical case of Kruzhkov. Using an equivalent notion of solution, which we adapt from \cite{AKR2011}, based on 
explicit treatment of traces of $\rho$ on $\Gamma$, we rather combine the arguments of \cite{Kruzhkov1970} and \cite{Volpert1967}. In this definition a couple 
plays a major role, the one which realizes the equality in the flux constraint in \eqref{1Bus}. More precisely, fix first $s \geq 0$. By \eqref{bell_shaped} 
and concavity of $f$, for all $q \in [0, \max F_s)$, the equation $F_s(\rho) = q$ admits exactly two solutions in $[0, 1]$, see Figure \ref{fig1}, left. 
The same way, if $s \leq 0$, then for all $q \in [-\dot s, \max F_{s})$, the equation still admits two solutions in $[0, 1]$. The couple formed by these two 
solutions, 
denoted by $\left( \widehat{\rho}_{s}(q), \widecheck{\rho}_{s}(q) \right)$ in Definition \ref{1Bus_def_germ} below, will serve both in the proof of uniqueness and 
existence.

\begin{figure}[!htp]
    \begin{center}
        \includegraphics[scale = 0.66]{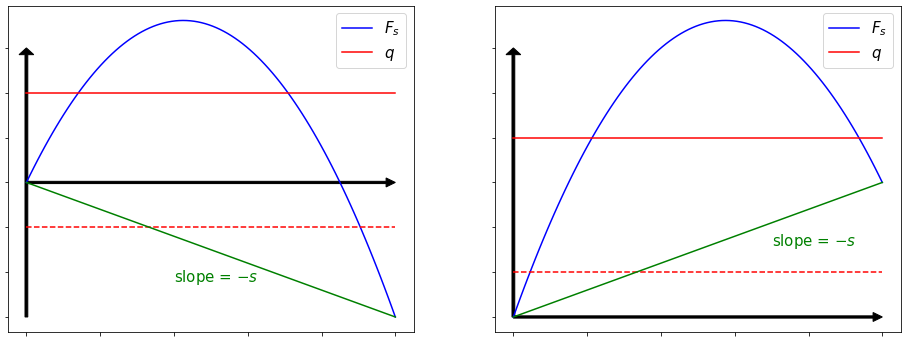}
    \end{center}
    \vspace*{-5mm}
    \caption{Illustration of Assumption \eqref{1Bus_assumption_constraint}}
    \label{fig1}
\end{figure}

Following the previous discussion, in the sequel, we will assume that $q$ verifies the following assumption:
\begin{equation}
    \label{1Bus_assumption_constraint}
    \text{for a.e.} \; t > 0, \quad 
    q(t) \in [0, \max F_{\dot y(t)}\mathclose[ \; \text{if} \; \dot y(t) \geq 0 
    \;\; \text{and} \;\; 
    q(t) \in [-\dot y(t), \max F_{\dot y(t)}\mathclose[ \; \text{if} \; \dot y(t) < 0.
\end{equation}

In particular, remark that 
\begin{equation}
    \label{1Bus_assumption_constraint2}
    \text{for a.e.} \; t > 0, \quad \dot y(t) + q(t) \geq 0.
\end{equation}

\begin{definition}
    \label{1Bus_def_germ}
    Let $s \in \R^+$ and $q \in [0, \max F_s\mathclose[$, or $s \in \R^-$ and 
    $q \in [-s, \max F_s\mathclose[$. The admissibility germ for the conservation law in \eqref{1Bus} 
    associated with the constraint $\ds{F_s(\rho)_{|x = st} \leq q}$ is the subset $\cG_s(q) \subset [0,1]^2$ defined as the union: 
    \[
        \cG_s(q) := 
        \underbrace{\left( \widehat{\rho}_{s}(q), \widecheck{\rho}_{s}(q) \right)}_{\cG^1_s(q)} 
        \bigcup \underbrace{\{(\kappa, \kappa) \; : \; F_s(\kappa) \leq q\}}_{\cG^2_s(q)} 
        \bigcup \underbrace{\left\{ (k_l, k_r) \; : \; k_l < k_r \; \text{and} \; 
        F_s(k_l) = F_s(k_r) \leq q \right\}}_{\cG^3_s(q)},
    \]

    where, due to the bell-shaped profile of $F_s$, the couple $\left( \widehat{\rho}_{s}(q), \widecheck{\rho}_{s}(q) \right)$ is uniquely defined by the 
    conditions
	\[
		F_s(\widehat{\rho}_{s}(q)) = F_s(\widecheck{\rho}_{s}(q)) = q \quad \text{and} \quad \widehat{\rho}_{s}(q) > \widecheck{\rho}_{s}(q).
	\]
\end{definition}

\begin{lemma}
    \label{1Bus_germ_dissipativity}
    For all $s \in \R^+$ and $q \in [0, \max F_s\mathclose[$, and for all $s \in \R^-$ and $q \in [-s, \max F_s\mathclose[$, the admissibility germ $\cG_s(q)$ is $\L{1}$-dissipative 
    in the sense that:

    (i) for all $(k_l, k_r) \in \cG_s(q)$, $F_s(k_l) = F_s(k_r)$ (Rankine-Hugoniot condition);

    (ii) for all $(k_l, k_r), (c_l, c_r) \in \cG_s(q)$,
    \begin{equation}
        \label{1Bus_L1D}
        \Phi_s(k_l, c_l) \geq \Phi_s(k_r, c_r).
    \end{equation}
\end{lemma}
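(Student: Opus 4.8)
The plan is to dispose of (i) at once and then prove (ii) by a short symmetry‑reduced case analysis, the only real preparatory work being a description of the level sets of $F_s$. Point (i) is built into the definition: on $\cG^1_s(q)$ one has $F_s(\widehat\rho_s(q)) = F_s(\widecheck\rho_s(q)) = q$, on $\cG^2_s(q)$ the identity is trivial, and on $\cG^3_s(q)$ the equality $F_s(k_l)=F_s(k_r)$ is imposed. So all the content is in (ii).

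For (ii), write $\widehat\rho := \widehat\rho_s(q)$, $\widecheck\rho := \widecheck\rho_s(q)$, and first record the geometry of $F_s$. Since $f$ is concave and $s\,\mathrm{id}$ is affine, $F_s \in \Ck{2}([0,1],\R)$ is concave; under \eqref{1Bus_assumption_constraint} one has $q < \max F_s$ together with $F_s(0) \le q$ and $F_s(1) \le q$ (for $s<0$ this last inequality is exactly why one requires $q \ge -s = F_s(1)$). Concavity then forces $\{F_s > q\}$ to be a nonempty open subinterval of $[0,1]$ with endpoints $\widecheck\rho < \widehat\rho$, i.e.\ $\{\rho \in [0,1] : F_s(\rho) \le q\} = [0,\widecheck\rho] \cup [\widehat\rho,1]$, with $F_s$ non‑decreasing on $[0,\widecheck\rho]$ and non‑increasing on $[\widehat\rho,1]$; concavity also forbids $F_s$ from being constant at a level $\le q$ on a subinterval, which is what makes $\widecheck\rho,\widehat\rho$ the only two roots of $F_s=q$. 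I will use two consequences repeatedly: (a) if $a,b \in [0,\widecheck\rho]$ then $\Phi_s(a,b) = |F_s(a)-F_s(b)| \ge 0$, and if $a,b \in [\widehat\rho,1]$ then $\Phi_s(a,b) = -|F_s(a)-F_s(b)| \le 0$ (because $\sgn(a-b)$ agrees with, resp.\ is opposite to, $\sgn(F_s(a)-F_s(b))$ on a monotone branch, the flat case giving $0=0$); (b) every $(k_l,k_r) \in \cG^2_s(q) \cup \cG^3_s(q)$ satisfies $k_l,k_r \in [0,\widecheck\rho] \cup [\widehat\rho,1]$, $k_l \le k_r$ and $F_s(k_l)=F_s(k_r) \le q$, hence either $k_l=k_r \in [0,\widecheck\rho]$, or $k_l=k_r \in [\widehat\rho,1]$, or $k_l \le \widecheck\rho < \widehat\rho \le k_r$.

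Then I prove \eqref{1Bus_L1D}. Since $\Phi_s(a,b) = \Phi_s(b,a)$, the inequality is symmetric under exchanging the two germ elements, so it suffices to treat, up to this exchange, the pairings of the three ``types'' $\cG^1,\cG^2,\cG^3$. If both elements lie in $\cG^2_s(q) \cup \cG^3_s(q)$, then by (b) they are stationary Kruzhkov‑admissible discontinuities of the concave flux $F_s$; thus $\cG^2_s(q) \cup \cG^3_s(q)$ is a sub‑germ of the $\L{1}$‑dissipative Kruzhkov germ $\{(a,b) : F_s(a)=F_s(b),\ a \le b\}$ and \eqref{1Bus_L1D} follows — equivalently, it is checked in one line from (a) and $x \ge -|x|$ by splitting on whether $k_l,c_l$ (resp.\ $k_r,c_r$) sit on the increasing or on the decreasing branch. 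If both elements equal the constraint point $(\widehat\rho,\widecheck\rho)$, then \eqref{1Bus_L1D} is just $0 \ge 0$. In the remaining case $(\widehat\rho,\widecheck\rho)$ is paired with some $(c_l,c_r) \in \cG^2_s(q) \cup \cG^3_s(q)$, and using $\widecheck\rho < \widehat\rho$ and $F_s(\widehat\rho)=F_s(\widecheck\rho)=q \ge F_s(c_l)=F_s(c_r)$ together with the three sub‑cases of (b): when $c_l=c_r \le \widecheck\rho$ one has $\Phi_s(\widehat\rho,c_l) = q - F_s(c_l) = \Phi_s(\widecheck\rho,c_r)$; when $c_l=c_r \ge \widehat\rho$ one has $\Phi_s(\widehat\rho,c_l) = F_s(c_l) - q = \Phi_s(\widecheck\rho,c_r)$; and when $c_l \le \widecheck\rho < \widehat\rho \le c_r$ one has $\Phi_s(\widehat\rho,c_l) = q - F_s(c_l) \ge 0 \ge F_s(c_r) - q = \Phi_s(\widecheck\rho,c_r)$. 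This exhausts all cases.

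I expect the only genuinely delicate point to be the level‑set description in the second step — verifying that assumption \eqref{1Bus_assumption_constraint} is precisely what guarantees two distinct roots $\widecheck\rho < \widehat\rho$ of $F_s=q$ in $[0,1]$, that concavity rules out intermediate roots, and that $F_s$ is (at worst) flat only at levels $> q$ on each outer branch. Once this geometry is in place the inequalities are one‑liners, and the only remaining effort is the bookkeeping of the $\cG^i$–$\cG^j$ combinations, which the symmetry $\Phi_s(a,b)=\Phi_s(b,a)$ roughly halves.
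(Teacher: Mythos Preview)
Your proof is correct and follows essentially the same approach as the paper: a case analysis over the pairings $\cG^i_s(q) \times \cG^j_s(q)$, where the paper simply tabulates the value of $\Phi_s(k_l,c_l)-\Phi_s(k_r,c_r)$ in a $3\times 3$ table and observes that all entries are nonnegative. Your organization differs mildly---you invest more in the preparatory geometry of $F_s$, exploit the symmetry $\Phi_s(a,b)=\Phi_s(b,a)$ to cut the number of cases, and observe that $\cG^2_s(q)\cup\cG^3_s(q)$ is a sub-germ of the classical Kruzhkov germ---but the substance is the same.
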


\begin{proof}
    The point \textit{(i)} is obvious from the definition. Let us prove the dissipative 
    feature \eqref{1Bus_L1D}. The following table summarizes which values can take the 
    difference $\Delta = \Phi_s(k_l, c_l) - \Phi_s(k_r, c_r)$, depending on in 
    which parts of the germ the couples $(k_l, k_r), (c_l, c_r) \in \cG_s(q)$ belong 
    to. 
    \begin{center}
        \renewcommand{\arraystretch}{1.5}
        \begin{tabular}{|c||c|c|c|}
            \hline
            \diagbox[width = 10em]{$(c_l, c_r)$}{$(k_l, k_r)$}
            & $\in \cG_s^1(q)$ & $\in \cG_s^2(q)$ & $\in \cG_s^3(q)$ \\[5pt]
            \hline
            $\in \cG_s^1(q)$ & $0$ & $0$ & $0$ or $2(q - F_s(k_l))$ \\[5pt]
            \hline
            $\in \cG_s^2(q)$ & $0$ & $0$ & $0$ or $2|F_s(c) - F_s(k_l)|$ \\[5pt]
            \hline
            $\in \cG_s^3(q)$ & $0$ or $2(q - F_s(c_l))$ & $0$ or $2|F_s(c_l) - F_s(k)|$ & $0$ or $2|F_s(c_l) - F_s(k_l)|$ \\[5pt]
            \hline
        \end{tabular}
    \end{center}

    Having in mind the definition of $\cG^3_s(q)$, we can conclude that $\Delta \geq 0$.
\end{proof}

\begin{definition}
    \label{1Bus_def2}
    Let $\rho_o \in \L{\infty}(\R, [0, 1])$. We say that $\rho \in \L{\infty}(\Omega, [0, 1])$ is a $\cG_{\dot y}(q)$-entropy solution to \eqref{1Bus} if:

    (i) for all test functions $\varphi \in \Cc{\infty}(\overline{\Omega} \backslash \Gamma, \R^+)$ and $\kappa \in [0,1]$, the following entropy
    inequalities are verified:
	\begin{equation}
		\label{1Bus_ei2}
        \int_{0}^{+\infty} \int_{\R} \biggl( |\rho-\kappa| \p_{t}\varphi + \Phi(\rho,\kappa) \p_{x} \varphi \biggr) \d x \d t 
        + \int_{\R}|\rho_o(x)-\kappa|\varphi(x,0) \d x \geq 0;
    \end{equation}

    (ii) for a.e. $t > 0$,
	\begin{equation}
        \label{1Bus_ci2}
        (\rho(y(t)-,t), \rho(y(t)+,t)) \in \cG_{\dot y(t)}(q(t)).
    \end{equation}
\end{definition}

\begin{remark}
    Condition \eqref{1Bus_ci2} is to be understood in the sense of strong traces along $\Gamma$. An important fact we stress is that it is not 
    restrictive to assume that entropy solutions, \textit{i.e.} bounded functions verifying \eqref{1Bus_ei2}, admit strong traces. Usually, it is ensured provided a 
    nondegeneracy assumption on the flux function:
    \begin{equation}
        \label{1Bus_nonD}
        \text{for any nonempty interval} \; \mathopen]a, b \mathclose[ \subset 
        \mathopen]0, 1 \mathclose[, \quad f \; \1_{\mathopen]a, b \mathclose[} \; \text{is not constant.}
    \end{equation}

    In the context of traffic flow, however, we sometimes consider fluxes which do not verify \eqref{1Bus_nonD}. Such fluxes, which have linear parts, usually 
    model constant traffic velocity for small densities. In those situations, and when $y \equiv 0$, one can prove that under a mild assumption on the 
    constraint, if the initial datum has bounded variation, then solutions to \eqref{1Bus} are in $\L{\infty}(\mathopen]0, T \mathclose[, \BV(\R))$, and traces are then to be 
    understood in the sense of $\BV(\R)$ functions, see \cite[Theorem 3.2]{Sylla2020}. Also note that the germ formalism can be adapted to the situations where 
    the flux is degenerate and no variation bound is assumed, see \cite[Remarks 2.2, 2.3]{AKR2011}.
\end{remark}

We now prove that Definitions \ref{1Bus_def1} and \ref{1Bus_def2} are equivalent.

\begin{proposition}
    \label{1Bus_def12_pp}
    Any admissible entropy solution to \eqref{1Bus} is a $\cG_{\dot y}(q)$-entropy solution.
\end{proposition}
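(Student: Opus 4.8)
The plan is to establish the two items of Definition~\ref{1Bus_def2} in turn. Item~(i) is immediate: any $\varphi \in \Cc{\infty}(\overline{\Omega} \backslash \Gamma, \R^{+})$ has compact support disjoint from $\Gamma$, hence it extends by zero to an element of $\Cc{\infty}(\overline{\Omega}, \R^{+})$ and is admissible in \eqref{1Bus_ei1}; since $\varphi(y(t),t) \equiv 0$, the reminder term in \eqref{1Bus_ei1} drops out and \eqref{1Bus_ei1} reduces to \eqref{1Bus_ei2}. In particular, testing with $\varphi \in \Cc{\infty}(\Omega^{+}, \R^{+})$ or with $\varphi \in \Cc{\infty}(\Omega^{-}, \R^{+})$ shows that $\rho$ is a Kruzhkov entropy solution of $\p_{t}\rho + \p_{x}f(\rho) = 0$ in each of the open half-domains $\Omega^{\pm}$.

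Item~(ii) is the real content. As $y \in \Wloc{1}{\infty}$, $\Gamma$ is a Lipschitz graph, and by the discussion in the remark following Definition~\ref{1Bus_def2} (under the nondegeneracy assumption \eqref{1Bus_nonD}, or in the $\BV$ framework when $f$ is degenerate) we may assume $\rho$ admits one-sided strong traces along $\Gamma$; write $\rho^{\mp}(t) := \rho(y(t)\mp, t)$. Fix $t > 0$ where these traces exist and abbreviate $s := \dot y(t)$, $q := q(t)$, $q' := F_{s}(\rho^{-}(t))$. I would then record three facts. (a)~By Remark~\ref{1Bus_rk1} (taking $\kappa = 0$, then $\kappa = 1$ in \eqref{1Bus_ei1}) $\rho$ is a distributional solution of the conservation law on $\Omega$, so its Rankine--Hugoniot condition on $\Gamma$ reads $F_{s}(\rho^{-}(t)) = F_{s}(\rho^{+}(t)) = q'$. (b)~Since $\rho$ solves the conservation law in $\Omega^{+}$ and has a strong trace on $\Gamma$, the integration by parts of Remark~\ref{1Bus_rk1} turns \eqref{1Bus_ci1} into $\int_{0}^{+\infty} F_{\dot y(t)}(\rho^{+}(t))\, \varphi(y(t),t)\, \d t \leq \int_{0}^{+\infty} q(t)\, \varphi(y(t),t)\, \d t$ for every $\varphi \in \Cc{\infty}(\Omega, \R^{+})$, hence $q' = F_{s}(\rho^{+}(t)) \leq q$ for a.e.\ $t$. (c)~Localizing \eqref{1Bus_ei1} at $\Gamma$ --- using the strong traces, the fact from item~(i) that $\rho$ is a Kruzhkov solution off $\Gamma$, and the divergence-theorem computation of Remark~\ref{1Bus_rk1} on both sides of $\Gamma$, the $s|\cdot-\kappa|$ contributions cancelling to leave the moving-frame flux $\Phi_{s}$ --- yields the pointwise interface entropy inequality
\[
    \Phi_{s}(\rho^{-}(t), \kappa) - \Phi_{s}(\rho^{+}(t), \kappa) + \cR_{s}(\kappa, q) \geq 0 \qquad \text{for all } \kappa \in [0,1],
\]
valid for a.e.\ $t > 0$; recall $\cR_{s}(\kappa, q) = 2(F_{s}(\kappa) - q)^{+}$.

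To conclude, I would distinguish two cases. If $\rho^{-}(t) \leq \rho^{+}(t)$, then (a) and (b) place $(\rho^{-}(t), \rho^{+}(t))$ in $\cG_{s}^{2}(q)$ (when $\rho^{-}(t) = \rho^{+}(t)$) or in $\cG_{s}^{3}(q)$ (when $\rho^{-}(t) < \rho^{+}(t)$). If $\rho^{-}(t) > \rho^{+}(t)$, then by (a) and the bell-shaped profile of $F_{s}$ the traces $\rho^{+}(t) < \rho^{-}(t)$ are precisely the two roots of $\{F_{s} = q'\}$; since $q' \leq q$ by (b), the root $\widecheck{\rho}_{s}(q)$ lies in $[\rho^{+}(t), \rho^{-}(t)]$, so I may take $\kappa = \widecheck{\rho}_{s}(q)$ in the interface inequality. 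Then $F_{s}(\kappa) = q$, so $\cR_{s}(\kappa, q) = 0$, and evaluating $\Phi_{s}(\rho^{\mp}(t), \kappa)$ with the appropriate signs gives $2(q' - q) \geq 0$; combined with (b) this forces $F_{s}(\rho^{\pm}(t)) = q$, and since under \eqref{1Bus_assumption_constraint} the equation $F_{s}(\cdot) = q$ has exactly the two roots $\widecheck{\rho}_{s}(q) < \widehat{\rho}_{s}(q)$, we obtain $(\rho^{-}(t), \rho^{+}(t)) = (\widehat{\rho}_{s}(q), \widecheck{\rho}_{s}(q)) \in \cG_{s}^{1}(q)$. In all cases $(\rho(y(t)-,t), \rho(y(t)+,t)) \in \cG_{\dot y(t)}(q(t))$, which is \eqref{1Bus_ci2}.

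I expect the genuine obstacle to be fact (c): the rigorous passage from the global inequality \eqref{1Bus_ei1} to the pointwise relation on $\Gamma$. This is where the strong-trace machinery is really used --- building the approximations of the interface, handling the flux terms along a merely Lipschitz curve, and, when $f$ is degenerate, justifying the existence of the traces at all; the integration by parts in fact (b) rests on the same regularity. Everything else is routine bookkeeping with the concave profile of $F_{s}$ and the two branches of $\{F_{s} = q\}$.
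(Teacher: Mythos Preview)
Your proof is correct and follows essentially the same route as the paper: item~(i) is immediate, Rankine--Hugoniot plus \eqref{1Bus_ci1} give $F_{s}(\rho^{\pm}) = q' \leq q$, the case $\rho^{-} \leq \rho^{+}$ lands in $\cG^{2}_{s}(q) \cup \cG^{3}_{s}(q)$, and in the case $\rho^{-} > \rho^{+}$ one localizes \eqref{1Bus_ei1} at $\Gamma$ via a cut-off (your fact~(c), which the paper carries out explicitly with the hat function $\xi_{n}$) to obtain the pointwise interface inequality, then chooses a particular $\kappa$ to force $q' \geq q$. The only difference is cosmetic: the paper takes $\kappa = \mathrm{argmax}\, F_{s}$, you take $\kappa = \widecheck{\rho}_{s}(q)$; both choices satisfy $\rho^{+} \leq \kappa \leq \rho^{-}$ and both reduce the interface inequality to $2(q' - q) \geq 0$.
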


\begin{proof}
    Fix $\rho_o \in \L{\infty}(\R, [0, 1])$ and let 
    $\rho \in \L{\infty}(\Omega, [0, 1])$ be an admissible entropy solution 
    to \eqref{1Bus}. Let $\varphi \in \Cc{\infty}(\overline{\Omega}, \R^+)$ and 
    $\kappa \in [0,1]$. If $\varphi$ vanishes along $\Gamma$, then \eqref{1Bus_ei1} 
    becomes \eqref{1Bus_ei2}. Moreover, it is known that the Rankine-Hugoniot condition is contained in \eqref{1Bus_ei1}. Combining it with \eqref{1Bus_ci1} gives us:
    \begin{equation}
        \label{1Bus_def12_eq1}
        \text{for a.e.} \; t > 0, \quad F_{\dot y(t)}(\rho(y(t)-,t)) = F_{\dot y(t)}(\rho(y(t)+,t)) \leq q(t).
    \end{equation}

    Let us show that for a.e. $t > 0$, $(\rho(y(t)-,t), \rho(y(t)+,t)) \in \cG_{\dot y(t)}(q(t))$.

    \textbf{Case 1:} $\rho(y(t)-,t) \leq \rho(y(t)+,t)$. Condition \eqref{1Bus_def12_eq1} implies that 
    $(\rho(y(t)-,t), \rho(y(t)+,t)) \in \cG^2_{\dot y(t)}(q(t)) \cup \cG^3_{\dot y(t)}(q(t))$. 
    
    \textbf{Case 2:} $\rho(y(t)-,t) > \rho(y(t)+,t)$. Suppose now that 
    $\varphi \in \Cc{\infty}(\Omega, \R^+)$ and fix $n \in \N^*$. By a standard 
    approximation argument, we can apply \eqref{1Bus_ei1} with the Lipschitz test function $\xi_n \varphi$, where $\xi_n$ is the cut-off function:
    \[
        \xi_n(x,t) = 
        \left\{
            \begin{array}{ccc}
                1 & \text{if} & \ds{|x - y(t)| < \frac{1}{n}} \\[5pt]
                2 - n|x-y(t)| & \text{if} & \ds{\frac{1}{n} \leq |x - y(t)| \leq \frac{2}{n}} \\[5pt]
                0 & \text{if} & \ds{|x - y(t)| > \frac{2}{n}}.
            \end{array}
        \right.
    \]

    This yields:
    \[
        \begin{aligned}
            & \int_{0}^{+\infty} \int_{\R} |\rho-\kappa| \left(\xi_n \p_{t}\varphi 
            + n \dot y(t) \sgn(x - y(t)) \1_{\left\{\frac{1}{n} < |x - y(t)| < \frac{2}{n} \right\}} \varphi \right) \d{x} \d{t} \\[5pt] 
            & + \int_{0}^{+\infty} \int_{\R} \Phi(\rho,\kappa) 
            \left( \xi_n \p_{x} \varphi - n \sgn(x-y(t) \1_{\left\{\frac{1}{n} < |x - y(t)| < \frac{2}{n} \right\}} \varphi \right) \d x \d t \\[5pt]
		    & + \int_{0}^{+\infty} \cR_{\dot y(t)}(\kappa,q(t)) \varphi(y(t),t) \d t \geq 0.
        \end{aligned}
    \]

    Taking the limit when $n \to +\infty$, we obtain:
    \[
        \int_0^{+\infty} \biggl( \Phi_{\dot y(t)} \left(\rho(y(t)-,t), \kappa \right) - \Phi_{\dot y(t)} \left(\rho(y(t)+,t), \kappa \right) 
        + \cR_{\dot y(t)}(\kappa,q(t)) \biggr) \varphi(y(t), t) \d{t} \geq 0,
    \]

    which implies that for a.e. $t > 0$ and for all $\kappa \in [0,1]$,
    \[
        \Phi_{\dot y(t)} \left(\rho(y(t)-,t), \kappa \right) - \Phi_{\dot y(t)} \left(\rho(y(t)+,t), \kappa \right) + \cR_{\dot y(t)}(\kappa,q(t)) \geq 0.
    \]

    Taking in particular $\kappa = \text{argmax}(F_{\dot y(t)})$, we get:
    \begin{equation}
        \label{1Bus_def12_eq2}
        \Phi_{\dot y(t)} \left(\rho(y(t)-,t), \kappa \right) - \Phi_{\dot y(t)} \left(\rho(y(t)+,t), \kappa \right) + 2 (F_{\dot y(t)}(\kappa) - q(t)) \geq 0.
    \end{equation}

    Since $\rho(y(t)-,t) > \rho(y(t)+,t)$, \eqref{1Bus_def12_eq2} leads to $\ds{F_{\dot y(t)}(\rho(y(t)-,t)) \geq q(t)}$, which combined with 
    \eqref{1Bus_def12_eq1}, implies $\ds{F_{\dot y(t)}(\rho(y(t)-,t)) = F_{\dot y(t)}(\rho(y(t)+,t)) = q(t)}$. We deduce that 
    $(\rho(y(t)-,t), \rho(y(t)+,t)) \in \cG^1_{\dot y(t)}(q(t))$, which completes the proof.
\end{proof}

\begin{proposition}
    \label{1Bus_def21_pp}
    Any $\cG_{\dot y}(q)$-entropy solution to \eqref{1Bus} is an admissible entropy solution.
\end{proposition}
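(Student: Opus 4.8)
The strategy is to reverse the argument of Proposition~\ref{1Bus_def12_pp}. Let $\rho$ be a $\cG_{\dot y}(q)$-entropy solution with initial datum $\rho_o$. Choosing in \eqref{1Bus_ei2} test functions supported in the open half-planes $\Omega^+$ or $\Omega^-$ together with $\kappa\in\{0,1\}$, the argument of Remark~\ref{1Bus_rk1} shows that $\rho$ is a distributional solution of $\p_t\rho+\p_x f(\rho)=0$ in each of $\Omega^+$ and $\Omega^-$; moreover the very meaningfulness of \eqref{1Bus_ci2} presupposes that $\rho$ admits strong one-sided traces $\rho(y(t)\pm,t)$ along $\Gamma$. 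These two facts, together with the two items of Definition~\ref{1Bus_def2}, are all that will be used.

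\emph{The constraint inequality \eqref{1Bus_ci1}.} For $\varphi\in\Cc{\infty}(\Omega,\R^+)$, the computation of Remark~\ref{1Bus_rk1} — rigorous here since the field $(f(\rho),\rho)$ is bounded and divergence-free on $\Omega^+$ and $\rho$ has a strong trace on $\Gamma$ — gives
\[
-\iint_{\Omega^+}\bigl(\rho\,\p_t\varphi+f(\rho)\,\p_x\varphi\bigr)\d x\d t=\int_0^{+\infty}F_{\dot y(t)}\bigl(\rho(y(t)+,t)\bigr)\varphi(y(t),t)\d t.
\]
Since every couple of $\cG_{\dot y(t)}(q(t))$ has both components with $F_{\dot y(t)}$-value $\le q(t)$ (immediate from the definitions of $\cG^1_s,\cG^2_s,\cG^3_s$ and the Rankine--Hugoniot part of Lemma~\ref{1Bus_germ_dissipativity}), we have $F_{\dot y(t)}(\rho(y(t)+,t))\le q(t)$, and \eqref{1Bus_ci1} follows from $\varphi\ge0$.

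\emph{The entropy inequalities \eqref{1Bus_ei1}.} Fix $\varphi\in\Cc{\infty}(\overline\Omega,\R^+)$ and $\kappa\in[0,1]$. Insert into \eqref{1Bus_ei2} the Lipschitz test function $(1-\xi_n)\varphi$, which vanishes near $\Gamma$ hence is admissible, $\xi_n$ being the cut-off from the proof of Proposition~\ref{1Bus_def12_pp}. Expanding the derivatives, the terms carrying $\nabla\xi_n$ concentrate on the strips $\tfrac1n<|x-y(t)|<\tfrac2n$; using the identity $\Phi(\rho,\kappa)-\dot y(t)|\rho-\kappa|=\Phi_{\dot y(t)}(\rho,\kappa)$, the strong traces, and $y\in\Wloc{1}{\infty}$, one lets $n\to+\infty$ to obtain
\[
\int_0^{+\infty}\!\!\int_\R\bigl(|\rho-\kappa|\p_t\varphi+\Phi(\rho,\kappa)\p_x\varphi\bigr)\d x\d t+\int_\R|\rho_o-\kappa|\varphi(\cdot,0)\d x\ \ge\ \int_0^{+\infty}\!\!\Bigl(\Phi_{\dot y(t)}\bigl(\rho(y(t)-,t),\kappa\bigr)-\Phi_{\dot y(t)}\bigl(\rho(y(t)+,t),\kappa\bigr)\Bigr)\varphi(y(t),t)\d t.
\]
Comparing with \eqref{1Bus_ei1}, it then suffices to prove the pointwise bound $\Phi_s(k_l,\kappa)-\Phi_s(k_r,\kappa)+\cR_s(\kappa,q)\ge0$ for every $(k_l,k_r)\in\cG_s(q)$ and $\kappa\in[0,1]$ (applied with $s=\dot y(t)$, $q=q(t)$ and $(k_l,k_r)$ the pair of traces), then to multiply it by $\varphi(y(t),t)\ge0$ and integrate. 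This pointwise bound is obtained by the same case distinction as in Lemma~\ref{1Bus_germ_dissipativity}: when $F_s(\kappa)\le q$ one has $\cR_s(\kappa,q)=0$ and the bound is exactly \eqref{1Bus_L1D} used with $(k_l,k_r)$ and $(\kappa,\kappa)\in\cG^2_s(q)$; when $F_s(\kappa)>q$ (so $\kappa$ lies strictly between the two roots of $F_s=q$), a direct computation with the bell-shaped $F_s$ shows the left-hand side equals $0$ if $(k_l,k_r)\in\cG^1_s(q)$ and is $\ge0$ if $(k_l,k_r)\in\cG^2_s(q)\cup\cG^3_s(q)$ — for $\cG^3_s(q)$ because $k_l<k_r$ and $F_s(k_l)=F_s(k_r)\le q<F_s(\kappa)$ force $k_l<\kappa<k_r$, whence $\Phi_s(k_l,\kappa)-\Phi_s(k_r,\kappa)=2\bigl(F_s(\kappa)-F_s(k_l)\bigr)\ge0$.

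I expect the main obstacle to be purely technical: carrying out the limit $n\to+\infty$ in the cut-off argument, i.e. checking that the $\nabla\xi_n$-terms genuinely converge to the one-sided trace integrals (this is where $y\in\Wloc{1}{\infty}$ and the existence of strong traces enter), and making the germ case analysis exhaustive so that the defect matches $\cR_{\dot y(t)}(\kappa,q(t))$ on the nose. No new idea beyond those already present in Lemma~\ref{1Bus_germ_dissipativity} and in the proof of Proposition~\ref{1Bus_def12_pp} seems to be needed.
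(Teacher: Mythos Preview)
Your proof is correct and follows essentially the same route as the paper: both use the cut-off $(1-\xi_n)\varphi$ in \eqref{1Bus_ei2} to generate the trace term $\Phi_{\dot y}(\rho_-,\kappa)-\Phi_{\dot y}(\rho_+,\kappa)$ and then reduce \eqref{1Bus_ei1} to the pointwise inequality $\Phi_s(k_l,\kappa)-\Phi_s(k_r,\kappa)+\cR_s(\kappa,q)\ge 0$ for $(k_l,k_r)\in\cG_s(q)$. The only cosmetic difference is the organization of the case analysis --- the paper tabulates by germ part and the position of $\kappa$ relative to $(\rho_l,\rho_r)$, whereas you split on $F_s(\kappa)\le q$ (invoking Lemma~\ref{1Bus_germ_dissipativity} with $(\kappa,\kappa)\in\cG^2_s(q)$) versus $F_s(\kappa)>q$; both yield the same conclusion.
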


\begin{proof}
    Fix $\rho_o \in \L{\infty}(\R, [0, 1])$ and let 
    $\rho \in \L{\infty}(\Omega, [0, 1])$ be a $\cG_{\dot y}(q)$-entropy solution to \eqref{1Bus}. Let $\varphi \in \Cc{\infty}(\overline{\Omega}, \R^+)$ and 
    $\kappa \in [0,1]$. We still denote by $\xi_n$ the cut-off function from the last proof. We write $\ds{\varphi = (1 - \xi_n) \varphi + \xi_n \varphi}$. Since $\phi_n = (1 - \xi_n) \varphi$ vanishes along $\Gamma$, we have
    \[
        \begin{aligned}
            \mathbf{I} 
            & = \int_{0}^{+\infty} \int_{\R} \biggl( |\rho-\kappa| \p_{t}\varphi + \Phi(\rho,\kappa) \p_{x} \varphi \biggr) \d x \d t  
            + \int_{\R}|\rho_o(x)-\kappa|\varphi(x,0) \d x \\[5pt] 
            & + \int_{0}^{+\infty} \cR_{\dot y(t)}(\kappa,q(t)) \varphi(y(t),t) \d t \\[5pt]
            & = \underbrace{\int_{0}^{+\infty} \int_{\R} \biggl( |\rho-\kappa| \p_{t}\phi_n + \Phi(\rho,\kappa) \p_{x} \phi_n \biggr) \d x \d t  
            + \int_{\R}|\rho_o(x)-\kappa|\phi_n(x,0) \d x}_{\geq 0} \\[5pt]
            & + \int_{0}^{+\infty} \int_{\R} \biggl( |\rho-\kappa| \p_{t} (\xi_n \varphi) + \Phi(\rho,\kappa) \p_{x} (\xi_n \varphi) \biggr) \d x \d t  
            + \int_{\R}|\rho_o(x)-\kappa|\xi_n(x,0) \varphi(x,0) \d x \\[5pt] 
            & + \int_{0}^{+\infty} \cR_{\dot y(t)}(\kappa,q(t)) \varphi(y(t),t) \d t \\[5pt]
            & \geq \int_{0}^{+\infty} \int_{\R} |\rho-\kappa| \left(\xi_n \p_{t}\varphi 
            + n \dot y(t) \sgn(x - y(t)) \1_{\left\{\frac{1}{n} < |x - y(t)| < \frac{2}{n} \right\}} \varphi \right) \ \d{x} \d{t} \\[5pt]
            & + \int_{0}^{+\infty} \int_{\R} \Phi(\rho,\kappa) 
            \left( \xi_n \p_{x} \varphi - n \sgn(x-y(t) \1_{\left\{\frac{1}{n} < |x - y(t)| < \frac{2}{n} \right\}} \varphi \right) \ \d x \d t \\[5pt]
            & + \int_{\R}|\rho_o(x)-\kappa| \xi_n(x,0) \varphi(x,0) \d x + \int_{0}^{+\infty} \cR_{\dot y(t)}(\kappa,q(t)) \varphi(y(t),t) \d t.
        \end{aligned}
    \]

    Taking the limit when $n \to +\infty$, we obtain:
    \[
        \mathbf{I} 
        \geq \int_0^{+\infty} \biggl( \underbrace{\Phi_{\dot y(t)} \left(\rho(y(t)-,t), \kappa \right) - \Phi_{\dot y(t)} \left(\rho(y(t)+,t), \kappa \right) 
        + \cR_{\dot y(t)}(\kappa,q(t))}_{\Delta(t,\kappa)} \biggr) \varphi(y(t), t) \d{t}.
    \]

    To conclude, we are going to prove that for a.e. $t > 0$ and for all $\kappa \in [0,1]$, $\Delta(t,\kappa) \geq 0$. Remember that by assumption, for a.e. 
    $t > 0$, $(\rho(y(t)-,t), \rho(y(t)+,t)) \in \cG_{\dot y(t)}(q(t))$. The following table, in which we dropped the $\dot y(t)/q(t)$-indexing, summarizes 
    which values can take the difference $\Delta(t, \kappa)$ according to the position of $\kappa$ with respect to the couple $(\rho(y(t)-,t), \rho(y(t)+,t))$, 
    which is simply denoted by $(\rho_l, \rho_r)$. Note that the case marked by $\mathbf{\times}$ does not happen.

    \begin{center}
        \renewcommand{\arraystretch}{1.5}
        \begin{tabular}{|c||c|c|c|}
            \hline
            \diagbox[width = 10em]{$\kappa$}{$(\rho_l, \rho_r)$}
            & $\in \cG^1$ & $\in \cG^2$ & $\in \cG^3$ \\[5pt]
            \hline
            $\kappa < \min\{\rho_l, \rho_r\}$ & $0$ & $\cR(\kappa,q(t))$ & $0$ \\[5pt]
            \hline
            $\kappa > \max\{\rho_l, \rho_r\}$ & $0$ & $\cR(\kappa,q(t))$ & $0$ \\[5pt]
            \hline
            $\kappa$ between $\rho_l$ and $\rho_r$ & $0$ & $\mathbf{\times}$ & $ 2 (F(\kappa) - F(\rho_l)) + \cR(\kappa,q(t))$ \\[5pt]
            \hline
        \end{tabular}
    \end{center}

    Clearly, $\Delta(t, \kappa) \geq 0$, which proves that $\mathbf{I} \geq 0$, hence $\rho$ satisfies \eqref{1Bus_ei1}. Moreover, by assumption, for a.e. $t > 0$, 
    $(\rho(y(t)-,t), \rho(y(t)+,t)) \in \cG_{\dot y(t)}(q(t))$. This implies, in particular, that $\rho$ satisfies the flux constraint inequality
    $\ds{(f(\rho) - \dot y(t) \rho)_{|x = y(t)} \leq q(t)}$ in the a.e. sense. By Remark \ref{1Bus_rk1}, $\rho$ satisfies \eqref{1Bus_ci1} as well 
    \textit{i.e.} $\rho$ is an admissible entropy solution to \eqref{1Bus}. 
\end{proof}

\subsection{Uniqueness of $\cG$-entropy solutions}

We now prove uniqueness using Definition \ref{1Bus_def2}.

\begin{lemma}[Kato inequality]
    \label{1Bus_Kato1_lmm}
    Fix $\rho_o, \sigma_o \in \L{\infty}(\R, [0,1])$, 
    $y \in \Wloc{1}{\infty}(\mathopen]0, +\infty\mathclose[, \R)$, $q \in \Lloc{\infty}(\mathopen]0, +\infty\mathclose[, \R)$. We denote by $\rho$ a $\cG_{\dot y}(q)$-entropy solution to \eqref{1Bus}. The same way, let $\sigma$ be $\cG_{\dot y}(r)$-entropy solution to Problem \eqref{1Bus} with initial datum $\sigma_o$.
    We suppose that $q, r$ satisfy \eqref{1Bus_assumption_constraint}. Then for all test functions 
    $\varphi \in \Cc{\infty}(\overline{\Omega}, \R^+)$, we have
    \begin{equation}
        \label{1Bus_Kato1}
        \begin{aligned}
            & \int_{0}^{+\infty} \int_{\R} \biggl( |\rho- \sigma| \p_{t}\varphi + \Phi(\rho, \sigma) \p_{x} \varphi \biggr) \d x \d t  
            + \int_{\R} |\rho_o(x) - \sigma_o(x)| \varphi(x,0) \d x \\[5pt]
            & + \int_0^{+\infty} 
            \biggl( \Phi_{\dot y(t)} \left(\rho(y(t)+,t), \sigma(y(t)+,t) \right) - \Phi_{\dot y(t)} \left(\rho(y(t)-,t), \sigma(y(t)-,t) \right) \biggr)
            \varphi(y(t),t) \d{t} \geq 0.
        \end{aligned}
	\end{equation} 
\end{lemma}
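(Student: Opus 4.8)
\emph{Proof plan.} The plan is to combine Kruzhkov's doubling of variables, carried out \emph{away from} the interface $\Gamma$, with the cut-off argument already used in the proof of Proposition~\ref{1Bus_def21_pp}, so that the two boundary terms along $\Gamma$ in~\eqref{1Bus_Kato1} are produced by the strong traces of $\rho$ and $\sigma$. Only the entropy inequalities~\eqref{1Bus_ei2} of Definition~\ref{1Bus_def2} together with the (assumed) existence of strong one-sided traces of $\rho,\sigma$ along $\Gamma$ enter the argument; the germ membership~\eqref{1Bus_ci2} plays no role in deriving~\eqref{1Bus_Kato1} itself, and will only be invoked afterwards through Lemma~\ref{1Bus_germ_dissipativity} to conclude that the interface term in~\eqref{1Bus_Kato1} is nonpositive.

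\emph{Step 1 (Kato inequality away from $\Gamma$).} Both $\rho$ and $\sigma$ satisfy the Kruzhkov entropy inequalities~\eqref{1Bus_ei2}. Doubling the variables $(x,t)$ and $(\tilde x,\tilde t)$ with a test function of the form $\psi\bigl(\tfrac{x+\tilde x}{2},\tfrac{t+\tilde t}{2}\bigr)\delta_\varepsilon(x-\tilde x)\delta_\varepsilon(t-\tilde t)$, where $\psi\in\Cc{\infty}(\overline{\Omega}\setminus\Gamma,\R^+)$ and $\delta_\varepsilon$ is a mollifier: since $\mathrm{supp}\,\psi$ lies at positive distance from $\Gamma$, for $\varepsilon$ small the support of the doubled test function avoids $\Gamma$ as well, so both functions are classical Kruzhkov solutions of $\p_t u+\p_x f(u)=0$ there, and the standard manipulations (as in \cite{Kruzhkov1970,AKR2011,AM2015}) yield, after letting $\varepsilon\to0$,
\[
  \int_0^{+\infty}\!\!\int_{\R}\bigl(|\rho-\sigma|\,\p_t\psi+\Phi(\rho,\sigma)\,\p_x\psi\bigr)\,\d x\,\d t+\int_{\R}|\rho_o(x)-\sigma_o(x)|\,\psi(x,0)\,\d x\ \geq\ 0.
\]
By a standard approximation argument this extends to Lipschitz $\psi$ with compact support in $\overline{\Omega}\setminus\Gamma$.

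\emph{Step 2 (recovering the interface terms).} Let $\varphi\in\Cc{\infty}(\overline{\Omega},\R^+)$ and let $\xi_n$ be the cut-off function from the proof of Proposition~\ref{1Bus_def21_pp}. The function $\phi_n:=(1-\xi_n)\varphi$ is Lipschitz with support in $\{|x-y(t)|\geq 1/n\}$, hence eligible in Step~1. Using $\p_t\xi_n=n\dot y(t)\sgn(x-y(t))\1_{\{1/n<|x-y(t)|<2/n\}}$, $\p_x\xi_n=-n\sgn(x-y(t))\1_{\{1/n<|x-y(t)|<2/n\}}$ and the identity $\Phi_s(a,b)=\Phi(a,b)-s|a-b|$, expanding $\p_t\phi_n,\p_x\phi_n$ turns the inequality of Step~1 applied to $\phi_n$ into
\[
  \int_0^{+\infty}\!\!\int_{\R}(1-\xi_n)\bigl(|\rho-\sigma|\p_t\varphi+\Phi(\rho,\sigma)\p_x\varphi\bigr)+\int_{\R}(1-\xi_n)|\rho_o-\sigma_o|\varphi(\cdot,0)+\int_0^{+\infty}\!\!\int_{\R} n\,\sgn(x-y(t))\,\Phi_{\dot y(t)}(\rho,\sigma)\,\1_{\{1/n<|x-y(t)|<2/n\}}\varphi\ \geq\ 0.
\]

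\emph{Step 3 (passage to the limit).} As $n\to+\infty$, $(1-\xi_n)\to 1$ a.e., so by dominated convergence (all integrands are bounded) the first two terms tend to the first two integrals of~\eqref{1Bus_Kato1}. For the strip term, split the strip into $\{x>y(t)\}$ and $\{x<y(t)\}$ and change variables $x=y(t)+z$: for a.e.\ $t$, continuity of $\Phi_s$ and the strong-trace property of $\rho,\sigma$ along $\Gamma$ give $n\int_{1/n}^{2/n}\Phi_{\dot y(t)}(\rho(y(t)+z,t),\sigma(y(t)+z,t))\,\d z\to\Phi_{\dot y(t)}(\rho(y(t)+,t),\sigma(y(t)+,t))$, and likewise from the left; since $\varphi$ is continuous and everything is bounded, dominated convergence in $t$ shows the strip term converges to $\int_0^{+\infty}\bigl(\Phi_{\dot y(t)}(\rho(y(t)+,t),\sigma(y(t)+,t))-\Phi_{\dot y(t)}(\rho(y(t)-,t),\sigma(y(t)-,t))\bigr)\varphi(y(t),t)\,\d t$, the sign being correct because the $\{x>y(t)\}$ part carries $\sgn=+1$ and yields the right trace, while the $\{x<y(t)\}$ part carries $\sgn=-1$ and yields minus the left trace. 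Adding the three limits gives exactly~\eqref{1Bus_Kato1}.

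\emph{Main obstacle.} The only delicate point is Step~3: making rigorous that the thin-tube averages of $\Phi_{\dot y(t)}(\rho,\sigma)$ converge to the one-sided traces (this is where the strong-trace hypothesis on $\Gamma$ is really used), together with the bookkeeping of signs ensuring that the $x>y(t)$ side produces the ``$+$'' trace and the $x<y(t)$ side the ``$-$'' trace; everything else is a routine adaptation of Kruzhkov's doubling of variables performed on $\Omega\setminus\Gamma$.
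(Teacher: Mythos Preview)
Your proof is correct and follows essentially the same strategy as the paper: Kruzhkov doubling of variables away from $\Gamma$, combined with the same cut-off $\xi_n$ (the paper writes $\gamma_n=1-\xi_n$) so that the strip integrals recover the one-sided trace terms in the limit. The only organizational difference is that the paper inserts the cut-off $\gamma_n(x,t)$ directly into the doubled-variable test function and lets the mollifier scale and the cut-off scale go to zero together, whereas you first pass to the limit in the mollifier to obtain the single-variable Kato inequality on $\overline{\Omega}\setminus\Gamma$ (Step~1) and only then apply the cut-off (Step~2); your two-step ordering is arguably cleaner, since it avoids having to check that the support of the doubled test function stays away from $\Gamma$ in the $(\chi,\tau)$ variables as well.
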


\begin{proof}
    Take $\phi = \phi(x, t, \chi, \tau) \in \Cc{\infty}(\overline{\Omega}^2, \R^+)$ 
    with support contained in the set 
    $\ds{\left(\overline{\Omega} \backslash \Gamma \right)^2}$. The classical method of doubling variables leads us to:
    \begin{equation}
        \label{1Bus_Kato_eq1}
        \begin{aligned}
            & \iiiint |\rho(x,t) - \sigma(\chi, \tau)| (\p_t \phi + \p_\tau \phi) + \Phi(\rho(x,t), \sigma(\chi, \tau)) (\p_x \phi + \p_\chi \phi) \ 
            \d{x} \d{t} \d{\chi} \d{\tau} \\[5pt]
            & + \iiint |\rho_o(x) - \sigma(\chi, \tau)| \phi(x, 0, \chi, \tau) \d{x} \d{\chi} \d{\tau}
            + \iiint |\rho(x, t) - \sigma_o(\chi)| \phi(x, t, \chi, 0) \d{x} \d{t} \d{\chi} \geq 0.
        \end{aligned}
    \end{equation}

    Again, a standard approximation argument allows us to apply \eqref{1Bus_Kato_eq1} with the Lipschitz function
    \[
        \phi_n(x, t, \chi, \tau) 
        = \gamma_n(x,t) \varphi\left( \frac{x + \chi}{2}, \frac{t + \tau}{2} \right) \delta_n \left( \frac{x - \chi}{2} \right) 
        \delta_n \left( \frac{t - \tau}{2} \right),
    \]

    where $\varphi = \varphi(X, T) \in \Cc{\infty}(\overline{\Omega}, \R^+)$, 
    $(\delta_n)_n$ is a smooth approximation of the Dirac mass at the origin, and 
    \[
        \gamma_n(x,t) = 
        \left\{
            \begin{array}{ccc}
                0 & \text{if} & \ds{|x - y(t)| < \frac{1}{n}} \\[5pt]
                \ds{n \left(|x-y(t)| - \frac{1}{n} \right)} & \text{if} & \ds{\frac{1}{n} \leq |x - y(t)| \leq \frac{2}{n}} \\[5pt]
                1 & \text{if} & \ds{|x - y(t)| > \frac{2}{n}}.
            \end{array}
        \right.
    \]

    Using the fact that for a.e. $t > 0$,
    \[
        \begin{aligned}
            \p_t \phi_n + \p_\tau \phi_n
            & = -n \dot y(t) \sgn(x - y(t)) \1_{\left\{\frac{1}{n} < |x - y(t)| < \frac{2}{n} \right\}} 
            \varphi \left( \frac{x + \chi}{2}, \frac{t + \tau}{2} \right) 
            \delta_n \left( \frac{x - \chi}{2} \right) \delta_n \left( \frac{t - \tau}{2} \right) \\[5pt]
            & + \gamma_n(x,t) \p_T \varphi\left( \frac{x + \chi}{2}, \frac{t + \tau}{2} \right) 
            \delta_n \left( \frac{x - \chi}{2} \right) \delta_n \left( \frac{t - \tau}{2} \right) \\[5pt]
            \p_x \phi_n + \p_\chi \phi_n
            & = n \sgn(x - y(t)) \1_{\left\{\frac{1}{n} < |x - y(t)| < \frac{2}{n} \right\}} 
            \varphi\left( \frac{x + \chi}{2}, \frac{t + \tau}{2} \right) 
            \delta_n \left( \frac{x - \chi}{2} \right) \delta_n \left( \frac{t - \tau}{2} \right) \\[5pt]
            & + \gamma_n(x,t) \p_X \varphi\left( \frac{x + \chi}{2}, \frac{t + \tau}{2} \right) 
            \delta_n \left( \frac{x - \chi}{2} \right) \delta_n \left( \frac{t - \tau}{2} \right),
        \end{aligned}    
    \] 

    we obtain:
    \[
        \begin{aligned}
            & \iiiint |\rho(x,t) - \sigma(\chi, \tau)| (\p_t \phi_n + \p_\tau \phi_n) \d{x} \d{t} \d{\chi} \d{\tau} \\[5pt]
            & \limit{n}{+\infty} 
            - \int_0^{+\infty} \dot y(t) \biggl( \left|\rho(y(t)+,t) - \sigma(y(t)+,t) \right| - \left|\rho(y(t)-,t) - \sigma(y(t)-,t) \right| \biggr)
            \varphi(y(t),t) \d{t} \\[5pt]
            & + \int_{0}^{+\infty} \int_{\R} |\rho(x,t) - \sigma(x, t)| \p_T \varphi(x,t) \d{x} \d{t},
        \end{aligned}
    \]

    and
    \[
        \begin{aligned}
            & \iiiint \Phi(\rho(x,t), \sigma(\chi, \tau)) (\p_x \phi_n + \p_\chi \phi_n) \d{x} \d{t} \d{\chi} \d{\tau} \\[5pt]
            & \limit{n}{+\infty} 
            \int_0^{+\infty} \biggl( \Phi(y(t)+,t), \sigma(y(t)+,t) - \Phi(\rho(y(t)-,t), \sigma(y(t)-,t)) \biggr)
            \varphi(y(t),t) \d{t} \\[5pt]
            & + \int_{0}^{+\infty} \int_{\R} \Phi(\rho(x,t), \sigma(x, t)) \p_X \varphi(x,t) \d{x} \d{t}.
        \end{aligned}
    \]

    Finally, since
    \[
        \begin{aligned}
            & \iiint |\rho_o(x) - \sigma(\chi, \tau)| \phi_n(x, 0, \chi, \tau) \d{x} \d{\chi} \d{\tau} \;\; \text{and} \;\; 
            \iiint |\rho(x,t) - \sigma_o(\chi)| \phi_n(x, t, \chi, 0) \d{x} \d{\chi} \d{t} \\[5pt]
            & \text{both converge to} \; \frac{1}{2} \int_{\R} |\rho_o(x) - \sigma_o(x)| \varphi(x,0) \d{x},
        \end{aligned}
    \]

    we get \eqref{1Bus_Kato1} by assembling the above ingredients together.
\end{proof}

\begin{theorem}
    \label{1Bus_stability_th1}
    Fix $\rho_o, \sigma_o \in \L{\infty}(\R, [0,1])$, 
    $y \in \Wloc{1}{\infty}(\mathopen]0, +\infty\mathclose[, \R)$, $q \in \Lloc{\infty}(\mathopen]0, +\infty\mathclose[, \R)$. We denote by $\rho$ a $\cG_{\dot y}(q)$-entropy solution to \eqref{1Bus}. The same way, let $\sigma$ be $\cG_{\dot y}(r)$-entropy solution to Problem \eqref{1Bus} with initial datum $\sigma_o$.
    We suppose that $q, r$ satisfy \eqref{1Bus_assumption_constraint}. Then for all $T > 0$, we have 
    \begin{equation}
        \label{1Bus_stability1}
        \|\rho(T) - \sigma(T)\|_{\L{1}(\R)} \leq \|\rho_o - \sigma_o\|_{\L{1}(\R)} 
        + 2 \int_0^{T} |q(t) - r(t)| \d{t}.
    \end{equation}

    In particular, Problem \eqref{1Bus} admits at most one solution.
\end{theorem}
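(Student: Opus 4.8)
\subsection*{Proof plan (sketch)}

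The plan is to deduce~\eqref{1Bus_stability1} from the Kato inequality of Lemma~\ref{1Bus_Kato1_lmm}, the localization (via Kruzhkov's truncated cone) serving only to pass from the weak formulation to values at the fixed time $T$ and to discard the spatial truncation, the heart of the matter being the control of the interface term. Fix $T > 0$. First I would replace $\rho$ and $\sigma$ by their representatives that are continuous in time with values in $\Lloc{1}(\R)$; these exist by standard results, since $\rho$ and $\sigma$, being $\cG_{\dot y}$-entropy solutions, are in particular distributional solutions of $\p_t \rho + \p_x f(\rho) = 0$ on $\Omega$ (see Remark~\ref{1Bus_rk1}).

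Set $L := \|f'\|_{\L{\infty}([0,1])}$ and, for $R > 0$, introduce the truncated cone $\cC_{R,T} := \{(x,t) \; : \; 0 < t < T, \; |x| < R + L(T-t)\}$. I would plug into~\eqref{1Bus_Kato1} test functions approximating $\1_{\cC_{R,T}}$ and pass to the limit: the term $\int_\R |\rho_o - \sigma_o| \varphi(\cdot, 0)$ contributes $\int_{|x| < R + LT} |\rho_o - \sigma_o|$; the space-time integral concentrates on the top face $\{t = T\}$, yielding $-\int_{|x| < R} |\rho(T) - \sigma(T)|$, and on the two lateral faces, where its density has the sign of $-(L|\rho - \sigma| \pm \Phi(\rho, \sigma)) \leq -(L - \|f'\|_{\L{\infty}([0,1])})|\rho - \sigma| \leq 0$ --- the classical Kruzhkov cone computation, valid because $L \geq \|f'\|_{\L{\infty}([0,1])}$ --- so those lateral contributions may simply be dropped; and the interface term is bounded above by $\int_0^T \left( \Phi_{\dot y(t)}(\rho(y(t)+,t), \sigma(y(t)+,t)) - \Phi_{\dot y(t)}(\rho(y(t)-,t), \sigma(y(t)-,t)) \right)^{+} \d t$. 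After rearranging and letting $R \to +\infty$ by monotone convergence, \eqref{1Bus_stability1} follows once the pointwise bound
\[
    \Phi_{\dot y(t)}(\rho(y(t)+,t), \sigma(y(t)+,t)) - \Phi_{\dot y(t)}(\rho(y(t)-,t), \sigma(y(t)-,t)) \leq 2 |q(t) - r(t)| \quad \text{for a.e.} \; t > 0
\]
is established.

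To prove this, fix such a $t$, abbreviate $s := \dot y(t)$, $q := q(t)$, $r := r(t)$, $(a,b) := (\rho(y(t)-,t), \rho(y(t)+,t))$, $(a',b') := (\sigma(y(t)-,t), \sigma(y(t)+,t))$; by Definition~\ref{1Bus_def2}(ii) one has $(a,b) \in \cG_s(q)$ and $(a',b') \in \cG_s(r)$, both germs being well defined by~\eqref{1Bus_assumption_constraint}. Since $\Phi_s$ is symmetric and the inequality to prove is symmetric under $(\rho, q) \leftrightarrow (\sigma, r)$, I may assume $q \leq r$. Because $\cG^2_s(q) \cup \cG^3_s(q) \subset \cG^2_s(r) \cup \cG^3_s(r) \subset \cG_s(r)$, two cases remain. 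If $(a,b) \in \cG_s(r)$, then $(a,b)$ and $(a',b')$ both belong to the single germ $\cG_s(r)$, and Lemma~\ref{1Bus_germ_dissipativity} yields $\Phi_s(b,b') - \Phi_s(a,a') \leq 0 \leq 2|q - r|$. Otherwise $(a,b) = (\widehat{\rho}_s(q), \widecheck{\rho}_s(q))$, and I would conclude by a direct inspection in the spirit of the proof of Lemma~\ref{1Bus_germ_dissipativity}, using that $\widecheck{\rho}_s(q) < \overline{\rho}_s < \widehat{\rho}_s(q)$ (with $\overline{\rho}_s := \text{argmax}(F_s)$), that $q \mapsto \widehat{\rho}_s(q)$ is nonincreasing and $q \mapsto \widecheck{\rho}_s(q)$ nondecreasing, and the Rankine-Hugoniot identities $F_s(a) = F_s(b) = q$, $F_s(a') = F_s(b')$: one finds $\Phi_s(b,b') - \Phi_s(a,a') = 2(r - q)$ when $(a',b') = (\widehat{\rho}_s(r), \widecheck{\rho}_s(r))$, and $\Phi_s(b,b') - \Phi_s(a,a') \leq 2 \max\{F_s(a') - q, 0\} \leq 2(r - q)$ when $a' \leq b'$, in which case $F_s(a') = F_s(b') \leq r$. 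In every case the pointwise bound holds.

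Finally, uniqueness: applying~\eqref{1Bus_stability1} with $\sigma_o = \rho_o$ and $r = q$ forces $\rho(T) = \sigma(T)$ in $\L{1}(\R)$ for every $T > 0$, hence $\rho = \sigma$ a.e. on $\Omega$. I expect the genuine difficulty to be the pointwise interface bound in the mixed configuration --- one trace couple being a distinguished connection $(\widehat{\rho}_s(\cdot), \widecheck{\rho}_s(\cdot))$ and the other a classical Kruzhkov connection at a different constraint level --- where the factor $2$ in $2|q - r|$ is precisely what is needed, and is sharp (it is attained when both trace couples are distinguished connections). The cone/localization part follows the classical Kruzhkov scheme and poses no real difficulty.
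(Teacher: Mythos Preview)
Your proposal is correct and follows the same overall route as the paper: apply the Kato inequality of Lemma~\ref{1Bus_Kato1_lmm} with test functions approximating the indicator of a truncated cone, drop the lateral contributions by the standard Kruzhkov computation, let $R\to+\infty$, and reduce everything to the pointwise interface bound $\Phi_{\dot y(t)}(\rho_r,\sigma_r)-\Phi_{\dot y(t)}(\rho_l,\sigma_l)\le 2|q(t)-r(t)|$.

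The one noteworthy difference is in how you establish that pointwise bound. The paper proceeds by a full $3\times 3$ table according to which parts $\cG^1,\cG^2,\cG^3$ of their respective germs the trace couples belong to. You instead reduce by symmetry to $q\le r$, observe the inclusion $\cG^2_s(q)\cup\cG^3_s(q)\subset\cG_s(r)$, and then invoke the $\L1$-dissipativity of $\cG_s(r)$ (Lemma~\ref{1Bus_germ_dissipativity}) to dispose of all cases except $(a,b)=(\widehat\rho_s(q),\widecheck\rho_s(q))$, which you treat directly. This is a slightly more conceptual organization that reuses the dissipativity lemma rather than redoing the case analysis, but it is not a genuinely different argument --- the underlying computations coincide, and your sharp case $2(r-q)$ is exactly the paper's entry for $\cG^1_{\dot y}(q)\times\cG^1_{\dot y}(r)$.
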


\begin{proof}
    Fix $T > 0$, $R \geq \|y\|_{\L{\infty}(\mathopen]0, T\mathclose[)}$ and set 
    $L := \|f'\|_{\L{\infty}} + \|\dot y\|_{\L{\infty}(\mathopen]0, T\mathclose[)}$. Consider for all $n \in \N^*$ the function: 
    \[
        \varphi_n(x,t) := \frac{1}{4} \left(1 - \xi_n(t - T) \right) \left(1 - \xi_n\left(|x| - R + L(t - T) \right) \right),
    \]

    where $(\xi_n)_n$ is a smooth approximation of the sign function. The sequence $(\varphi_n)_n$ is a smooth approximation of the characteristic function of the 
    trapezoid
    \[
        \mathcal{T} := \left\{ (x, t) \in \overline{\Omega} \; : \; t \in [0, T] \; \text{and} \; |x| \leq R - L(t - T) \right\} 
        \supset \left\{ (y(t), t) \; : \; t \in [0, T] \right\}.    
    \]
    
    Let us apply Kato inequality \eqref{1Bus_Kato1} with $(\varphi_n)_n$. For all $n \in \N$, we have
    \[
        \begin{aligned}
            \int_{0}^{+\infty} \int_{\R} |\rho- \sigma| \p_{t}\varphi_n \d{x} \d{t}
            & = - \frac{1}{4} \int_{0}^{+\infty} \int_{\R} |\rho- \sigma| \xi_n' (t - T) 
            \left(1 - \xi_n\left(|x| - R + L(t - T) \right) \right) \d{x} \d{t} \\[5pt]
            & - \frac{L}{4} \int_{0}^{+\infty} \int_{\R} |\rho- \sigma| \left(1 - \xi_n(t - T) \right) 
            \xi_n' \left(|x| - R + L(t - T) \right) \d{x} \d{t} \\[5pt]
            & \limit{n}{+\infty} 
            - \int_{|x| \leq R} |\rho(x,T) -  \sigma(x, T)| \d{x} - L \int_{0}^{T} \int_{|x| = R - L(t-T)} |\rho- \sigma| \d{x} \d{t}.
        \end{aligned}
    \]

    Then,
    \[
        \begin{aligned}
            \int_{0}^{+\infty} \int_{\R} \Phi(\rho, \sigma) \p_{x}\varphi_n \d{x} \d{t} 
            & = - \frac{1}{4} \int_{0}^{+\infty} \int_{\R} \Phi(\rho, \sigma) \left(1 - \xi_n(t - T) \right) 
            \sgn(x) \xi_n' \left(|x| - R + L(t - T) \right) \d{x} \d{t} \\[5pt]
            & \limit{n}{+\infty} - \int_{0}^{T} \int_{|x| = R - L(t-T)} \Phi(\rho, \sigma) \sgn(x) \d{x} \d{t}.
        \end{aligned}
    \]

    Finally, we have
    \[
        \int_{\R} |\rho_o(x) - \sigma_o(x)| \varphi_n(x,0) \d{x}
        \limit{n}{+\infty}
        \int_{|x| \leq R + L T} |\rho_o(x) - \sigma_o(x)| \d{x}
    \]

    Remark also that the choices of $R$ and $L$ imply that for all $t > 0$,
    \[
        \varphi_n(y(t), t) \limit{n}{+\infty} 1.
    \]

    Assembling the previous limits together, we get:
    \[
        \begin{aligned}
            & - \int_{|x| \leq R} |\rho(x,T) -  \sigma(x, T)| \d{x} + \int_{|x| \leq R + L T} |\rho_o(x) - \sigma_o(x)| \d{x} \\[5pt]
            & - \int_{0}^{T} \int_{|x| = R - L(t-T)} \left( L |\rho- \sigma| + \Phi(\rho, \sigma) \sgn(x) \right) \d{x} \d{t} \\[5pt]
            & + \int_0^{T} \biggl( \Phi_{\dot y(t)} \left(\rho(y(t)+,t), \sigma(y(t)+,t) \right) - 
            \Phi_{\dot y(t)} \left(\rho(y(t)-,t), \sigma(y(t)-,t) \right) \biggr) \d{t} \geq 0.
        \end{aligned}
    \]

    Note that for all $\rho, \sigma \in [0, 1]$ and for all $x \in \R$,
    \[
        L |\rho - \sigma| + \Phi(\rho, \sigma) \sgn(x)  
        \geq L |\rho - \sigma| - |f(\rho) - f(\sigma)| 
        \geq (L - \|f'\|_{\L{\infty}}) |\rho - \sigma| \geq 0.
    \]

    Consequently, we have shown that
    \[
        \begin{aligned}
            \int_{|x| \leq R} |\rho(x,T) -  \sigma(x, T)| \d{x} 
            & \leq \int_{|x| \leq R + L T} |\rho_o(x) - \sigma_o(x)| \d{x} \\[5pt]
            & + \int_0^{T} \biggl( \underbrace{\Phi_{\dot y(t)} \left(\rho(y(t)+,t), \sigma(y(t)+,t) \right) - 
            \Phi_{\dot y(t)} \left(\rho(y(t)-,t), \sigma(y(t)-,t) \right)}_{\Delta(t)} \biggr) \d{t}.
        \end{aligned}
    \]

    What is left to do is to take the limit when $R \to +\infty$ and to estimate the last two terms of the right-hand side of the previous inequality. 
    The following table, in which we dropped the $t$-indexing, summarizes which values can take the difference $\Delta(t)$ according to which parts of 
    their respective germs the couples $(\rho(y(t)-,t), \rho(y(t)+,t))$ and $(\sigma(y(t)-,t), \sigma(y(t)+,t))$, respectively denoted by $(\rho_l, \rho_r)$ and 
    $(\sigma_l, \sigma_r)$ belong to.

    \begin{center}
        \renewcommand{\arraystretch}{1.5}
        \begin{tabular}{|c||c|c|c|}\hline
            \diagbox[width = 10em]{$(\sigma_l, \sigma_r)$}{$(\rho_l, \rho_r)$}
            & $\in \cG^1_{\dot y}(q)$ & $\in \cG^2_{\dot y}(q)$ & $\in \cG^3_{\dot y}(q)$ \\[5pt]
            \hline
            $\in \cG^1_{\dot y}(r)$ & $2 (q-r)$ & $0$ or $2(F_{\dot y}(\rho_l) - r)$ & $2 (F_{\dot y}(\rho_l) - r)$ \\[5pt]
            \hline
            $\in \cG^2_{\dot y}(r)$ & $0$ & $0$ & $ \leq 0$ \\[5pt]
            \hline
            $\in \cG^3_{\dot y}(r)$ & $2(F_{\dot y}(\sigma_l) - q)$ & $\leq 0$ & $\leq 0$ \\[5pt]
            \hline
        \end{tabular}
    \end{center}

    We clearly see the bound $\ds{\Delta(t) \leq 2|q(t) - r(t)|}$, which leads us to \eqref{1Bus_stability1}, which clearly implies uniqueness. This 
    concludes the proof.
\end{proof}

\section{Existence for the single trajectory problem}
\label{1Bus_Section_existence}

We build a simple finite volume scheme and prove its convergence to an admissible entropy solution to \eqref{1Bus}. From now on, we denote by
\[
	a \vee b := \max\{a,b\} \quad \text{and} \quad a \wedge b := \min\{a,b\}.
\]

Fix $\rho_o \in \L{\infty}(\R, [0,1])$ and $y \in \Wloc{1}{\infty}(\mathopen]0, +\infty\mathclose[, \R)$.

\subsection{Adapted mesh and definition of the scheme}
\label{1Bus_Section_mesh_scheme}

We start by defining the sequence of approximate slopes: 
\[
    \forall n \in \N, \; s^{n} = \frac{1}{\Delta t} \int_{t^n}^{t^{n+1}} \dot y(t) \d{t}; \quad 
    \forall t \geq 0, \; s_\Delta(t) = \sum_{n \in \N} s^n \1_{[t^n, t^{n+1} \mathclose[}(t),
\]

and the sequence of approximate trajectories:
\[
    \forall t \geq 0, \; y_\Delta(t) = y(0) 
    + \int_0^t s_{\Delta} (\tau) \d{\tau}; \quad 
    \forall n \in \N, \; y^n = y_\Delta(t^n).
\]

Since $(s_\Delta)_\Delta$ converges $\dot y$ in $\Lloc{1}(\mathopen]0, +\infty\mathclose[, \R)$, $(y_\Delta)_\Delta$ converges to $y$ in $\Lloc{\infty}(\mathopen]0, +\infty\mathclose[, \R)$.

The same way, we define $(q_\Delta)_\Delta$, the sequence of approximate constraints:
\[
    q_\Delta(t) = \sum_{n \in \N} q^n \1_{[t^n, t^{n+1} \mathclose[}(t); \quad q^n = \frac{1}{\Delta t} \int_{t^n}^{t^{n+1}} q(t) \d{t},
\]

which converges to $q$ in $\Lloc{1}(\mathopen]0, +\infty\mathclose[, \R)$.

\begin{remark}
    With our choices, from \eqref{1Bus_assumption_constraint2}, we deduce that
    \begin{equation}
        \label{1Bus_assumption_constraint3}
        \forall n \in \N, \quad s^n + q^n = \frac{1}{\Delta t} \int_{t^n}^{t^{n+1}} \left( \dot y(t) + q(t) \right) \d{t} \geq 0.   
    \end{equation}

    This fact will come in handy in the proof of stability for the scheme.
\end{remark}

Fix now $T > 0$ and a spatial mesh size $\Delta x > 0$ with $\lambda = \Delta t / \Delta x$ fixed, verifying the CFL condition
\begin{equation}
	\label{1Bus_cfl}
    2 \left(\underbrace{\|f'\|_{\L{\infty}} + \|\dot y \|_{\L{\infty}(\mathopen]0, T\mathclose[)}}_{:= L} \right) \lambda \leq 1.
\end{equation}

For all $n \in \N$, there exists a unique index $j_n \in \Z$ such that 
$y^n \in \mathopen]x_{j_n}, x_{j_n + 1}\mathclose[$, see Figure \ref{fig2}. Introduce 
the sequence $(\chi_j^n)_{j \in \Z}$ defined by 
\[
    \chi_j^n = 
    \left\{
        \begin{array}{ccl}
            x_j & \text{if} & j \leq j_n - 1 \\[5pt]
            y^n & \text{if} & j = j_n \\[5pt]
            x_{j+1} & \text{if} & j \geq j_n + 1.
        \end{array}
    \right.
\]

We define the cell grids:
\[
    \overline{\Omega} = \bigcup_{n \in \N} \bigcup_{j \in \Z} \cP_{j+1/2}^n,
\]

where for all $n \in \N$ and $j \in \Z$, $\cP_{j+1/2}^n$ is the rectangle 
$\mathopen]\chi_j^n, \chi_{j+1}^n\mathclose[ \times [t^n, t^{n+1} \mathclose[$ if $j \leq j_{n}-2$, one of the 
parallelograms represented in Figure \ref{fig2} if $j \in \{j_n - 1, j_n\}$ and the rectangle $\mathopen]\chi_{j+1}^n, \chi_{j+2}^n\mathclose[ \times [t^n, t^{n+1} \mathclose[$ if $j \geq j_{n} + 1$. 

\begin{figure}[!htp]
	\begin{center}
		\includegraphics[scale = 0.70]{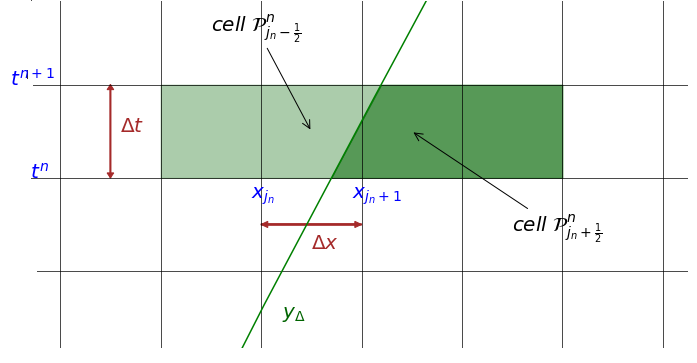}
	\end{center}
    \vspace*{-5mm}
	\caption{Illustration of the modification to the mesh.}
	\label{fig2}
\end{figure}

We start by discretizing the initial datum $\rho_o$ with $\left( \rho_{j+1/2}^{0} \right)_{j}$ where for all $j \in \Z$, $\ds{\rho_{j+1/2}^{0}}$ is its mean 
value on the cell $\mathopen]\chi_j^0, \chi_{j+1}^0 \mathclose[$. Clearly, for this choice, we have:
\[
    \rho_{j+1/2}^0 \in [0, 1]
    \quad \text{and} \quad 
    \rho_\Delta^0 = \sum_{j \in \Z} \rho_{j+1/2}^0 
    \1_{\mathopen]\chi_j^0, \chi_{j+1}^0 \mathclose[} \limit{\Delta x}{0} \rho_o \; \text{in} \; \Lloc{1}(\R).
\]

Let us denote by $\eo = \eo(a, b)$ the Engquist-Osher numerical flux associated with $f$ and for all $s \in \R$, $\god^s = \god^s(u, v)$ be the Godunov flux 
associated with $\rho \mapsto f(\rho) - s \rho$.

Fix $n \in \N$. To simplify the reading, we introduce the notations:
\begin{equation}
    \label{1Bus_numerical_fluxes}
    \forall j \in \Z, \quad f_j^n := \eo\left( \rho_{j - 1/2}^{n}, \rho_{j + 1/2}^{n} \right) \quad \text{and} \quad 
    f_{int}^n := \god^{s^n} \left( \rho_{j_n - 1/2}^{n}, \rho_{j_n + 1/2}^{n} \right) \wedge q^n.
\end{equation}

We now proceed to the definition of the scheme. It comes from a discretization of the conservation law written in each volume control 
$\cP_{j+1/2}^n$ ($n \in \N$, $j \in \Z$). Away from the trajectory/constraint, it is the standard $3$-point marching formula and when $j \in \{j_n-1, j_n\}$, we 
have to deal with both the constraint and the interface which is not vertical. Three cases have to be considered when describing the marching formula of the 
scheme, but we really give the details for only one of them.

\textbf{Case 1:} $j_{n+1} = j_n + 1$. This means that the line joining $(y^n, t^n)$ and $(y^{n+1}, t^{n+1})$ 
crosses the line $x = x_{j_{n} + 1}$, see Figure \ref{fig2}. If $j \notin \{j_n - 1, j_n \}$, the conservation written in the rectangle 
$\cP_{j+1/2}^n$ is given by the standard equation:
\begin{equation}
    \label{1Bus_mf1}
    \left( \rho_{j+1/2}^{n+1} - \rho_{j+1/2}^{n} \right) \Delta x + (f_{j+1}^n - f_j^n) \Delta t = 0.
\end{equation}

From the conservation in the cell $\cP_{j_n - 1/2}^n$, we set:
\begin{equation}
    \label{1Bus_mf2}
    \rho_{j_{n+1} - 1/2}^{n+1} \left( y^{n+1} - \chi_{j_{n+1}-2}^{n+1} \right) 
    - \rho_{j_n - 1/2}^{n} \left( y^{n} - \chi_{j_{n}-1}^{n} \right) + (f_{int}^n - f_{j_{n} - 1}^n) \Delta t = 0.
\end{equation}

This formula corresponds to the choice of putting the same value for $\rho_\Delta$ on 
$\mathopen]\chi_{j_{n+1}-2}^{n+1}, \chi_{j_{n+1}-1}^{n+1}\mathclose[$ and on 
$\mathopen]\chi_{j_{n+1}-1}^{n+1}, y^{n+1}\mathclose[$ at time $t=t^{n+1}$, \textit{i.e.} $\rho_{j_{n+1} - 3/2}^{n+1} = \rho_{j_{n+1} - 1/2}^{n+1}$. In the cell $\cP_{j_n + 1/2}^n$, the 
conservation takes the form: 

\begin{equation}
    \label{1Bus_mf3}
    \rho_{j_{n+1} + 1/2}^{n+1} \left(\chi_{j_{n+1} + 1}^{n+1} - y^{n+1} \right) 
    - \rho_{j_n + 1/2}^{n} \left( \chi_{j_{n} + 1}^{n} - y^n \right) - \rho_{j_n + 3/2}^{n} \Delta x + (f_{j_n + 2 }^n - f_{int}^n) \Delta t = 0.
\end{equation}

Let us introduce the two functions 
\[
    H_{j_n - 1}^n (u, v, w) 
    := \frac{v (y^n - \chi_{j_n - 1}^{n}) - \left( \god^{s^n}(v, w) \wedge q^n - \eo(u, v)\right) \Delta t}{y^{n+1} - \chi_{j_{n+1} - 2}^{n+1}}  
\]

and 
\[
    H_{j_n}^n (u, v, w, z) 
    := \frac{v(\chi_{j_n + 1}^n - y^n) + w \Delta x - \left(\eo(w, z) - \god^{s^n}(u, v) \wedge q^n \right)\Delta t }{\chi_{j_{n+1} + 1}^{n+1} - y^{n+1}},  
\]

so that 
\begin{equation}
    \label{1Bus_mf4}
    \left\{
        \begin{aligned}
            \rho_{j_{n+1}- 1/2}^{n+1} & = H_{j_n - 1}^n (\rho_{j_{n}- 3/2}^{n}, \rho_{j_{n} - 1/2}^{n}, \rho_{j_{n} + 1/2}^{n}) \\[5pt]
            \rho_{j_{n+1} + 1/2}^{n+1} & = H_{j_n}^n (\rho_{j_{n}- 1/2}^{n}, \rho_{j_{n} + 1/2}^{n}, \rho_{j_{n} + 3/2}^{n}, \rho_{j_{n} + 5/2}^{n}).
        \end{aligned}
    \right.
\end{equation}

The key point in the proofs of the next section (stability and discrete entropy inequalities) is that the functions $H_{j_n-1}^n$ and $H_{j_n}^n$ are 
nondecreasing with respect to their arguments, therefore the modification in \eqref{1Bus_numerical_fluxes} did not affect the monotonicity of the 
resulting scheme \eqref{1Bus_mf1}~--~\eqref{1Bus_mf3}.

Finally, the approximate solution $\rho_\Delta$ is defined almost everywhere on $\overline{\Omega}$:
\[
	\rho_{\Delta} = \sum_{n \in \N} \left( 
        \sum_{j \leq j_n} \rho_{j+1/2}^{n} \1_{\cP_{j+1/2}^{n}} + \sum_{j \geq j_n + 1} \rho_{j+3/2}^{n} \1_{\cP_{j+1/2}^{n}} \right).
\]

The other cases ($j_{n+1} = j_n$ or $j_{n+1} = j_{n} - 1$) follow from similar geometric considerations. Note that in the context of traffic dynamics, $y$ 
would be the trajectory of a stationary or a forward moving obstacle and therefore, we should have $\dot y \geq 0$. This implies that for all $n \in \N$, 
either $j_{n+1} = j_n$ or $j_{n+1} = j_n + 1$. This is why we will focus on the case presented in Figure \ref{fig2}.

\subsection{Stability and discrete entropy inequalities}

\begin{proposition}[$\L{\infty}$ stability]
    \label{1Bus_scheme_stability_pp}
	Under the CFL condition \eqref{1Bus_cfl}, the scheme \eqref{1Bus_mf1}~--~\eqref{1Bus_mf3} is stable: 
	\begin{equation}
		\label{1Bus_scheme_stability}
		\forall n \in \N, \; \forall j \in \Z, \quad \rho_{j+1/2}^{n} \in [0,1].
	\end{equation}
\end{proposition}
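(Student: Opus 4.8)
The plan is to prove by induction on $n$ that $\rho_{j+1/2}^n\in[0,1]$ for every $j\in\Z$, the case $n=0$ being the discretization of $\rho_o$ which takes values in $[0,1]$ by construction. For the inductive step, I would split the marching formula into its two regimes: the "standard" cells $j\notin\{j_n-1,j_n\}$, governed by \eqref{1Bus_mf1}, and the two "modified" cells near the interface, governed by \eqref{1Bus_mf4}. In either regime the strategy is the same: exhibit the update as a nondecreasing function of the relevant neighbouring values $\rho_{\bullet}^n$ (all in $[0,1]$ by the inductive hypothesis), and then check that this function maps the constant states $0$ and $1$ into $[0,1]$; monotonicity then forces $\rho_{j+1/2}^{n+1}\in[0,1]$.

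For the standard cells, \eqref{1Bus_mf1} reads $\rho_{j+1/2}^{n+1}=\rho_{j+1/2}^n-\lambda\big(\eo(\rho_{j+1/2}^n,\rho_{j+3/2}^n)-\eo(\rho_{j-1/2}^n,\rho_{j+1/2}^n)\big)$, and this is the textbook Engquist--Osher scheme: monotonicity of the update in all three arguments follows from the CFL condition \eqref{1Bus_cfl} (since $\lambda\|f'\|_{\L\infty}\le\tfrac12$, a fortiori $\le1$) together with the facts that $\eo$ is nondecreasing in its first argument and nonincreasing in its second, and $\eo(\kappa,\kappa)=f(\kappa)$ for any constant state. Feeding in $\rho\equiv0$ gives $\rho_{j+1/2}^{n+1}=0$ and $\rho\equiv1$ gives $\rho_{j+1/2}^{n+1}=1$, so $[0,1]$ is invariant.

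For the two modified cells I would use the functions $H_{j_n-1}^n$ and $H_{j_n}^n$ introduced before \eqref{1Bus_mf4}, for which the paper already asserts the crucial monotonicity: $H_{j_n-1}^n$ and $H_{j_n}^n$ are nondecreasing in each of their arguments. This is exactly where the CFL condition and the geometry of the modified mesh enter: one must check that the coefficient $v(y^n-\chi_{j_n-1}^n)-\lambda\Delta x\,\partial_v(\eo-\god)$ stays nonnegative, using that the parallelogram widths $y^n-\chi_{j_n-1}^n$ and $\chi_{j_{n+1}-2}^{n+1}$-type quantities are comparable to $\Delta x$ up to the $L\Delta t$ correction controlled by \eqref{1Bus_cfl}, and that $\god^{s^n}$ is nondecreasing in its first argument, nonincreasing in its second, with $\god^{s^n}\wedge q^n$ inheriting these; the $\wedge q^n$ truncation preserves monotonicity. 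Granting this, invariance of $[0,1]$ is obtained by evaluating at the constant states: for $\rho\equiv\kappa\in\{0,1\}$ one has $\eo(\kappa,\kappa)=f(\kappa)$ and $\god^{s^n}(\kappa,\kappa)=F_{s^n}(\kappa)=f(\kappa)-s^n\kappa$, so (using $f(0)=f(1)=0$) the numerators of $H_{j_n-1}^n$ and $H_{j_n}^n$ collapse; here the inequality \eqref{1Bus_assumption_constraint3}, i.e. $s^n+q^n\ge0$, is what guarantees that the truncated flux $\god^{s^n}(\kappa,\kappa)\wedge q^n$ behaves correctly at $\kappa=0$ (one needs $-s^n\le q^n$ so that $f_{int}^n\wedge q^n$ does not "over-deplete" the interface cell), yielding $H^n=\kappa$ as required.

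The main obstacle is the verification of the monotonicity of $H_{j_n-1}^n$ and $H_{j_n}^n$ — this is the technical heart of the stability argument and is where one genuinely needs both the CFL bound \eqref{1Bus_cfl} (to keep the "leftover area" coefficients nonnegative after subtracting the flux differences) and the sign condition \eqref{1Bus_assumption_constraint3} (to control the interplay between the Godunov flux, the slope $s^n$, and the constraint truncation $\wedge q^n$). Once monotonicity is in hand, the $[0,1]$-invariance is the routine plug-in-the-constant-states check sketched above, and the induction closes.
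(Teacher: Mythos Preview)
Your plan is essentially identical to the paper's proof: induction on $n$, monotonicity of the update functions (standard for \eqref{1Bus_mf1}, verified for $H_{j_n-1}^n$, $H_{j_n}^n$ via the CFL condition \eqref{1Bus_cfl} alone), and invariance of $[0,1]$ by plugging in the constant states $0$ and $1$. One small correction in your attribution: the sign condition \eqref{1Bus_assumption_constraint3} ($s^n+q^n\ge0$) is needed at $\kappa=1$, not $\kappa=0$ --- at $\kappa=1$ one has $\god^{s^n}(1,1)=F_{s^n}(1)=-s^n$ and must ensure $(-s^n)\wedge q^n=-s^n$, i.e.\ $q^n\ge-s^n$, whereas at $\kappa=0$ one has $\god^{s^n}(0,0)=0$ and only $q^n\ge0$ is needed; also, \eqref{1Bus_assumption_constraint3} plays no role in the monotonicity verification, which relies solely on CFL and the monotonicity of $\eo$ and $\god^{s^n}$.
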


\begin{proof}
    \textit{Monotonicity.} Fix $n \in \N$. Clearly, the expression \eqref{1Bus_mf1} allows us to express $\rho^{n+1}$ as a function of three 
    values of $\rho^n$ in a nondrecreasing way, see the \cite[Chapter 5]{EGHBook} for instance. We now verify that the functions $H_{j_n - 1}^n$ and 
    $H_{j_n}^n$ are also nondecreasing. Let us detail the proof for $H_{j_n}^n$. Recall that $H_{j_n}^n$ is Lipschitz continuous by construction, therefore 
    we can study its monotonicity in terms of its a.e. derivatives. Making use of both the CFL condition \eqref{1Bus_cfl} and of the monotonicity of $\eo$ 
    and $\god^{s^n}$, for a.e. $u, v, w, z \in [0, 1]$, we have
    \[
        \begin{aligned}
            \p_1 H_{j_n}^n(u, v, w, z)
            & = \frac{1}{2} \frac{\Delta t}{\chi_{j_{n+1} + 1}^{n+1} - y^{n+1}} \frac{\p \god^{s^n}}{\p a}(u, v) 
            (1 - \sgn(\god^{s^n}(u, v) - q^n)) \geq 0, \\[5pt]
            \p_2 H_{j_n}^n(u, v, w, z) 
            & = \frac{\chi_{j_n + 1}^{n} - y^n}{\chi_{j_{n+1} + 1}^{n+1} - y^{n+1}}
            + \frac{\Delta t}{\chi_{j_{n+1} + 1}^{n+1} - y^{n+1}} \frac{\p \god^{s^n}}{\p b}(u, v) 
            \frac{(1 - \sgn(\god^{s^n}(u, v) - q^n))}{2} \\[5pt]
            & \geq \frac{\chi_{j_n + 1}^{n} - (y^n + L \Delta t)}{\chi_{j_{n+1} + 1}^{n+1} - y^{n+1}}
            \geq \frac{\chi_{j_n + 1}^{n} - \left(y^n + \frac{\Delta x}{2} \right)}{\chi_{j_{n+1} + 1}^{n+1} - y^{n+1}} \geq 0, \\[5pt]
            \p_3 H_{j_n}^n(u, v, w, z) 
            & = \frac{\Delta x}{\chi_{j_{n+1} + 1}^{n+1} - y^{n+1}} 
            - \frac{\Delta t}{\chi_{j_{n+1} + 1}^{n+1} - y^{n+1}} \frac{\p \eo}{\p a}(w, z) \\[5pt]
            & \geq \frac{\Delta x - L \Delta t}{\chi_{j_{n+1} + 1}^{n+1} - y^{n+1}}
            \geq \frac{\Delta x - \Delta x/2}{\chi_{j_{n+1} + 1}^{n+1} - y^{n+1}} \geq 0, \\[5pt]
            \p_4 H_{j_n}^n(u, v, w, z) 
            & = -\frac{\Delta t}{\chi_{j_{n+1} + 1}^{n+1} - y^{n+1}} \frac{\p \eo}{\p b}(w, z) \geq 0,
        \end{aligned}
    \]

    proving the monotonicity of $H_{j_n}^n$. Similar computations show that $H_{j_n - 1}^n$ is nondecreasing with respect to its arguments as well. \\
    \textit{Stability.} We now turn to the proof of \eqref{1Bus_scheme_stability}, which is done by induction on $n$. If $n=0$, it is verified by 
    definition of $\left( \rho_{j+1/2}^{0} \right)_{j}$. Suppose now that \eqref{1Bus_scheme_stability} holds for some integer $n \geq 0$ and let us show 
    that it still holds for $n+1$. Remark that $0$ and $1$ are stationary solutions to the scheme. It is obviously true in the case 
    \eqref{1Bus_mf1}. The definitions of $H_{j_n - 1}^n$ and $H_{j_n}^n$ do not change this fact. For instance, $H_{j_n - 1}^n (0, 0, 0) = 0$ since 
    $q^n \geq 0$ and because of \eqref{1Bus_assumption_constraint3}, we also have:
    \[ 
        H_{j_n - 1}^n (1, 1, 1) 
        = \frac{(y^n - \chi_{j_n - 1}^{n}) - \left( (-s^n) \wedge q^n \right) \Delta t}{y^{n+1} - \chi_{j_{n+1} - 2}^{n+1}}
        = \frac{(y^n - \chi_{j_n - 1}^{n}) + s^n \Delta t}{y^{n+1} - \chi_{j_{n+1} - 2}^{n+1}}
        = 1.
    \]

    Similar computations would ensure that it holds also for $H_{j_n}^n$. Using now the monotonicity of $H_{j_n - 1}^n$ for instance, we deduce that
    \[
        \begin{aligned}
            0 = H_{j_n - 1}^n (0, 0, 0) & 
            \leq H_{j_n - 1}^n (\rho_{j_n - 3/2}^n, \rho_{j_n - 1/2}^n, \rho_{j_n + 1/2}^n) \\
            & = \rho_{j_{n+1} - 1/2}^{n+1} \\
            & = H_{j_n - 1}^n (\rho_{j_n - 3/2}^n, \rho_{j_n - 1/2}^n, \rho_{j_n + 1/2}^n) \leq H_{j_n - 1}^n (1, 1, 1) = 1,
        \end{aligned}
    \]

    which concludes the induction argument. The remaining cases follow from similar computations.
\end{proof}

\begin{corollary}[Discrete entropy inequalities]
    \label{1Bus_dei_cor}
    Fix $n \in \N$, $j \in \Z \backslash \{j_{n+1} - 2\}$ and $\kappa \in [0, 1]$. Then the numerical scheme 
    \eqref{1Bus_mf1}~--~\eqref{1Bus_mf3} fulfills the following discrete entropy inequalities:
    \begin{equation}
        \label{1Bus_dei}
        \begin{aligned}
            |\rho_{j+1/2}^{n+1}-\kappa| (\chi_{j+1}^{n+1} - \chi_{j}^{n+1}) \leq 
            \left\{
                \begin{array}{lll}
                    & |\rho_{j+1/2}^{n}-\kappa| (\chi_{j+1}^{n} - \chi_{j}^{n}) - \left( \Phi_{j+1}^{n} - \Phi_{j}^{n} \right) \Delta t 
                    & \text{if} \; j \notin \{j_{n+1} - 1, j_{n+1} \} \\[10pt]
                    & - |\rho_{j_{n+1} - 1/2}^{n+1}-\kappa| \Delta x + |\rho_{j_n - 1/2}^{n}-\kappa| (\chi_{j_n}^{n} - \chi_{j_n - 1}^{n}) & \\[5pt]
                    & - \left( \Phi_{int}^{n} - \Phi_{j_n - 1}^{n} \right) \Delta t + \frac{1}{2} \cR_{s^n}(\kappa, q^n) \Delta t 
                    & \text{if} \; j = j_{n+1} - 1 \\[10pt]
                    & |\rho_{j_n + 1/2}^{n}-\kappa| (\chi_{j_n + 1}^{n} - \chi_{j_n}^{n}) + |\rho_{j_n + 3/2}^{n}-\kappa| \Delta x \\[5pt]
                    & - \left( \Phi_{j_n + 2}^n - \Phi_{int}^{n} \right) \Delta t + \frac{1}{2} \cR_{s^n}(\kappa, q^n) \Delta t 
                    & \text{if} \; j = j_{n+1},
                \end{array}
            \right.
        \end{aligned}
    \end{equation}

    where $\Phi_{j}^{n}$ and $\Phi_{int}^{n}$ denote the numerical entropy fluxes:
	\[
		\begin{aligned}
			& \Phi_{j}^{n} := \eo(\rho_{j-1/2}^{n} \vee \kappa,\rho_{j+1/2}^{n} \vee \kappa) 
            - \eo(\rho_{j-1/2}^{n} \wedge \kappa,\rho_{j+1/2}^{n} \wedge \kappa); \\[5pt]
			& \Phi_{int}^{n} := \min\{\god^{s^n}(\rho_{j_{n} - 1/2}^{n} \vee \kappa,\rho_{j_{n} + 1/2}^{n} \vee \kappa),q^{n}\}
			- \min\{\god^{s^n}(\rho_{j_{n} - 1/2}^{n} \wedge \kappa,\rho_{j_{n} + 1/2}^{n} \wedge \kappa), q^{n}\}
		\end{aligned}
	\]
\end{corollary}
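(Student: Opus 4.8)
The plan is to derive \eqref{1Bus_dei} from the marching formulas \eqref{1Bus_mf1}--\eqref{1Bus_mf3} by exploiting the monotonicity established in Proposition \ref{1Bus_scheme_stability_pp}, following the classical Crandall--Tartar / Harten-type argument. First I would treat the ``easy'' case $j \notin \{j_{n+1}-1, j_{n+1}\}$, where the scheme is the standard three-point conservative formula \eqref{1Bus_mf1}. Here $\rho^{n+1}_{j+1/2} = G(\rho^n_{j-1/2}, \rho^n_{j+1/2}, \rho^n_{j+3/2})$ for a nondecreasing function $G$ that leaves the constants $\kappa$ fixed (since $G(\kappa,\kappa,\kappa)=\kappa$ by conservativity). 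Applying $G$ to the componentwise maxima and minima of $(\rho^n_{\cdot}, \kappa)$ and using $|a-\kappa| = a\vee\kappa - a\wedge\kappa$ together with monotonicity gives $|\rho^{n+1}_{j+1/2}-\kappa| \le G(\cdots\vee\kappa) - G(\cdots\wedge\kappa)$; expanding the right-hand side and using that $G$ comes from the Engquist--Osher flux yields exactly the first line of \eqref{1Bus_dei} with the stated numerical entropy flux $\Phi^n_j$. This is entirely routine and I would only sketch it, citing \cite[Chapter 5]{EGHBook}.

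The substance is in the two interface cases $j = j_{n+1}-1$ and $j = j_{n+1}$, handled via the functions $H^n_{j_n-1}$ and $H^n_{j_n}$. The key observation, already highlighted in Section \ref{1Bus_Section_mesh_scheme}, is that both $H^n_{j_n-1}$ and $H^n_{j_n}$ are nondecreasing in all their arguments (proved in Proposition \ref{1Bus_scheme_stability_pp}), and that they too fix the constants: $H^n_{j_n-1}(\kappa,\kappa,\kappa)$ and $H^n_{j_n}(\kappa,\kappa,\kappa,\kappa)$ need to be computed in terms of $\kappa$, $s^n$, $q^n$ and the cell geometry. The natural route is then to apply $H^n_{j_n-1}$ (resp. $H^n_{j_n}$) to the componentwise $\vee\kappa$ and $\wedge\kappa$ of the arguments and subtract, obtaining $|\rho^{n+1}_{j_{n+1}-1/2} - \kappa|(y^{n+1}-\chi^{n+1}_{j_{n+1}-2}) \le H^n_{j_n-1}(\cdots\vee\kappa) - H^n_{j_n-1}(\cdots\wedge\kappa)$ once we account for the multiplicative geometric factor in the denominator of $H$. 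Expanding, the $\vee/\wedge$ of an Engquist--Osher flux reproduces the numerical entropy flux $\Phi^n_{j_n-1}$ and $\Phi^n_{j_n+2}$, while the $\vee/\wedge$ of $\god^{s^n}(\cdot,\cdot)\wedge q^n$ must be matched with $\Phi^n_{int}$; the ``leftover'' from the fact that $H(\kappa,\kappa,\kappa)$ is \emph{not} simply $\kappa$ (because the constant $\kappa$ interacts with the constraint cap $q^n$) is precisely what produces the remainder term $\tfrac12 \cR_{s^n}(\kappa,q^n)\Delta t$. One has to be careful that $\cR_{s^n}(\kappa,q^n) = 2(F_{s^n}(\kappa) - F_{s^n}(\kappa)\wedge q^n)$, so the discrepancy $F_{s^n}(\kappa) - \god^{s^n}(\kappa,\kappa)\wedge q^n = F_{s^n}(\kappa) - F_{s^n}(\kappa)\wedge q^n$ appears naturally, since $\god^{s^n}(\kappa,\kappa) = F_{s^n}(\kappa)$. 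I would also note that the exclusion $j \ne j_{n+1}-2$ in the statement is because the cell $\cP^n_{j_{n+1}-2}$ is the one merged with $\cP^n_{j_{n+1}-1}$ under the rule $\rho^{n+1}_{j_{n+1}-3/2} = \rho^{n+1}_{j_{n+1}-1/2}$, so it carries no independent entropy inequality.

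The main obstacle I anticipate is bookkeeping rather than conceptual: correctly tracking which Godunov/Engquist--Osher fluxes get the $\vee\kappa$ versus $\wedge\kappa$ arguments, verifying that the geometric cell widths $\chi^{n}_{j+1}-\chi^n_j$, $y^{n+1}-\chi^{n+1}_{j_{n+1}-2}$, $\chi^{n+1}_{j_{n+1}+1}-y^{n+1}$ land in the right places, and pinning down the constant $\tfrac12 \cR_{s^n}(\kappa,q^n)\Delta t$ with the correct sign and factor $\tfrac12$. A subtle point worth checking explicitly is monotonicity of the map $\kappa \mapsto H^n_{\cdot}(\kappa,\ldots,\kappa) - \kappa\cdot(\text{geometry})$ as compared to the cap interaction, i.e. that the min with $q^n$ behaves correctly under taking $\vee\kappa$/$\wedge\kappa$ inside $\god^{s^n}$ and \emph{then} capping — here one uses that $a \mapsto a\wedge q^n$ is nondecreasing so $\min/\max$ commute appropriately with the cap. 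Once these are organized, each of the three lines of \eqref{1Bus_dei} follows by a direct expansion. I would present the $j=j_{n+1}-1$ case in full and state that $j=j_{n+1}$ is analogous, as the paper's style suggests.
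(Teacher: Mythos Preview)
Your proposal is correct and follows essentially the same route as the paper: the standard Crandall--Tartar argument for the non-interface cells, and for the interface cells the computation of $H^n_{j_n}(\kappa,\ldots,\kappa)$ (resp.\ $H^n_{j_n-1}$) to identify the defect $\tfrac12\cR_{s^n}(\kappa,q^n)\Delta t$, followed by sandwiching $\rho^{n+1}\vee\kappa$ and $\rho^{n+1}\wedge\kappa$ via monotonicity and subtracting. The only cosmetic difference is that the paper writes out the case $j=j_{n+1}$ in full and declares $j=j_{n+1}-1$ analogous, whereas you plan the reverse.
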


\begin{proof}
    This result is mostly a consequence of the scheme monotonicity. When the interface/constraint does not enter the calculations \textit{i.e.} when 
    $j \notin \{j_{n+1} - 1, j_{n+1} \}$, the proof follows \cite[Lemma 5.4]{EGHBook}. The key point is not only the monotonicity, but also the fact that in the 
    classical case, all the constants states $\kappa \in [0,1]$ are stationary solutions of the scheme. This observation does not hold when the constraint enters 
    the calculations. Suppose for example that $j = j_{n+1}$ (which corresponds to the function $H_{j_n}^n$). Here, we have
	\[
        \begin{aligned}
            H_{j_{n}}^{n}(\kappa,\kappa,\kappa, \kappa) 
            & = \frac{\kappa (\chi_{j_n + 1}^n - y^n) + \kappa \Delta x - 
            \left(f(\kappa) - (f(\kappa) -s^n \kappa)\wedge q^n \right) \Delta t}{\chi_{j_{n+1} + 1}^{n+1} - y^{n+1}} \\[5pt] 
            & = \frac{(\chi_{j_n + 2}^n - y^n -s^n \Delta t) \kappa}{\chi_{j_{n+1} + 1}^{n+1} - y^{n+1}} 
            - \frac{\Delta t}{2 (\chi_{j_{n+1} + 1}^{n+1} - y^{n+1})} \cR_{s^n}(\kappa, q^n) \\[5pt]
            & = \kappa - \frac{\Delta t}{2 (\chi_{j_{n+1} + 1}^{n+1} - y^{n+1})} \cR_{s^n}(\kappa, q^n),
        \end{aligned}
    \]
    
    and it implies:
    \[
        \begin{aligned}
            & H_{j_{n}}^{n}(\rho_{j_{n} - 1/2}^n \wedge \kappa, \rho_{j_{n} + 1/2}^n \wedge \kappa, \rho_{j_{n} + 3/2}^n \wedge \kappa, 
            \rho_{j_{n} + 5/2}^n \wedge \kappa) \\[5pt]
            & \leq \rho_{j_{n+1} + 1/2}^{n+1} \wedge \kappa, \ \rho_{j_{n+1} + 1/2}^{n+1} \vee \kappa \\[5pt]
            & \leq H_{j_{n}}^{n}(\rho_{j_{n} - 1/2}^n \vee \kappa, \rho_{j_{n} + 1/2}^n \vee \kappa, \rho_{j_{n} + 3/2}^n \vee \kappa, 
            \rho_{j_{n} + 5/2}^n \vee \kappa) + \frac{\Delta t}{2 (\chi_{j_{n+1} + 1}^{n+1} - y^{n+1})} \cR_{s^n}(\kappa, q^n).
        \end{aligned}
    \]

    We deduce: 
    \[
		\begin{aligned}
            |\rho_{j_{n+1} + 1/2}^{n+1} - \kappa|
            & = \rho_{j_{n+1} + 1/2}^{n+1} \vee \kappa - \rho_{j_{n+1} + 1/2}^{n+1} \wedge \kappa \\[5pt]
            & \leq H_{j_{n}}^{n}(\rho_{j_{n} - 1/2}^n \vee \kappa, \rho_{j_{n} + 1/2}^n \vee \kappa, \rho_{j_{n} + 3/2}^n \vee \kappa,
            \rho_{j_{n} + 5/2}^n \vee \kappa) \\[5pt]
            & - H_{j_{n}}^{n}(\rho_{j_{n} - 1/2}^n \wedge \kappa, \rho_{j_{n} + 1/2}^n \wedge \kappa, \rho_{j_{n} + 3/2}^n \wedge \kappa, 
            \rho_{j_{n} + 5/2}^n \wedge \kappa)
            + \frac{\Delta t}{2 (\chi_{j_{n+1} + 1}^{n+1} - y^{n+1})} \cR_{s^n}(\kappa, q^n) \\[5pt]
            & = \frac{\chi_{j_n + 1}^n - y^n}{\chi_{j_{n+1} + 1}^{n+1} - y^{n+1}} |\rho_{j_{n} + 1/2}^{n} - \kappa| 
            + \frac{\Delta x}{\chi_{j_{n+1} + 1}^{n+1} - y^{n+1}} |\rho_{j_{n} + 3/2}^{n} - \kappa| \\[5pt]
            & - \frac{\Delta t}{\chi_{j_{n+1} + 1}^{n+1} - y^{n+1}} \left( \Phi_{j_{n} + 2}^n - \Phi_{int}^{n} \right) 
            + \frac{\Delta t}{2 (\chi_{j_{n+1} + 1}^{n+1} - y^{n+1})} \cR_{s^n}(\kappa, q^n),
		\end{aligned}
	\]

    which is exactly \eqref{1Bus_dei} in the case $j= j_{n+1}$. The obtaining of \eqref{1Bus_dei} in the case $j = j_{n+1} - 1$ is similar, so we 
    omit the details of the proof for this case.
\end{proof}

\subsection{Continuous inequalities for the approximate solution}

The next step of the reasoning is to derive analogous inequalities to \eqref{1Bus_ei1}-\eqref{1Bus_ci1}, verified by the approximate 
solution $\rho_\Delta$, starting from \eqref{1Bus_dei} and 
\eqref{1Bus_mf1}~--~\eqref{1Bus_mf3}.

In this section, we fix a test function $\varphi \in \Cc{\infty}(\overline{\Omega}, \R^+)$ and set:
\[
    \forall n \in \N, \; \forall j \in \Z, \quad 
    \varphi_{j+1/2}^{n} 
    := \frac{1}{\chi_{j+1}^n - \chi_{j}^n} \int_{\chi_{j}^n}^{\chi_{j+1}^n} \varphi(x, t^n) \d{x}
    = \fint _{\chi_{j}^n}^{\chi_{j+1}^n} \varphi(x, t^n) \d{x}.
\]

We start by deriving continuous entropy inequalities verified by $\rho_\Delta$. Define the approximate entropy flux:
\[
	\Phi_\Delta(\rho_\Delta, \kappa) := \sum_{n \in \N} \left( 
        \sum_{j \leq j_n} \Phi_{j}^n \1_{\cP_{j+1/2}^{n}} + \sum_{j \geq j_n + 1} \Phi_{j+1}^n \1_{\cP_{j+1/2}^{n}} \right).
\]

\begin{proposition}[Approximate entropy inequalities]
    \label{1Bus_aei_pp}
    Fix $n \in \N$ and $\kappa \in [0,1]$. Then we have 
	\begin{equation}
		\label{1Bus_aei1}
		\begin{aligned}
            & \int_{t^n}^{t^{n+1}} \int_{\R} \biggl( |\rho_{\Delta}-\kappa| \p_{t} \varphi 
            + \Phi_\Delta \left( \rho_{\Delta},\kappa \right) \p_{x} \varphi \biggr) \d{x} \d{t} \\[5pt]
            & + \int_{\R} |\rho_{\Delta}(x, t^n)-\kappa| \varphi(x, t^n) \d{x} - \int_{\R} |\rho_{\Delta}(x, t^{n+1})-\kappa| \varphi(x, t^{n+1}) \d{x} \\[5pt]
            & + \int_{t^n}^{t^{n+1}} \cR_{s_\Delta (t)} (\kappa, q_{\Delta}(t)) \varphi(y_\Delta(t), t) \d{t} 
            \geq \bigO{\Delta x^2} + \bigO{\Delta x \Delta t} + \bigO{\Delta t^2}.
		\end{aligned}
    \end{equation}
\end{proposition}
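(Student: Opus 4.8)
The plan is to multiply each discrete entropy inequality from Corollary 1.16 by the cell-averaged test value $\varphi_{j+1/2}^n$, sum over $j \in \Z$ for the fixed time step $n$, and then carefully re-express the resulting sum of discrete quantities as integrals against $\varphi$ over $\cP_{j+1/2}^n$, incurring only $\bigO{\Delta x^2}$, $\bigO{\Delta x\,\Delta t}$, $\bigO{\Delta t^2}$ errors from Taylor expansion of $\varphi$. This is the standard Lax–Wendroff-type manipulation (see \cite[Chapter 5]{EGHBook}) adapted to the locally modified mesh. First I would treat the "bulk" cells $j \notin \{j_{n+1}-1, j_{n+1}\}$: there the first line of \eqref{1Bus_dei} is exactly the classical discrete entropy inequality on a rectangle of width $\chi_{j+1}^n - \chi_j^n$ (equal to $\Delta x$ except possibly near the interface), so after multiplying by $\varphi_{j+1/2}^n$ and summing, an Abel/discrete integration-by-parts converts the flux-difference terms $(\Phi_{j+1}^n - \Phi_j^n)\Delta t$ into a telescoping sum that matches $\int \Phi_\Delta \p_x \varphi$ up to quadratic errors, and the time-difference terms match $\int |\rho_\Delta - \kappa|\p_t \varphi$ plus the boundary-in-time terms $\int_{\R}|\rho_\Delta(\cdot,t^n)-\kappa|\varphi(\cdot,t^n) - \int_{\R}|\rho_\Delta(\cdot,t^{n+1})-\kappa|\varphi(\cdot,t^{n+1})$.

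The delicate part is the two interface cells $j = j_{n+1}-1$ and $j = j_{n+1}$, together with the "ghost" cell $j = j_{n+1}-2$ (on which, recall, $\rho_\Delta$ carries the same value as on cell $j_{n+1}-1$, and which is excluded from Corollary 1.16). I would add the inequalities for $j=j_{n+1}-1$ and $j=j_{n+1}$ and account for the cell $j_{n+1}-2$ by noting $\rho_{j_{n+1}-3/2}^{n+1} = \rho_{j_{n+1}-1/2}^{n+1}$, so that the $-|\rho_{j_{n+1}-1/2}^{n+1}-\kappa|\Delta x$ term in \eqref{1Bus_dei} precisely supplies the missing mass on the $j_{n+1}-2$ cell at time $t^{n+1}$; similarly the $+|\rho_{j_n+3/2}^n - \kappa|\Delta x$ term in the $j=j_{n+1}$ line accounts for the cell $\cP_{j_n+3/2}^n$ that got absorbed at time $t^n$. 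The interior numerical fluxes $\Phi_{int}^n$ telescope against $\Phi_{j_n-1}^n$ and $\Phi_{j_n+2}^n$ so that, globally, all numerical entropy fluxes reassemble into $\Phi_\Delta(\rho_\Delta,\kappa)$; one checks that the geometric discrepancy between the interface being a broken line through $(y^n,t^n)$–$(y^{n+1},t^{n+1})$ and the piecewise-affine curve $y_\Delta$ is consistent, using that $y^{n+1} - y^n = s^n\Delta t$ by construction. The two remainder terms $\tfrac12 \cR_{s^n}(\kappa,q^n)\Delta t$ combine to $\cR_{s^n}(\kappa,q^n)\Delta t = \int_{t^n}^{t^{n+1}} \cR_{s_\Delta(t)}(\kappa,q_\Delta(t))\,\d t$, and multiplying by $\varphi$ at the interface costs only $\bigO{\Delta t^2} + \bigO{\Delta x\,\Delta t}$ since $|\varphi_{j+1/2}^n - \varphi(y_\Delta(t),t)| = \bigO{\Delta x} + \bigO{\Delta t}$ on the relevant cells.

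I expect the main obstacle to be purely bookkeeping: correctly tracking, on the modified cells, the non-uniform cell widths $\chi_{j+1}^{n} - \chi_j^{n}$ (which differ from $\Delta x$ only for $j \in \{j_n-1,j_n\}$ and possibly change between $t^n$ and $t^{n+1}$ as $j_n \mapsto j_{n+1}$), making sure the discrete integration by parts telescopes cleanly across the interface rather than leaving a spurious boundary term, and verifying that every mismatch between a discrete sum and the corresponding integral $\int_{\cP_{j+1/2}^n}(\cdot)$ is genuinely of the claimed quadratic order — here one uses $\|\p_x\varphi\|_\infty, \|\p_t\varphi\|_\infty, \|\p_{xt}^2\varphi\|_\infty < \infty$ and the fact that there are $\bigO{1/\Delta x}$ bulk cells contributing $\bigO{\Delta x^2}$ errors each over a time layer of length $\Delta t$, while the $\bigO{1}$ interface cells contribute $\bigO{\Delta x\,\Delta t} + \bigO{\Delta t^2}$. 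Since the statement is for a single time step $n$ (not yet summed over $n$), these per-layer bounds are exactly what is asserted, and no CFL-dependent constant beyond $L$ enters. The monotonicity of $H_{j_n-1}^n, H_{j_n}^n$ established in Proposition 1.14, and the identity $H_{j_n}^n(\kappa,\kappa,\kappa,\kappa) = \kappa - \tfrac{\Delta t}{2(\chi_{j_{n+1}+1}^{n+1}-y^{n+1})}\cR_{s^n}(\kappa,q^n)$ already computed in the proof of Corollary 1.16, are the only structural inputs needed; everything else is Taylor expansion.
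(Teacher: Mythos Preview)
Your plan is essentially the paper's own proof: multiply the discrete entropy inequalities by cell-averaged test values, sum, apply Abel's summation, and bound the discrepancies between discrete sums and integrals term by term. The only cosmetic differences are that the paper multiplies by $\varphi_{j+1/2}^{n+1}$ rather than $\varphi_{j+1/2}^{n}$ (which aligns more naturally with the left side $|\rho_{j+1/2}^{n+1}-\kappa|(\chi_{j+1}^{n+1}-\chi_j^{n+1})$), and your error accounting is slightly off in attribution --- the $\bigO{\Delta x^2}$ contributions come from the handful of interface cells (the $\eps_1,\eps_2,\eps_3$ terms in the paper), while the bulk cells contribute $\bigO{\Delta x\,\Delta t}$ via the $\L{1}$-in-space norms of $\p_{xx}^2\varphi$ and $\p_{tx}^2\varphi$, not $\bigO{1/\Delta x}\times\bigO{\Delta x^2}$.
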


\begin{proof}
    For all $j \in \Z \backslash \{j_{n+1} - 2\}$, we multiply the discrete entropy inequalities \eqref{1Bus_dei} by $\varphi_{j+1/2}^{n+1}$ and take the sum to 
    obtain:
    \[
        \begin{aligned}
            & \sum_{j \neq j_{n+1} - 2} \left| \rho_{j+1/2}^{n+1} - \kappa \right| 
            \varphi_{j+1/2}^{n+1} (\chi_{j+1}^{n+1} - \chi_{j}^{n+1}) \\[5pt]
            & \leq \sum_{j \notin \{j_{n+1} - 2, j_{n+1} - 1, j_{n+1} \}} \left( \left| \rho_{j+1/2}^{n} - \kappa \right| 
            (\chi_{j+1}^{n} - \chi_{j}^{n}) - (\Phi_{j+1}^n - \Phi_j^n) \Delta t \right) \varphi_{j+1/2}^{n+1} \\[5pt]
            & + |\rho_{j_n - 1/2}^{n}-\kappa| \varphi_{j_{n+1} - 1/2}^{n+1} (\chi_{j_n}^{n} - \chi_{j_n - 1}^{n}) 
            - |\rho_{j_{n+1} - 1/2}^{n+1}-\kappa| \varphi_{j_{n+1} - 1/2}^{n+1} \Delta x
            - \left( \Phi_{int}^{n} - \Phi_{j_n - 1}^{n} \right) \varphi_{j_{n+1} - 1/2}^{n+1} \Delta t \\[5pt]
            & + |\rho_{j_n + 1/2}^{n}-\kappa| \varphi_{j_{n+1} + 1/2}^{n+1} (\chi_{j_n + 1}^{n} - \chi_{j_n}^{n}) 
            + |\rho_{j_n + 3/2}^{n}-\kappa| \varphi_{j_{n+1} + 1/2}^{n+1} \Delta x 
            - \left( \Phi_{j_n + 2}^n - \Phi_{int}^{n} \right) \varphi_{j_{n+1} + 1/2}^{n+1} \Delta t \\[5pt]
            & + \frac{1}{2} \cR_{s^n}(\kappa, q^n)  (\varphi_{j_{n+1} - 1/2}^{n+1} + \varphi_{j_{n+1} + 1/2}^{n+1}) \Delta t.
        \end{aligned}
    \]

    This inequality can be rewritten as
    \[
        \begin{aligned}
            & \sum_{j \in \Z} \left| \rho_{j+1/2}^{n+1} - \kappa \right| \varphi_{j+1/2}^{n+1} (\chi_{j+1}^{n+1} - \chi_{j}^{n+1})
            - \sum_{j \in \Z} \left| \rho_{j+1/2}^{n} - \kappa \right| \varphi_{j+1/2}^{n+1} (\chi_{j+1}^{n} - \chi_{j}^{n}) \\[5pt]
            & \leq - \underbrace{\left| \rho_{j_{n+1} - 1/2}^{n+1} - \kappa \right| 
            \left( \varphi_{j_{n+1} - 1/2}^{n+1} - \varphi_{j_{n+1} - 3/2}^{n+1} \right) \Delta x}_{\eps_1}
            + \underbrace{\left| \rho_{j_n - 1/2}^{n} - \kappa \right| 
            \left( \varphi_{j_{n+1} - 1/2}^{n+1} - \varphi_{j_{n+1} - 3/2}^{n+1} \right) (\chi_{j_n}^{n} - \chi_{j_n - 1}^{n})}_{\eps_2} \\[5pt]
            & + \underbrace{\left| \rho_{j_{n} + 1/2}^{n} - \kappa \right| 
            \left( \varphi_{j_{n+1} + 1/2}^{n+1} - \varphi_{j_{n+1} - 1/2}^{n+1} \right) (\chi_{j_n + 1}^{n} - \chi_{j_n}^{n})}_{\eps_3} \\[5pt]
            & - \sum_{j \notin \{j_{n+1}-2 , j_{n+1}-1, j_{n+1} \}} (\Phi_{j+1}^n - \Phi_j^n) \varphi_{j+1/2}^{n+1} \Delta t
            - \left( \Phi_{int}^{n} - \Phi_{j_n - 1}^{n} \right) \varphi_{j_{n+1} - 1/2}^{n+1} \Delta t \\[5pt]
            & - \left( \Phi_{j_n + 2}^n - \Phi_{int}^{n} \right) 
            \varphi_{j_{n+1} + 1/2}^{n+1} \Delta t 
            + \frac{1}{2} \cR_{s^n}(\kappa, q^n)  (\varphi_{j_{n+1} - 1/2}^{n+1} + \varphi_{j_{n+1} + 1/2}^{n+1}) \Delta t,
        \end{aligned}
    \]

    with for all $i \in [\![1; 3]\!]$, 
    $|\eps_i| \leq 8 \|\p_x \varphi \|_{\L{\infty}} \Delta x^2$.
    We now proceed to the Abel's transformation and reorganize the terms of the inequality. This leads us to:
    \[
        \begin{aligned}
            & \underbrace{\sum_{j \in \Z} \left| \rho_{j+1/2}^{n+1} - \kappa \right| \varphi_{j+1/2}^{n+1} (\chi_{j+1}^{n+1} - \chi_{j}^{n+1}) 
            - \sum_{j \in \Z} \left| \rho_{j+1/2}^{n} - \kappa \right| \varphi_{j+1/2}^{n} (\chi_{j+1}^{n} - \chi_{j}^{n})}_{A} \\[5pt]
            & - \underbrace{\sum_{j \in \Z} \left| \rho_{j+1/2}^{n} - \kappa \right| \left( \varphi_{j+1/2}^{n+1} - \varphi_{j+1/2}^{n} \right) 
            (\chi_{j+1}^{n} - \chi_{j}^{n})}_{B} 
            + \underbrace{\sum_{j \notin \{j_{n+1} - 2, j_{n + 1} - 1 \}} \Phi_j^n 
            \left( \varphi_{j+1/2}^{n+1} - \varphi_{j-1/2}^{n+1} \right) \Delta t}_{C} \\[5pt]
            & \leq \underbrace{\frac{1}{2} \cR_{s^n}(\kappa, q^n)  (\varphi_{j_{n+1} - 1/2}^{n+1} + \varphi_{j_{n+1} + 1/2}^{n+1}) \Delta t}_{D} 
            + \sum_{i=1}^5 \eps_i,
        \end{aligned}
    \]

    with for all $i \in [\![4; 5]\!]$, 
    $|\eps_i| \leq 4 \|f\|_{\L{\infty}} \|\p_x \varphi \|_{\L{\infty}} \Delta x \Delta t$. We immediately see that 
    \[
        A = \int_{\R} \left|\rho_{\Delta}(x, t^{n+1}) - \kappa \right| \varphi(x, t^{n+1}) \d{x}
        - \int_{\R} \left|\rho_{\Delta}(x, t^{n}) - \kappa \right| \varphi(x, t^{n}) \d{x}.
    \]
    
    We conclude this proof by estimating the remaining terms of the inequality.

    \textbf{Estimating $B$.} First, note that 
    \[
        \begin{aligned}
            B 
            & = \sum_{j \leq j_n - 2} \iint_{\cP_{j+1/2}^n} \left|\rho_\Delta - \kappa \right| \p_t \varphi \d{x} \d{t} 
            + \sum_{j \geq j_n + 1} \iint_{\cP_{j+1/2}^n} \left|\rho_\Delta - \kappa \right| \p_t \varphi \d{x} \d{t} \\[5pt]
            & + \underbrace{\left| \rho_{j_n - 1/2}^{n} - \kappa \right| \biggl( 
            \fint_{\chi_{j_n - 1}^{n+1}}^{\chi_{j_n + 1}^{n+1}} \varphi(x, t^{n+1}) \d{x} - \fint_{\chi_{j_n - 1}^{n}}^{y^{n}} \varphi(x, t^n) \d{x} \biggr)
            (y^n - \chi_{j_n - 1}^n)}_{B_1} \\[5pt]
            & + \underbrace{\left| \rho_{j_n + 1/2}^{n} - \kappa \right| \biggl( 
            \fint_{\chi_{j_n}^{n+1}}^{y^{n+1}} \varphi(x, t^{n+1}) \d{x} - \fint_{y^{n}}^{\chi_{j_n + 1}^n} \varphi(x, t^n) \d{x} \biggr)
            (\chi_{j_n + 1}^n - y^n)}_{B_2} \\[5pt]
            & + \underbrace{\left| \rho_{j_n + 3/2}^{n} - \kappa \right| \biggl( 
            \fint_{y^{n+1}}^{\chi_{j_n + 2}^{n+1}} \varphi(x, t^{n+1}) \d{x} - \fint_{\chi_{j_n + 1}^n}^{\chi_{j_n + 2}^n} \varphi(x, t^n) \d{x} \biggr)
            \Delta x}_{B_3}.
        \end{aligned}    
    \]

    Since
    \[
        \begin{aligned}
            & \iint_{\cP_{j_n - 1/2}^n} \left|\rho_\Delta - \kappa \right| \p_t \varphi \d{x} \d{t} \\[5pt]
            & = \left| \rho_{j_n - 1/2}^{n} - \kappa \right| \biggl( 
            \int_{\chi_{j_n - 1}^{n+1}}^{y^{n+1}} \varphi(x, t^{n+1}) \d{x} - \int_{\chi_{j_n-1}^{n}}^{y^{n}} \varphi(x, t^n) \d{x} 
            - s^n \int_{t^n}^{t^{n+1}} \varphi(y^n + s^n (t - t^n), t) \d{t} \biggr) \\[5pt]
            & = \left| \rho_{j_n - 1/2}^{n} - \kappa \right| 
            \biggl( \frac{y^{n+1} - \chi_{j_n - 1}^{n+1}}{y^n - \chi_{j_n - 1}^n} \fint_{\chi_{j_n - 1}^{n+1}}^{y^{n+1}} \varphi(x, t^{n+1}) \d{x} 
            - \fint_{\chi_{j_n-1}^{n}}^{y^{n}} \varphi(x, t^n) \d{x} \biggr. \\[5pt]
            & \biggl. 
            + \frac{y^n - y^{n+1}}{y^n - \chi_{j_n - 1}^n} \fint_{t^n}^{t^{n+1}} \varphi(y^n + s^n (t - t^n), t) \d{t} \biggr) (y^n - \chi_{j_n - 1}^n),
        \end{aligned}
    \]

    we deduce the bound:
    \[
        \begin{aligned}
            & \left| B_1 - \iint_{\cP_{j_n - 1/2}^n} \left|\rho_\Delta - \kappa \right| \p_t \varphi \d{x} \d{t} \right| \\[5pt]
            & = \left| \rho_{j_n - 1/2}^{n} - \kappa \right| (y^{n+1} - y^n) \left| 
            \fint_{\chi_{j_n - 1}^{n+1}}^{y^{n+1}} \varphi(x, t^{n+1}) \d{x} - \fint_{t^n}^{t^{n+1}} \varphi(y^n + s^n (t - t^n), t) \d{t}\right| \\[5pt]
            & \leq \|\dot y \|_{\L{\infty}} \biggl( 3 \|\p_x \varphi \|_{\L{\infty}} \Delta x + \|\p_t \varphi \|_{\L{\infty}} \Delta t  
            + 2 \|\dot y \|_{\L{\infty}} \|\p_x \varphi \|_{\L{\infty}} \Delta t \biggr) \Delta t.
        \end{aligned}
    \]
    
    The same way, we would derive the estimation:
    \[
        \begin{aligned}
            & \left| B_2 + B_3 - \iint_{\cP_{j_n + 1/2}^n} \left|\rho_\Delta - \kappa \right| \p_t \varphi \d{x} \d{t} \right| \\[5pt]
            & \leq 6 \|\p_x \varphi\|_{\L{\infty}} \Delta x^2 
            + \|\dot y \|_{\L{\infty}} \biggl( 2 \|\p_x \varphi \|_{\L{\infty}} \Delta x + \|\p_t \varphi \|_{\L{\infty}} \Delta t 
            + 2 \|\dot y \|_{\L{\infty}} \|\p_x \varphi \|_{\L{\infty}} \Delta t \biggr) \Delta t.
        \end{aligned}
    \]

    \textbf{Estimating $C$.} We write:
    \[
        \begin{aligned}
            C
            & = \lambda \sum_{j \notin \{j_{n+1}-2, j_{n+1}-1, j_{n+1}\}} 
            \int_{\chi_j^{n}}^{\chi_{j+1}^n} \int_{x-\Delta x}^x \Phi_j^n \p_x \varphi(y, t^{n+1}) \d{y} \d{x} 
            + \underbrace{\Phi_{j_{n+1}}^n \left( \varphi_{j_{n+1} + 1/2}^{n+1} - \varphi_{j_{n+1} - 1/2}^{n+1} \right) \Delta t}_{\eps_6} \\[5pt]
            & = \int_{t^n}^{t^{n+1}} \int_{\R} \Phi_\Delta(\rho_\Delta, \kappa) \p_x \varphi \d{x} \d{t} + \eps_6 
            - \underbrace{\sum_{j_{n+1}-2 \leq j \leq j_{n+1}-1} \iint_{\cP_{j+1/2}^n} \Phi_\Delta(\rho_\Delta, \kappa) \p_x \varphi \d{x} \d{t}}_{\eps_7} \\[5pt]
            & + \underbrace{\sum_{j \notin \{j_{n+1}-2, j_{n+1}-1, j_{n+1}\}} 
            \biggl( \lambda \int_{\chi_j^{n}}^{\chi_{j+1}^n} \int_{x-\Delta x}^x \Phi_j^n \p_x \varphi(y, t^{n+1}) \d{y} \d{x}  \biggr)
            - \int_{t^n}^{t^{n+1}} \int_{\R} \Phi_\Delta(\rho_\Delta, \kappa) \p_x \varphi \d{x} \d{t}}_{\eps_8},
        \end{aligned}
    \]

    with 
    \[
        |\eps_6| + |\eps_7| \leq 8 \|f\|_{\L{\infty}} \|\p_x \varphi \|_{\L{\infty}} \Delta x \Delta t,
    \]

    and 
    \[
        |\eps_8| \leq \|f\|_{\L{\infty}} \biggl( 
            4 \|\p_{xx}^2 \varphi\|_{\L{\infty}(\R^+, \L{1})} \Delta x 
        +\|\p_{tx}^2 \varphi\|_{\L{\infty}(\R^+, \L{1})} \Delta t \biggr) \Delta t.    
    \]

    \textbf{Estimating $D$.} Finally, we have 
    \[
        \begin{aligned}
            D
            & = \cR_{s^n}(\kappa, q^n) \varphi(y^{n+1}, t^{n+1}) \Delta t 
            + \underbrace{\frac{1}{y^{n+1} - \chi_{j_{n+1}-1}} 
            \int_{\chi_{j_{n+1}-1}^{n+1}}^{y^{n+1}} (\varphi(x, t^{n+1}) - \varphi(y^{n+1}, t^{n+1})) \Delta t}_{\eps_9} \\[5pt]
            & + \underbrace{\frac{1}{\chi_{j_{n+1}+1} - y^{n+1}} 
            \int_{y^{n+1}}^{\chi_{j_{n+1}+1}^{n+1}} (\varphi(x, t^{n+1}) - \varphi(y^{n+1}, t^{n+1})) \Delta t}_{\eps_{10}} \\[5pt]
            & = \int_{t^n}^{t^{n+1}} \cR_{s_\Delta (t)}(\kappa, q_\Delta (t)) \varphi(y_\Delta (t), t) \d{t}
            + \eps_9 + \eps_{10} + 
            \underbrace{\int_{t^n}^{t^{n+1}} \cR_{s_\Delta (t)}(\kappa, q_\Delta (t)) (\varphi(y^{n+1}, t^{n+1}) - \varphi(y_\Delta (t), t)) \d{t}}_{\eps_{11}},
        \end{aligned}
    \]

    with 
    \[
        |\eps_9| + |\eps_{10}| + |\eps_{11}| 
        \leq 2 \|f\|_{\L{\infty}} \biggl( 2\|\p_x \varphi\|_{\L{\infty}} \Delta x + \|\dot y \|_{\L{\infty}} \|\p_x \varphi\|_{\L{\infty}} \Delta t 
        + \|\p_t \varphi\|_{\L{\infty}} \Delta t \biggr) \Delta t.     
    \]
\end{proof}

Note that if $\varphi$ is supported in time in $[0, T]$, with $T \in [t^N, t^{N+1}\mathclose[$, then by summing \eqref{1Bus_aei1} over $n \in [\![0; N+1]\!]$, 
we obtain (recall that $\lambda$ is fixed):
\begin{equation}
    \label{1Bus_aei2}
    \begin{aligned}
        \int_{0}^{T} \int_{\R} & \biggl( |\rho_{\Delta} - \kappa| \p_{t} \varphi 
        + \Phi_\Delta \left( \rho_{\Delta},\kappa \right) \p_{x}\varphi \biggr) \d{x} \d{t} 
        + \int_{\R} |\rho_{\Delta}^0 -\kappa| \varphi(x, 0) \d{x} \\[5pt]
        & + \int_{0}^{T} \cR_{s_\Delta (t)}(\kappa, q_{\Delta}(t)) \varphi(y_\Delta (t), t) \d{t} \geq \bigO{\Delta x} + \bigO{\Delta t}.
    \end{aligned}
\end{equation}

We now turn to the proof of an approximate version of \eqref{1Bus_ci1}. Let us define the approximate flux function:
\[
	\bF_\Delta \left( \rho_{\Delta} \right) := \sum_{n \in \N} \left( 
        \sum_{j \leq j_n} f_{j}^n \1_{\cP_{j+1/2}^{n}} + \sum_{j \geq j_n + 1} f_{j+1}^n \1_{\cP_{j+1/2}^{n}} \right).
\]

\begin{proposition}[Approximate constraint inequalities]
    \label{1Bus_aci_pp}
    Fix $n \in \N$ and $\kappa \in [0,1]$. Then we have 
	\begin{equation}
		\label{1Bus_aci1}
		\begin{aligned}
            & \int_{y^n}^{+\infty} \rho_{\Delta}(x, t^n) \varphi(x, t^n) \d{x}
            - \int_{y^{n+1}}^{+\infty} \rho_{\Delta}(x, t^{n+1}) \varphi(x, t^{n+1}) \d{x} \\[5pt]
            & \begin{array}{cl}
                \ds{- \int_{t^n}^{t^{n+1}} \int_{\R} \biggl( \rho_{\Delta} \p_{t} \varphi 
                + \bF_\Delta \left( \rho_{\Delta} \right) \p_{x} \varphi \biggr) \d{x} \d{t}} & 
                \leq \ds{\int_{t^n}^{t^{n+1}} q_\Delta(t) \varphi(y_\Delta(t), t) \d{t}} \\[10pt]
                & + \bigO{\Delta x^2} + \bigO{\Delta x \Delta t} + \bigO{\Delta t^2}.
            \end{array}
		\end{aligned}
    \end{equation}
\end{proposition}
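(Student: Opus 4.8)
The plan is to replay the proof of Proposition \ref{1Bus_aei_pp}, with three adjustments: I start directly from the conservation marching formulas \eqref{1Bus_mf1}~--~\eqref{1Bus_mf3} rather than from the discrete entropy inequalities \eqref{1Bus_dei}; I sum the resulting balances only over the control volumes lying on the right of the interface, namely the $\cP_{j+1/2}^{n}$ with $j\geq j_n$ (the rectangular ones, governed by \eqref{1Bus_mf1}, for $j\geq j_n+1$, and the parallelogram $\cP_{j_n+1/2}^{n}$, governed by \eqref{1Bus_mf3}); and I exploit the elementary bound $f_{int}^n\leq q^n$, which is built into \eqref{1Bus_numerical_fluxes}, to turn the resulting equality into the announced inequality. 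This is the scheme-level counterpart of the computation carried out in Remark \ref{1Bus_rk1}.

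First I would multiply each of these identities by the corresponding test-function cell average $\varphi_{j+1/2}^{n+1}$ and sum over $j\geq j_n$. Since $\varphi$ has compact support the sum is finite and there is no contribution from $x=+\infty$, so after an Abel transformation the interior Engquist--Osher fluxes telescope and the only boundary term left is the flux through $\Gamma$, equal to $f_{int}^n\,\Delta t$ times a nonnegative average $\overline{\varphi}^{\,n}$ of $\varphi$ along the interface over $[t^n,t^{n+1}]$. Proceeding then as for the quantities $A$, $B$, $C$ in the proof of Proposition \ref{1Bus_aei_pp} --- with $\rho_{\Delta}$ in place of $|\rho_{\Delta}-\kappa|$ and $\bF_\Delta(\rho_{\Delta})$ in place of $\Phi_\Delta(\rho_{\Delta},\kappa)$ --- I would reorganise the remaining terms and recognise the resulting Riemann sums as precisely the boundary integrals over $\{x>y^n\}$ and $\{x>y^{n+1}\}$ and the integrals $\iint\rho_{\Delta}\,\p_t\varphi$ and $\iint\bF_\Delta(\rho_{\Delta})\,\p_x\varphi$ appearing on the left-hand side of \eqref{1Bus_aci1}, up to remainders of order $\bigO{\Delta x^2}+\bigO{\Delta x\Delta t}+\bigO{\Delta t^2}$ estimated through the $\L{\infty}$ bound on $\rho_{\Delta}$ and the moduli of continuity of $\varphi$. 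In other words, modulo this error the left-hand side of \eqref{1Bus_aci1} equals $f_{int}^n\,\Delta t\,\overline{\varphi}^{\,n}$, so that it remains to show $f_{int}^n\,\Delta t\,\overline{\varphi}^{\,n}\leq\int_{t^n}^{t^{n+1}}q_\Delta(t)\,\varphi(y_\Delta(t),t)\,\d t+\bigO{\Delta x\Delta t}+\bigO{\Delta t^2}$.

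For this last step I would use $f_{int}^n=\god^{s^n}(\rho_{j_n-1/2}^{n},\rho_{j_n+1/2}^{n})\wedge q^n\leq q^n$ together with $\varphi\geq0$ to bound $f_{int}^n\,\Delta t\,\overline{\varphi}^{\,n}\leq q^n\,\Delta t\,\overline{\varphi}^{\,n}$, and then replace the piecewise-constant $q^n$ and the average $\overline{\varphi}^{\,n}$ by $\int_{t^n}^{t^{n+1}}q_\Delta(t)\,\varphi(y_\Delta(t),t)\,\d t$, the discrepancy being $\bigO{\Delta x\Delta t}+\bigO{\Delta t^2}$ exactly as for the term $D$ in the proof of Proposition \ref{1Bus_aei_pp}. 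This gives \eqref{1Bus_aci1}; the remaining geometric configurations $j_{n+1}=j_n$ and $j_{n+1}=j_n-1$ are treated in the same way.

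I expect the only genuinely non-routine point to be the bookkeeping around the interface parallelogram $\cP_{j_n+1/2}^{n}$ in the second step: because its lateral side is the slanted interface with slope $s^n$, one must express $\iint_{\cP_{j_n+1/2}^{n}}\rho_{\Delta}\,\p_t\varphi$ and $\iint_{\cP_{j_n+1/2}^{n}}\bF_\Delta(\rho_{\Delta})\,\p_x\varphi$ through the marching quantities of \eqref{1Bus_mf3} and check that the slope $s^n$ is exactly what reconciles the physical flux $\bF_\Delta$ with the normal Godunov flux $\god^{s^n}(\cdot,\cdot)\wedge q^n$ appearing at the interface, the mismatch between the parallelogram and the genuine moving domain $\{x>y_\Delta(t)\}$ being controlled by $\|\dot y\|_{\L{\infty}}\,\Delta t$ and the regularity of $\varphi$. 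This, however, repeats verbatim the computation already carried out for $B_2$, $B_3$ and the associated $\eps_i$'s in the proof of Proposition \ref{1Bus_aei_pp}, now with $\rho_{\Delta}$ and $\bF_\Delta$ in the roles of $|\rho_{\Delta}-\kappa|$ and $\Phi_\Delta$, so that no new estimate is required; everything else is the standard monotone finite-volume and compact-support manipulation.
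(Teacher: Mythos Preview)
Your proposal is correct and follows essentially the same approach as the paper: multiply the marching formulas \eqref{1Bus_mf1}--\eqref{1Bus_mf3} by $\varphi_{j+1/2}^{n+1}$, sum over the cells to the right of the interface, apply Abel's transformation, use $f_{int}^n\leq q^n$ to produce the inequality, and estimate the terms $A$, $B$, $C$, $D$ exactly as in the proof of Proposition~\ref{1Bus_aei_pp}. The paper indexes the summation as $j\geq j_{n+1}$ rather than $j\geq j_n$, but this is only a bookkeeping difference in the case $j_{n+1}=j_n+1$ under consideration.
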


\begin{proof}
    Following the steps of the proof of Proposition \ref{1Bus_aei_pp}, we first multiply the scheme 
    \eqref{1Bus_mf1}-\eqref{1Bus_mf3} by $\varphi_{j+1/2}^{n+1}$, sum over $j \geq j_{n+1}$ and then apply the 
    summation by parts procedure. This time, we obtain:
    \[
        \begin{aligned}
            & \underbrace{\sum_{j \geq j_{n+1}} \rho_{j+1/2}^{n+1} \varphi_{j+1/2}^{n+1} (\chi_{j+1}^{n+1} - \chi_{j}^{n+1}) 
            - \sum_{j \geq j_n} \rho_{j+1/2}^{n} \varphi_{j+1/2}^{n} (\chi_{j+1}^{n} - \chi_{j}^{n})}_{A} \\[5pt]
            & - \underbrace{\sum_{j \geq j_n} \rho_{j+1/2}^{n} \left( \varphi_{j+1/2}^{n+1} - \varphi_{j+1/2}^{n} \right) (\chi_{j+1}^{n} - \chi_{j}^{n})}_{B} 
            + \underbrace{\sum_{j \geq j_n + 2} f_j^n \left( \varphi_{j+1/2}^{n+1} - \varphi_{j-1/2}^{n+1} \right) \Delta t}_{C}
            \leq \underbrace{q^n  \varphi_{j_{n+1} +1/2}^{n+1} \Delta t}_{D} + \eps,
        \end{aligned}
    \]

    with $\eps \leq 8 \|\p_x \varphi \|_{\L{\infty}} \Delta x^2$. Clearly,
    \[
        A = \int_{y^{n+1}}^{+\infty} \rho_{\Delta}(x, t^{n+1}) \varphi(x, t^{n+1}) \d{x} - \int_{y^{n}}^{+\infty} \rho_{\Delta}(x, t^{n}) \varphi(x, t^{n}) \d{x},
    \]

    and estimate \eqref{1Bus_aci1} follows from the bounds:
    \[
        \begin{aligned}
            & \left| B - \int_{t^n}^{t^{n+1}} \int_{\R} \rho_{\Delta} \p_{t} \varphi \d{x} \d{t} \right| \\[5pt]
            & \leq (3 \|\p_x \varphi\|_{\L{\infty}} \Delta x + \|\p_t \varphi\|_{\L{\infty}} \Delta t) \Delta t 
            + \|\dot y\|_{\L{\infty}} \biggl( 2 \|\p_x \varphi \|_{\L{\infty}} \Delta x + 2 \|\dot y\|_{\L{\infty}} \|\p_x \varphi \|_{\L{\infty}} \Delta t 
            + \|\p_t \varphi \|_{\L{\infty}} \Delta t \biggr) \Delta t \\[5pt]
            & \left| C - \int_{t^n}^{t^{n+1}} \int_{\R} \bF_\Delta \left( \rho_{\Delta} \right) \p_{x} \varphi \d{x} \d{t} \right| 
            \leq \|f\|_{\L{\infty}} \biggl( 6 \|\p_x \varphi \|_{\L{\infty}} + 4 
            \|\p_{xx}^2 \varphi \|_{\L{\infty}(\R^+, \L{1})} 
            + \|\p_{tx}^2 \varphi \|_{\L{\infty}(\R^+, \L{1})} \biggr) \Delta x \Delta t \\[5pt]
            & \left| D - \int_{t^n}^{t^{n+1}} q_\Delta(t) \varphi(y_\Delta (t), t) \d{t} \right| 
            \leq \|q\|_{\L{\infty}} \biggl( 2 \|\p_x \varphi \|_{\L{\infty}} \Delta x + \|\p_t \varphi \|_{\L{\infty}} \Delta t  
            +  \|\dot y \|_{\L{\infty}} \|\p_x \varphi \|_{\L{\infty}} \Delta t \biggr) \Delta t.
        \end{aligned}
    \]
\end{proof}

If $\varphi$ is supported in time in $(0, T)$, with $T \in [t^N, t^{N+1}\mathclose[$, then by summing \eqref{1Bus_aei1} over $n \in [\![0; N+1]\!]$, we obtain:
\begin{equation}
    \label{1Bus_aci2}
    \begin{aligned}
        & - \int_{0}^{T} \int_{\R} \biggl( \rho_{\Delta} \p_{t} \varphi 
        + \bF_\Delta \left( \rho_{\Delta} \right) \p_{x} \varphi \biggr) \d{x} \d{t}
        \leq \int_{0}^T q_\Delta(t) \varphi(y_\Delta(t), t) \d{t} + \bigO{\Delta x} + \bigO{\Delta t}.
    \end{aligned}
\end{equation}

\subsection{Compactness and convergence}
\label{1Bus_Section_compactness}

The remaining part of the reasoning consists in obtaining sufficient compactness for 
the sequence $(\rho_\Delta)_\Delta$ in order to pass to the limit in  
\eqref{1Bus_aei2}-\eqref{1Bus_aci2}. To doing so, we adapt techniques and results put 
forward by Towers in \cite{TowersOSLC}. With this in 
mind, we suppose in this section that the flux function, still bell-shaped, is such that
\begin{equation}
	\label{1Bus_uniform_concavity}
	\exists \mu > 0, \; \forall \rho \in [0,1], \quad f''(\rho) \leq - \mu.
\end{equation}

We denote for all $n \in \N$ and $j \in \Z$,
\[
	D_{j}^{n} := \max \left\{ \rho_{j-1/2}^{n} - \rho_{j+1/2}^{n}, 0 \right\}.
\]

We will also use the notation
\[
	\forall n \in \N, \quad \widehat{\Z}_{n+1} = \Z \backslash \{j_{n+1} - 2, j_{n+1} - 1, j_{n+1}, j_{n+1} + 1\}.
\]

In \cite{TowersOSLC}, the author dealt with a discontinuous in both time and space flux and the specific “vanishing viscosity” coupling at the interface. The 
discontinuity in space was localized along the curve $\{x=0\}$. Here, we deal with a smooth flux, but we have a flux constraint along the curve $\{x=y(t)\}$. The 
applicability of the technique of \cite{TowersOSLC} for our case with moving interface and flux-constrained interface coupling relies on the fact that 
one can derive a bound on $D_{j}^{n+1}$ as long as the interface does not enter the calculations for $D_{j}^{n+1}$ \textit{i.e.} as long as $j \in \widehat \Z_{n+1}$ in 
the case $j_{n+1} = j_n + 1$.

\begin{lemma}
    \label{1Bus_oslc1_lmm}
    Let $n \in \N$, $j \in \widehat{\Z}_{n+1}$, $\ds{a := \mu \frac{\Delta t}{ 4\Delta x}}$ and $\ds{\psi(x) := x - ax^2}$. Then
    \begin{equation}
        \label{1Bus_oslc1}
        D_j^{n+1} \leq \psi \left( \max\left\{D_{j-1}^n, D_j^n, D_{j+1}^n \right\} \right).
    \end{equation}
\end{lemma}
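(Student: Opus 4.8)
The statement is a one-step discrete "monotone decay" estimate for the quantity $D_j^{n+1} = \max\{\rho_{j-1/2}^{n+1} - \rho_{j+1/2}^{n+1}, 0\}$ at a site $j$ that is far enough from the interface that the scheme there is the ordinary three-point Engquist--Osher marching formula \eqref{1Bus_mf1}. Since $j \in \widehat{\Z}_{n+1}$, both $\rho_{j-1/2}^{n+1}$ and $\rho_{j+1/2}^{n+1}$ are produced by the standard formula applied to cells that are themselves unaffected by the interface at time level $n$, so in fact this is exactly the classical one-sided Lipschitz / OSLC-type estimate for monotone three-point schemes with a uniformly concave flux, and the plan is to reproduce Towers' argument from \cite{TowersOSLC} in this localized setting.

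First I would write $\rho_{j+1/2}^{n+1} = G(\rho_{j-1/2}^n, \rho_{j+1/2}^n, \rho_{j+3/2}^n)$ where $G(u,v,w) = v - \lambda(\eo(v,w) - \eo(u,v))$ is the standard EO update, which under the CFL condition \eqref{1Bus_cfl} is nondecreasing in each argument with $\partial_1 G, \partial_3 G \geq 0$ and $\partial_2 G \geq 0$; moreover $G$ is translation invariant (constants are stationary). From this one gets, by the standard incremental-coefficient manipulation, that $\rho_{j-1/2}^{n+1} - \rho_{j+1/2}^{n+1}$ can be written as a convex-type combination of the differences $\rho_{j-3/2}^n - \rho_{j-1/2}^n$, $\rho_{j-1/2}^n - \rho_{j+1/2}^n$, $\rho_{j+1/2}^n - \rho_{j+3/2}^n$ with nonnegative coefficients summing to $1$. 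Taking positive parts and using $D_{j-1}^n, D_j^n, D_{j+1}^n \le M := \max\{D_{j-1}^n, D_j^n, D_{j+1}^n\}$ gives $D_j^{n+1} \le M$ trivially; the point of the lemma is the strict quadratic gain $\psi(M) = M - aM^2$, which must come from the uniform concavity $f'' \le -\mu$ in \eqref{1Bus_uniform_concavity}.

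The heart of the matter is therefore extracting the $-aM^2$ term, and this is the step I expect to be the main obstacle — it is where the precise structure of the EO flux and the bound $f'' \le -\mu$ enter. The mechanism is the one from \cite{TowersOSLC}: the middle incremental coefficient multiplying $\rho_{j-1/2}^n - \rho_{j+1/2}^n$ is of the form $1 - \lambda\big(\eo_v(v,w) - \eo_u(u,v)\big)$ up to the analysis of the nonincreasing/nondecreasing splitting of $f$ in the EO flux, and for a uniformly concave flux a mean-value / Taylor estimate on the relevant difference quotients of $f$ produces a term bounded below by a multiple of $\mu \lambda (\rho_{j-1/2}^n - \rho_{j+1/2}^n)$, which after multiplying by $D_j^n$ itself and using $\Delta t/\Delta x = \lambda$ yields exactly the coefficient $a = \mu\Delta t/(4\Delta x)$. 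I would carry this out by: (i) recording monotonicity and the incremental form of $G$; (ii) isolating the contribution of the cell $j$ versus its neighbours and discarding the (nonnegative) neighbour contributions that only help; (iii) on the cell-$j$ contribution, using $f'' \le -\mu$ to convert a difference of slopes of $f$ into the gain $-a(D_j^n)^2$; (iv) since $\psi$ is increasing on the relevant range (CFL guarantees $aM \le \tfrac12 < 1$, so $\psi$ is monotone on $[0,1]$) and $D_j^n \le M$, concluding $D_j^{n+1} \le \psi(D_j^n) \le \psi(M)$. The localization to $j \in \widehat{\Z}_{n+1}$ is what guarantees all five cells $\mathcal P_{j-3/2}^n,\dots$ entering the computation of $\rho_{j\pm1/2}^{n+1}$ are governed by \eqref{1Bus_mf1}, so no $\god$/$q^n$ terms ever appear and the classical argument applies verbatim.
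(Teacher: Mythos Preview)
Your overall framing is right: for $j \in \widehat{\Z}_{n+1}$ both $\rho_{j\pm 1/2}^{n+1}$ come from the plain Engquist--Osher update, so one only needs the classical Towers estimate, and your CFL/monotonicity observations are correct. However, the mechanism you propose in steps (ii)--(iv) does not work. You aim for $D_j^{n+1} \le \psi(D_j^n)$ and then $\psi(D_j^n) \le \psi(M)$ by monotonicity of $\psi$. The first inequality is false: take $\rho_{j-1/2}^n = \rho_{j+1/2}^n$ (so $D_j^n = 0$) with $\rho_{j-3/2}^n$ large and $\rho_{j+3/2}^n$ small; one time step produces $\rho_{j-1/2}^{n+1} > \rho_{j+1/2}^{n+1}$, hence $D_j^{n+1} > 0 = \psi(0)$. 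The neighbour contributions enter with \emph{positive} coefficients, so they cannot be ``discarded''; the quadratic gain must be tied to whichever difference realises the maximum, not to the middle one alone.

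The paper's proof (following \cite{TowersOSLC}) proceeds differently. It writes $\eo(a,b)=q_+(a)+q_-(b)$ with $q_\pm$ built from $f$ truncated at $\overline\rho$, and uses the second-order Taylor bound
\[
q_+(b)-q_+(a) \le (b\wedge\overline\rho - a\wedge\overline\rho)\,f'(a\wedge\overline\rho) - \tfrac{\mu}{2}(b\wedge\overline\rho - a\wedge\overline\rho)^2
\]
(and its $q_-$ analogue). Substituting into $\rho_{j-1/2}^{n+1}-\rho_{j+1/2}^{n+1}$ produces a convex combination of certain differences \emph{minus} a sum of squares of those same differences; one then bounds the linear part by the maximum and the sum of squares below by the square of the maximum, obtaining $\psi(\max\{\cdots\})$ directly. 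This requires a case analysis on the position of $\overline\rho$ relative to $\rho_{j\pm 1/2}^n$, and is first carried out under the ordering assumption $\rho_{j-3/2}^n \ge \rho_{j-1/2}^n$, $\rho_{j+1/2}^n \ge \rho_{j+3/2}^n$; the general case is then reduced to this one by monotonicity of $G$, replacing $\rho_{j-3/2}^n$ by $\rho_{j-3/2}^n \vee \rho_{j-1/2}^n$ and $\rho_{j+3/2}^n$ by $\rho_{j+3/2}^n \wedge \rho_{j+1/2}^n$. Your sketch is missing both the quadratic Taylor remainder (this, not a slope difference, is where the $-aM^2$ actually originates) and this case analysis/reduction step.
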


\begin{proof}
    For the sake of completeness, the proof, largely inspired by \cite{TowersOSLC}, can be found in Appendix \ref{appendix}.
\end{proof}

\begin{remark}
    Fix $n \in \N$ and $j \in \widehat{\Z}_{n+1}$. Remark that if $D_j^n > 0$, then we can write that for some $\nu(j) \in \{j-1, j, j+1\}$, we have 
    \[
        \begin{aligned}
            D_j^{n+1} 
            & \leq D_{\nu(j)}^{n} - a \left( D_{\nu(j)}^{n} \right)^2 \\[5pt] 
            & = D_{\nu(j)}^{n} \left( 1 - a D_{\nu(j)}^{n}\right) 
            = D_{\nu(j)}^{n} \frac{1 - a^2 \left( D_{\nu(j)}^{n} \right)^2}{1 + a D_{\nu(j)}^{n}}
            \leq \frac{D_{\nu(j)}^{n}}{1 + a D_{\nu(j)}^{n}} 
            = \frac{1}{\frac{1}{D_{\nu(j)}^{n}} + a}.
        \end{aligned}
    \]
\end{remark}

\begin{corollary}
    \label{1Bus_oslc2_cor}
    Let $n \in \N$. Then the scheme \eqref{1Bus_mf1}~--~\eqref{1Bus_mf3} verifies the following one-sided Lipschitz condition:
    \begin{equation}
        \label{1Bus_oslc2}
        D_j^{n+1} \leq 
        \left\{
            \begin{array}{cll}
                \ds{\frac{1}{(n+1) a}} & \text{if} & j \leq j_{n+1} - 3 - n \\[10pt]
                \ds{\frac{1}{((j_{n+1} - 2) - j) a}} & \text{if} & j_{n+1} - 3 - n \leq j \leq j_{n+1} - 3 \\[15pt]
                \ds{\frac{1}{(j - (j_{n+1} + 1)) a}} & \text{if} & j_{n+1} + 2 \leq j \leq j_{n+1} + 2 + n \\[10pt]
                \ds{\frac{1}{(n+1) a}} & \text{if} & j \geq j_{n+1} + 2 + n.
            \end{array}
        \right.
    \end{equation}
\end{corollary}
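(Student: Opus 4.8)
The plan is to prove \eqref{1Bus_oslc2} by induction on $n$, the engine being Lemma \ref{1Bus_oslc1_lmm} recast, as in the Remark that follows it, in the form: for $j \in \widehat{\Z}_{n+1}$,
\[
    \frac{1}{D_j^{n+1}} \ \geq\ a + \frac{1}{D_{\nu(j)}^{n}} \quad \text{for some } \nu(j) \in \{j-1, j, j+1\},
\]
with the conventions $1/0 = +\infty$ and $D_j^{n+1} = 0$ when $\max\{D_{j-1}^n, D_j^n, D_{j+1}^n\} = 0$. This recasting is licit because, under \eqref{1Bus_cfl} and by Proposition \ref{1Bus_scheme_stability_pp}, every $D_j^n$ lies in $[0,1] \subset [0, 1/(2a)]$, the interval on which $\psi$ is nondecreasing, while $\psi \leq \psi(1/(2a)) = 1/(4a)$ everywhere. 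Iterating this inequality $m$ times gives $1/D_j^{n+1} \geq ma$, hence $D_j^{n+1} \leq 1/(ma)$, \emph{provided each step is legitimate}, i.e. provided that at the $k$-th step the index to which Lemma \ref{1Bus_oslc1_lmm} is applied still lies in $\widehat{\Z}_{n+2-k}$. Since $k-1$ steps move the index by at most $k-1$, it suffices that the cone $\{i : |i-j| \leq k-1\}$ be contained in $\widehat{\Z}_{n+2-k}$ for $k = 1, \dots, m$; and if the cone remains admissible only down to some intermediate level, one still gets $D_j^{n+1} \leq 1/(ma)$ by closing with the induction hypothesis there, using the elementary inequality $\psi(1/(ka)) = (k-1)/(k^2 a) \leq 1/((k+1)a)$.

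It then remains to choose, for each of the four ranges of $j$ in \eqref{1Bus_oslc2}, the right $m$ and to check that the associated cone avoids the band $\{j_\ell - 2, \dots, j_\ell + 1\}$ at every intermediate level $\ell$. For $j \leq j_{n+1} - 3 - n$ (and, symmetrically, $j \geq j_{n+1} + 2 + n$) one takes $m = n+1$, tracing the estimate back to the discretised datum $\rho_\Delta^0$, whose one-sided differences lie in $[0,1]$; for the two middle ranges one takes $m = (j_{n+1} - 2) - j$, resp. $m = j - (j_{n+1} + 1)$, the number of cells separating $j$ from the band at the current level. Here the mesh geometry enters: in the present traffic setting the interface index is nondecreasing with $j_{\ell+1} \in \{j_\ell, j_\ell + 1\}$, and the CFL condition \eqref{1Bus_cfl} forces $|y^{\ell+1} - y^\ell| \leq L\Delta t \leq \Delta x/2$, so the band drifts by at most one index over any two consecutive steps. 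The base case $n = 0$ is immediate, since $D_j^1 \leq \psi(\max\{D^0_{j-1}, D^0_j, D^0_{j+1}\}) \leq 1/(4a) \leq 1/a$, the common value of the four right-hand sides of \eqref{1Bus_oslc2} when $n = 0$.

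The delicate point, and the one I would spend the most care on, is the bookkeeping at the steps $\ell \to \ell + 1$ with $j_{\ell+1} = j_\ell + 1$, where the interface crosses a mesh line: there the cone and the band move in the unfavourable relative direction, and a naive count loses a factor close to two on the side the interface is leaving. What rescues the estimate is structural rather than combinatorial: at such a step the scheme was built (see \eqref{1Bus_mf2} and the comment after it) so that $\rho^{\ell+1}_{j_{\ell+1} - 3/2} = \rho^{\ell+1}_{j_{\ell+1} - 1/2}$, whence $D^{\ell+1}_{j_{\ell+1} - 1} = 0$; this vanishing of the one-sided slope on the cell just vacated by the interface is exactly what lets the distance in \eqref{1Bus_oslc2} be measured against the \emph{current} index $j_{n+1}$ rather than against an index growing with the total displacement of the interface. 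Concretely, in the inductive step I would follow the admissible cone backwards only until it reaches level $0$ or abuts the band, using this reset to absorb the column-changing steps, and then close with the induction hypothesis at that level together with $\psi(1/(ka)) \leq 1/((k+1)a)$. The remaining geometric configurations of Section \ref{1Bus_Section_mesh_scheme} are treated identically, with the roles of the two sides of the band exchanged.
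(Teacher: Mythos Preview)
Your engine is exactly the paper's: iterate the inequality $1/D_j^{n+1}\geq a+1/D_{\nu(j)}^n$ from the Remark after Lemma \ref{1Bus_oslc1_lmm}, packaged there as an induction on the depth $k$ with the implication $\min\{n+1,\,j-(j_{n+1}+1)\}\geq k\Rightarrow D_j^{n+1}\leq 1/(ka)$ on the right of the interface; the paper then asserts the left side is ``very similar''. You are right to single out the left side as the delicate one in the traffic regime $j_{\ell+1}\in\{j_\ell,j_\ell+1\}$: stepping back one level, the cone can drift right by one while the left edge $j_\ell-2$ of the excluded band drifts \emph{left} by one at a column-changing step, so the gap can shrink by two and the symmetric induction loses a unit. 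That observation is sharper than what the paper writes.

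Where your sketch has a genuine gap is the rescue mechanism. The reset $\rho^{\ell+1}_{j_{\ell+1}-3/2}=\rho^{\ell+1}_{j_{\ell+1}-1/2}$, hence $D^{\ell+1}_{j_{\ell+1}-1}=0$, concerns an index strictly inside the excluded band $\{j_{\ell+1}-2,\dots,j_{\ell+1}+1\}$ and therefore never appears among the three entries $D^{\ell}_{j-1},D^{\ell}_j,D^{\ell}_{j+1}$ produced when Lemma \ref{1Bus_oslc1_lmm} is applied at an admissible left-side index $j\leq j_{\ell+1}-3$. Concretely, take $j=j_{n+1}-4$ with $j_{n+1}=j_n+1$: one backward step yields the triple $D^n_{j_n-4},D^n_{j_n-3},D^n_{j_n-2}$, and the obstruction is $D^n_{j_n-2}$, the leftmost band index at level $n$, which is neither in $\widehat{\Z}_n$ nor zeroed by any reset (the reset at level $n$, when $j_n=j_{n-1}+1$, kills $D^n_{j_n-1}$, not $D^n_{j_n-2}$). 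Your plan ``follow the cone until it abuts the band, use the reset to absorb the column-changing steps'' does not say how this term is controlled, and without such control the bound $D^{n+1}_{j_{n+1}-4}\leq 1/(2a)$ does not follow from the induction hypothesis. A complete left-side argument needs a direct estimate on $D^{\ell}_{j_\ell-2}$ extracted from the explicit merged-cell formula \eqref{1Bus_mf2} (comparing $\rho^{\ell}_{j_\ell-5/2}$, given by the standard scheme, with $\rho^{\ell}_{j_\ell-3/2}=\rho^{\ell}_{j_\ell-1/2}$, given by $H^{\ell-1}_{j_{\ell-1}-1}$), not merely the equality of the two merged values.
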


\begin{figure}[!htp]
	\begin{center}
		\includegraphics[scale = 0.20]{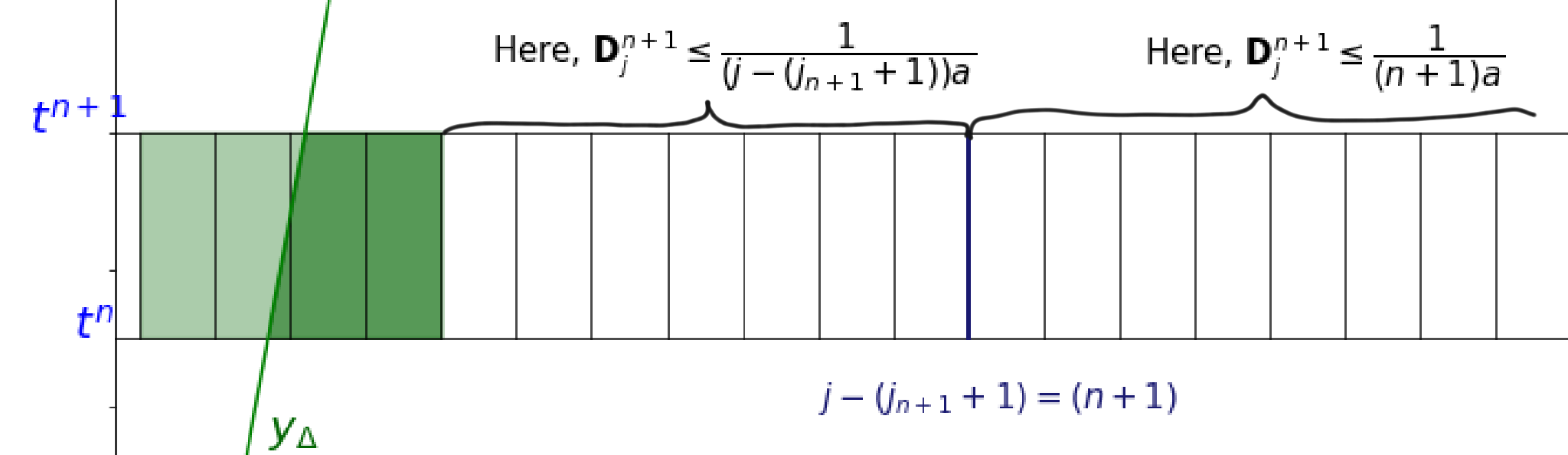}
	\end{center}
    \vspace*{-5mm}
	\caption{Illustration of the OSL bound \eqref{1Bus_oslc2}.}
	\label{fig3}
\end{figure}

\begin{proof}
    Fix $n \in \N$. We only prove \eqref{1Bus_oslc2} in the cases $j \geq j_{n+1} + 2$. The reasoning for the cases $j \leq j_0 - 3$ is very similar. 
    Let us first prove by induction on $k \in \N^*$ that
    \begin{equation}
        \label{1Bus_oslc21}
        \forall k \in \N^*, \; \forall j \in \Z, \quad \min \{n+1, j - (j_{n+1} + 1)\} \geq k \implies D_j^{n+1} \leq \frac{1}{k a}.
    \end{equation}

    Inequality \eqref{1Bus_oslc21} holds if $k=1$. Indeed, if $k=1$, then $j \geq j_{n+1} + 2$ \textit{i.e.} $j \in \widehat{\Z}_{n+1}$. By \eqref{1Bus_oslc1},
    \[
        \exists \nu_j \in \{j-1, j, j+1 \}, \quad D_j^{n+1} \leq D_{\nu_j}^{n} - a \left( D_{\nu_j}^{n} \right)^2.
    \]

    If $D_{\nu_j}^{n} = 0$, then $D_j^{n+1} = 0 \leq 1/a$. Otherwise, we can write:
    \[
        D_j^{n+1} \leq \frac{1}{\frac{1}{D_{\nu_j}^{n}} + a} \leq \frac{1}{a} = \frac{1}{k a}.
    \]

    Now, let us assume that \eqref{1Bus_oslc21} holds for some integer $k \in \N^*$ and suppose that $\min \{n+1, j - (j_{n+1} + 1) \} \geq k + 1$. Again, by 
    \eqref{1Bus_oslc1},
    \[
        \exists \nu_j \in \{j-1, j, j+1 \}, \quad D_j^{n+1} \leq D_{\nu_j}^{n} - a \left( D_{\nu_j}^{n} \right)^2.
    \]

    Since
    \[
        n \geq k \quad \text{and} \quad \nu_j - (j_{n} + 1) \geq (j - 1) - (j_{n+1} + 1) = j - (j_{n+1} + 1) - 1 \geq k,
    \]

    we deduce that $\min \{n, j - (j_{n} + 1) \} \geq k$, hence, using the induction property:
    \[
        D_j^{n+1} \leq \frac{1}{\frac{1}{D_{\nu_j}^{n}} + a} \leq \frac{1}{(k+1)a},
    \]

    which concludes the induction argument. Estimates \eqref{1Bus_oslc2} in the cases $j \geq j_{n+1} + 2$ follow for suitable choices of $k$ in 
    \eqref{1Bus_oslc21}. 
\end{proof}

\begin{corollary}[Localized $\BV$ estimates] 
    \label{1Bus_oslc3_cor}
    Fix $0 < \eps < X$ and suppose that $3 \Delta x  \leq \eps$ and that 
    $\ds{t^{n+1} \geq \frac{\eps}{2 L}}$. Then there exists a constant 
    $\ds{\Lambda = \Lambda \left( \frac{1}{\eps}, X \right)}$, nondecreasing with respect to its arguments such that
    \begin{equation}
        \label{1Bus_oslc31}
        \TV\left(\rho_\Delta(t^{n+1}) \; \1_{\mathopen]y^{n+1} + \eps, y^{n+1} + X\mathclose[} \right) \leq \Lambda,
    \end{equation}

    and 
    \begin{equation}
        \label{1Bus_oslc32}
        \int_{y^{n+1} + \eps}^{y^{n+1} + X} \left| \rho_\Delta (x, t^{n+2}) - \rho_\Delta (x, t^{n+1)}) \right| \d{x} 
        \leq 2\Delta x + L \left( 2 \Lambda + 1 \right) \Delta t.
    \end{equation}
\end{corollary}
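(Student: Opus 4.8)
The plan is to derive both estimates from the one-sided Lipschitz bound of Corollary~\ref{1Bus_oslc2_cor}, using the elementary fact that, since $\rho_\Delta$ takes values in $[0,1]$, on any interval the total variation of the (piecewise constant) restriction of $\rho_\Delta(t^{n+1})$ is at most twice the sum of its downward jumps plus a boundary term bounded by $3$. The downward jumps are exactly the quantities $D_j^{n+1}$ attached to the interfaces of the modified mesh that lie inside the window, so everything reduces to estimating $\sum_{j\in\mathcal J} D_j^{n+1}$, where $\mathcal J$ indexes those interfaces.

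For \eqref{1Bus_oslc31} I would first observe that, away from $y^{n+1}$ the mesh is the plain Cartesian grid, so $\mathcal J$ has at most $\lceil X/\Delta x\rceil + C$ elements and --- this is where the hypothesis $3\Delta x\le\eps$ enters --- every $j\in\mathcal J$ satisfies $j-(j_{n+1}+1)\ge \eps/\Delta x - C$ (a window to the left of $y^{n+1}$ does not occur here and would be treated symmetrically). Corollary~\ref{1Bus_oslc2_cor} then gives $D_j^{n+1}\le \bigl(\min\{n+1,\,j-(j_{n+1}+1)\}\,a\bigr)^{-1}$; splitting $\mathcal J$ according to which of the two quantities realizes the minimum, the indices with $j-(j_{n+1}+1)\le n+1$ produce a harmonic sum
\[
\frac1a\sum_{\eps/\Delta x - C\,\le\, m\,\le\, X/\Delta x}\frac1m \;\le\; \frac1a\Bigl(\ln\frac{X}{\eps}+C\Bigr),
\]
which is bounded independently of $\Delta x$ because $a=\mu\lambda/4$ is fixed; for the remaining indices $D_j^{n+1}\le 1/((n+1)a)$, and here the second hypothesis $t^{n+1}\ge \eps/(2L)$, i.e.\ $n+1\ge \eps/(2L\lambda\Delta x)$, forces $1/((n+1)a)\le 8L\Delta x/(\mu\eps)$, so that block contributes at most $(X/\Delta x + C)\cdot 8L\Delta x/(\mu\eps)=8LX/(\mu\eps)+O(\Delta x)$. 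Adding up, the claimed $\Lambda$ can be taken of the form $C\bigl(\tfrac1{\lambda\mu}\ln(X/\eps)+\tfrac{LX}{\mu\eps}+1\bigr)$, manifestly nondecreasing in $X$ and in $1/\eps$.

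For \eqref{1Bus_oslc32}, the point is that by the CFL condition \eqref{1Bus_cfl} and $3\Delta x\le\eps$ the interface stays at distance at least $2\Delta x$ from the window throughout $[t^{n+1},t^{n+2}]$, so on every cell meeting $]y^{n+1}+\eps,y^{n+1}+X[$ the update is the ordinary monotone three-point marching formula \eqref{1Bus_mf1} with Engquist--Osher flux, with no interference from the mesh reorganization near $y^{n+1}$. Then I would write each cell increment $\rho_\Delta(\cdot,t^{n+2})-\rho_\Delta(\cdot,t^{n+1})$ as $-\lambda$ times a difference of consecutive Engquist--Osher fluxes, bound the latter by $\|f'\|_{\L{\infty}}$ times the sum of the two adjacent spatial jumps of $\rho_\Delta(t^{n+1})$, and sum over the window (each jump counted at most twice), obtaining a bound $2\|f'\|_{\L{\infty}}\,\Lambda\,\Delta t$ from \eqref{1Bus_oslc31}; the two partial cells at the ends of the window account for the term $2\Delta x$ and the extra boundary jump(s) for the $+1$, so that with $\|f'\|_{\L{\infty}}\le L$ we reach \eqref{1Bus_oslc32}.

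I expect the main difficulty to be purely bookkeeping: one must carefully line up the cell indices of the modified mesh (including the index shift built into the definition of $\rho_\Delta$ and the parallelogram cells around the interface) with the spatial window, and verify that $3\Delta x\le\eps$ keeps every relevant index away from $j_{n+1}$ while $t^{n+1}\ge\eps/(2L)$ makes the uniform cap $1/((n+1)a)$ negligible, so that Corollary~\ref{1Bus_oslc2_cor} is always effective on $\mathcal J$. Once that is in place the estimates are routine, the only genuinely quantitative ingredient being the convergence of the harmonic partial sum, which is what produces the $\ln(X/\eps)$ in $\Lambda$.
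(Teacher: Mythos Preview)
Your proposal is correct, and for \eqref{1Bus_oslc32} it coincides with the paper's argument. For \eqref{1Bus_oslc31}, however, you take a more elaborate route than the paper. You split $\mathcal J$ according to which of $n+1$ and $j-(j_{n+1}+1)$ realizes the minimum in Corollary~\ref{1Bus_oslc2_cor}, obtaining a harmonic sum $\sim a^{-1}\ln(X/\eps)$ on one block and a uniform cap $\sim LX/(\mu\eps)$ on the other. The paper instead observes that \emph{both} $n+1$ and $j-(j_{n+1}+1)$ are bounded below by the same quantity $k_{n+1}-(j_{n+1}+1)\sim\eps/\Delta x$ (the first from $t^{n+1}\ge\eps/(2L)$, the second from $3\Delta x\le\eps$), so the minimum is too; this gives a single uniform bound $D_j^{n+1}\le (a(k_{n+1}-(j_{n+1}+1)))^{-1}$ over the whole window, and summing $\lesssim X/\Delta x$ such terms yields directly $\Lambda=1+6X/(a\eps)$. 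Your approach extracts a slightly sharper logarithmic contribution from the near-interface block, but since the far block still produces an $X/\eps$ term the gain is cosmetic, while the paper's argument avoids the case split entirely.
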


Note that we have the same bounds for the quantities:
\[ 
    \TV\left(\rho_\Delta(t^{n+1}) \; \1_{\mathopen]y^{n+1} - X, y^{n+1} - \eps\mathclose[} \right) \; \; \text{and} \; \;
    \int_{y^{n+1} - X}^{y^{n+1} -\eps} \left| \rho_\Delta (x, t^{n+2}) - \rho_\Delta (x, t^{n+1)}) \right| \d{x}.
\] 

\begin{proof}
    Let $k_{n+1}, J_{n+1} \in \Z$ such that $y^{n+1} + \eps \in \mathopen]\chi_{k_{n+1}}^{n+1}, \chi_{k_{n+1}}^{n+1} + \Delta x\mathclose[$ and 
    $y^{n+1} + X \in \mathopen]\chi_{J_{n+1}}^{n+1}, \chi_{J_{n+1}}^{n+1} + \Delta x\mathclose[$. We have:
    \[
        \begin{aligned}
            \TV\left(\rho_\Delta(t^{n+1}) \; \1_{\mathopen]y^{n+1} + \eps, y^{n+1} + X\mathclose[} \right) 
            & = \sum_{j=k_{n+1} + 1}^{J_{n+1}} |\rho_{j+1/2}^{n+1} - \rho_{j-1/2}^{n+1}| \\[5pt]
            & = 2 \sum_{j=k_{n+1} + 1}^{J_{n+1}} D_{j}^{n+1} - \sum_{j=k_{n+1} + 1}^{J_{n+1}} (\rho_{j+1/2}^{n+1}-\rho_{j-1/2}^{n+1}) \\[5pt]
            & = 2 \sum_{j=k_{n+1} + 1}^{J_{n+1}} D_{j}^{n+1} - (\rho_{J_{n+1} - 1/2}^{n+1} - \rho_{k_{n+1} + 1/2}^{n+1}) 
            \leq 1 + 2 \sum_{j=k_{n+1} + 1}^{J_{n+1}} D_{j}^{n+1}.
        \end{aligned}
    \]

    Now, for all $j \geq k_{n+1} + 1$, we have
    \[
        \begin{aligned}
            j - (j_{n+1} + 1)
            \geq \frac{(k_{n+1} + 1) - (j_{n+1} + 1))\Delta x}{\Delta x}
            & = \frac{(\chi_{k_{n+1}}^{n+1} + \Delta x) - \chi_{j_{n+1}}^{n+1}}{^\Delta x} \\[5pt]
            & \geq \frac{(y^{n+1} + \eps) - (y^{n+1} + 2\Delta x)}{\Delta x}
            = \frac{\eps}{\Delta x} - 2 \geq 1.
        \end{aligned}
    \]
    
    Lemma \ref{1Bus_oslc2} ensures that 
    \[
        \TV\left(\rho_\Delta(t^{n+1}) \; \1_{\mathopen]y^{n+1} + \eps, y^{n+1} + X\mathclose[} \right) 
        \leq 1 + \frac{2}{a} \sum_{j=k_{n+1} + 1}^{J_{n+1}} \frac{1}{\min \{n+1, j - (j_{n+1} + 1) \}}.
    \]

    However, we also have:
    \[
        n + 1
        = \frac{t^{n+1}}{\Delta t} 
        \geq \frac{\eps}{2 L \Delta t}
        \geq \frac{\eps}{\Delta x}
        = \frac{(y^{n+1} + \eps) - y^{n+1}}{\Delta x}
        \geq \frac{\chi_{k_{n+1}}^{n+1} - (\chi_{j_{n+1}}^{n+1} + \Delta x)}{\Delta x} = k_{n+1} - (j_{n+1} + 1).
    \]

    We deduce that for all 
    $j \in [\![k_{n+1} +1; J_{n+1}]\!]$, we have $\min \{n+1, j - (j_{n+1} + 1) \} \geq k_{n+1} - (j_{n+1} + 1)$. Therefore,
    \[
        \begin{aligned}
            \sum_{j=k_{n+1} + 1}^{J_{n+1}} |\rho_{j+1/2}^{n+1} - \rho_{j-1/2}^{n+1}|
            & \leq  1 + \frac{2}{a} \times \left( \frac{J_{n+1} - k_{n+1}}{k_{n+1} - (j_{n+1} + 1)} \right) \\[5pt]
            & \leq 1 + \frac{2}{a} \times \left( \frac{X - \eps + \Delta x}{\eps - 2 \Delta x} \right) \\[5pt]
            & \leq \Lambda, \quad \Lambda:= 1 + \frac{6 X}{a \eps},
        \end{aligned}
    \]

    which is exactly \eqref{1Bus_oslc31}. Then,
    \[
        \begin{aligned}
            & \int_{y^{n+1} + \eps}^{y^{n+1} + X} \left| \rho_\Delta (x, t^{n+2}) - \rho_\Delta (x, t^{n+1)}) \right| \d{x} \\[5pt]
            & \leq 2 \Delta x + \sum_{j=k_{n+1} + 1}^{J_{n+1}} |\rho_{j+1/2}^{n+2} - \rho_{j+1/2}^{n+1}| \Delta x \\[5pt]
            & \leq 2 \Delta x + \|f'\|_{\L{\infty}} \left( \sum_{j=k_{n+1} + 1}^{J_{n+1}} |\rho_{j+3/2}^{n+1} - \rho_{j+1/2}^{n+1}|
            + \sum_{j=k_{n+1} + 1}^{J_{n+1}} |\rho_{j+1/2}^{n+1} - \rho_{j-1/2}^{n+1}| \right) \Delta t \\[5pt]
            & \leq 2\Delta x + L \left( 2 \Lambda + 1 \right) \Delta t,
        \end{aligned}
    \]

    concluding the proof.
\end{proof}

\begin{theorem}
    \label{1Bus_convergence_th}
    Fix $\rho_o \in \L{\infty}(\R, [0,1])$, $y \in \Wloc{1}{\infty}(\mathopen]0, 
    +\infty\mathclose[, \R)$ and 
    $q \in \Lloc{\infty}(\mathopen]0, +\infty\mathclose[, \R)$. 
    Suppose that $f \in \Ck{2}([0,1], \R^+)$ satisfies \eqref{bell_shaped}-\eqref
    {1Bus_uniform_concavity} and that $y$ is nondecreasing. Then as $\Delta \to 0$ 
    while satisfying the CFL condition 
    \eqref{1Bus_cfl}, $(\rho_\Delta)_\Delta$ converges a.e. on $\Omega$ to the 
    admissible entropy solution to \eqref{1Bus}.
\end{theorem}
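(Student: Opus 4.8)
The plan is to assemble the a priori estimates proved above into a compactness statement valid away from the interface, to pass to the limit in the approximate formulations \eqref{1Bus_aei2}--\eqref{1Bus_aci2}, and finally to invoke the uniqueness theorem to turn subsequential convergence into convergence of the whole family. Throughout, recall that $\lambda = \Delta t/\Delta x$ is fixed, so $\Delta x \to 0$ and $\Delta t \to 0$ simultaneously, and that $y_\Delta \to y$ in $\Lloc{\infty}$, $s_\Delta \to \dot y$ and $q_\Delta \to q$ in $\Lloc{1}$, while $\rho_\Delta^0 \to \rho_o$ in $\Lloc{1}(\R)$, as established in Section~\ref{1Bus_Section_mesh_scheme}.

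\textbf{Step 1: uniform bounds and localized compactness.} By Proposition~\ref{1Bus_scheme_stability_pp}, $(\rho_\Delta)_\Delta$ is bounded in $\L{\infty}(\Omega,[0,1])$. Fix a compact set $K \subset \Omega \backslash \Gamma$; since $\Omega = \R \times \mathopen]0,+\infty\mathclose[$ is open and $\Gamma$ is closed, $K$ has positive minimal time and positive distance to $\Gamma$, so for $\Delta$ small enough $K$ is contained in a region of the form $\{\eps < |x - y_\Delta(t)| < X,\ t^{n+1} \geq \eps/(2L)\}$. Corollary~\ref{1Bus_oslc3_cor} then yields a bound on $\TV(\rho_\Delta(t^{n+1}) \1_{\{\eps < |x - y^{n+1}| < X\}})$ and the companion $\L{1}$ time-translation estimate \eqref{1Bus_oslc32}, both uniform in $\Delta$. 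Hence $(\rho_\Delta)_\Delta$ has uniformly controlled space and time variation on $K$, and Kolmogorov's compactness criterion (or Helly's selection theorem slice by slice) provides a subsequence converging in $\L{1}(K)$. Exhausting $\Omega \backslash \Gamma$ by countably many such compacts and extracting diagonally, we obtain a subsequence, still written $(\rho_\Delta)_\Delta$, and a limit $\rho \in \L{\infty}(\Omega,[0,1])$ with $\rho_\Delta \to \rho$ in $\Lloc{1}(\Omega \backslash \Gamma)$ and a.e. on $\Omega \backslash \Gamma$; since $\Gamma$ is Lebesgue-negligible in $\Omega$, this is a.e. convergence on $\Omega$.

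\textbf{Step 2: passage to the limit.} We check that $\rho$ satisfies Definition~\ref{1Bus_def1}. Fix $\varphi \in \Cc{\infty}(\overline{\Omega},\R^+)$ with time support in $[0,T]$ and $\kappa \in [0,1]$, and start from \eqref{1Bus_aei2}. Its right-hand side is $\bigO{\Delta x} + \bigO{\Delta t} \to 0$. On the left-hand side, $\rho_\Delta \to \rho$ a.e. and the consistency and Lipschitz continuity of the Engquist--Osher and Godunov fluxes give $\Phi_\Delta(\rho_\Delta,\kappa) \to \Phi(\rho,\kappa)$ a.e. on $\Omega \backslash \Gamma$, so dominated convergence handles the space-time integral and the initial term (using $\rho_\Delta^0 \to \rho_o$ in $\Lloc{1}$). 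For the constraint term, passing if needed to a further subsequence along which $s_\Delta \to \dot y$ and $q_\Delta \to q$ a.e., and using that $(s,q) \mapsto \cR_s(\kappa,q)$ is continuous and bounded on the relevant range together with $y_\Delta \to y$ uniformly on $[0,T]$, we get $\cR_{s_\Delta(t)}(\kappa,q_\Delta(t))\,\varphi(y_\Delta(t),t) \to \cR_{\dot y(t)}(\kappa,q(t))\,\varphi(y(t),t)$ boundedly a.e., whence dominated convergence again. This yields \eqref{1Bus_ei1}. Proceeding identically with \eqref{1Bus_aci2}, and using $\bF_\Delta(\rho_\Delta) \to f(\rho)$ a.e. and $q_\Delta(t)\varphi(y_\Delta(t),t) \to q(t)\varphi(y(t),t)$, we obtain \eqref{1Bus_ci1}. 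Therefore $\rho$ is an admissible entropy solution to \eqref{1Bus}.

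\textbf{Step 3: conclusion, and the main obstacle.} By Propositions~\ref{1Bus_def12_pp}--\ref{1Bus_def21_pp}, $\rho$ is also a $\cG_{\dot y}(q)$-entropy solution, and Theorem~\ref{1Bus_stability_th1} shows such a solution is unique. Hence the limit does not depend on the extracted subsequence, and a standard contradiction argument upgrades the above to convergence of the whole family $(\rho_\Delta)_\Delta$ to $\rho$ a.e. on $\Omega$. The delicate point is Step~1: the $\BV$ bounds of Corollary~\ref{1Bus_oslc3_cor} degenerate as one approaches $\Gamma$ (and as $t \to 0^+$), so no global $\BV$ bound exists and strong compactness is only available on compact subsets of $\Omega \backslash \Gamma$; the resolution is to organize the diagonal extraction over such compacts and to exploit $|\Gamma| = 0$ to recover a.e. convergence on all of $\Omega$. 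Once a.e. convergence is secured, the rest (flux consistency, convergence of $s_\Delta, q_\Delta, y_\Delta$, vanishing of the $\bigO{\Delta x} + \bigO{\Delta t}$ remainders) is routine.
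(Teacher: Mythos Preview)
Your proof is correct and follows essentially the same route as the paper's: localized $\BV$ bounds from Corollary~\ref{1Bus_oslc3_cor} together with the $\L{\infty}$ bound give compactness on compact subsets of $\Omega \backslash \Gamma$, a diagonal extraction (using $|\Gamma|=0$) upgrades this to a.e. convergence on $\Omega$, one passes to the limit in \eqref{1Bus_aei2}--\eqref{1Bus_aci2}, and uniqueness (Theorem~\ref{1Bus_stability_th1}) eliminates the subsequence. Your write-up is in fact more explicit than the paper's on the passage to the limit (consistency of $\Phi_\Delta$ and $\bF_\Delta$, convergence of the boundary term via $s_\Delta \to \dot y$, $q_\Delta \to q$, $y_\Delta \to y$), which the paper leaves implicit.
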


\begin{proof}
    Fix $n \in \N^*$. The uniform convergence of $(y_\Delta)_\Delta$ to $y$, coupled with the $\BV$ bounds \eqref{1Bus_oslc31}-\eqref{1Bus_oslc32} 
    and the uniform $\L{\infty}$ bound \eqref{1Bus_scheme_stability} provide (up to a subsequence) a.e. convergence for the sequence $(\rho_\Delta)_\Delta$ 
    in any rectangular bounded domains of the open subset 
    \[ 
        O_n := \{(x, t) \in \Omega \; : \; |x - y(t)| > 1/n \},
    \]

    see \cite[Appendix A]{HRBook}. The a.e. convergence on any compact subsets of $\Omega_n$ follows by a classical covering argument. Then a diagonal procedure 
    provides the a.e. convergence on any compact subsets of 
    $O := \{(x, t) \in \Omega \; : \; x \neq y(t) \}$. 
    A further extraction yields the a.e. convergence on $\Omega$.

    Equipped with the convergence of $(\rho_\Delta)_\Delta$ to $\rho$, we let $\Delta \to 0$ in \eqref{1Bus_aei2} and \eqref{1Bus_aci2} to establish that 
    $\rho$ is an admissible entropy solution to \eqref{1Bus}. By uniqueness, the whole sequence converges to $\rho$, which proves the theorem.
\end{proof}

\begin{corollary}
    \label{1Bus_wp_cor}
    Fix $\rho_o \in \L{\infty}(\R; [0, 1])$, $y \in \Wloc{1}{\infty}(\mathopen]0, +\infty\mathclose[), \dot y \geq 0$ and $q \in \Lloc{\infty}(\mathopen]0, +\infty\mathclose[), q \geq 0$. Suppose that 
    $f \in \Ck{2}([0,1])$ satisfies \eqref{bell_shaped}-\eqref{1Bus_uniform_concavity}. Then Problem \eqref{1Bus} admits a unique admissible entropy solution.
\end{corollary}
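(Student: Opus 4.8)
The plan is to deduce this corollary by assembling the well-posedness pieces of Sections~\ref{1Bus_Section_uniqueness}--\ref{1Bus_Section_existence}, checking only that the hypotheses line up. For \emph{existence}, I would invoke Theorem~\ref{1Bus_convergence_th}: the assumption $\dot y \ge 0$ together with $y \in \Wloc{1}{\infty}(\mathopen]0,+\infty\mathclose[)$ forces $y$ to be nondecreasing, $f \in \Ck{2}([0,1])$ is assumed to satisfy \eqref{bell_shaped}--\eqref{1Bus_uniform_concavity}, and $q \ge 0$ with $\dot y \ge 0$ gives \eqref{1Bus_assumption_constraint2}, hence \eqref{1Bus_assumption_constraint3}, which is what the construction and the stability of the scheme actually rely on. Theorem~\ref{1Bus_convergence_th} then yields a family $(\rho_\Delta)_\Delta$ converging a.e.\ on $\Omega$, under the CFL condition \eqref{1Bus_cfl}, to a limit $\rho$ that is an admissible entropy solution of \eqref{1Bus}; this settles existence.

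The only point needing a word is Assumption~\eqref{1Bus_assumption_constraint}: for $\dot y \ge 0$ it also asks that $q(t) < \max F_{\dot y(t)}$ for a.e.\ $t$, so that the pair $\bigl(\widehat{\rho}_{\dot y(t)}(q(t)), \widecheck{\rho}_{\dot y(t)}(q(t))\bigr)$ and hence the germ $\cG_{\dot y(t)}(q(t))$ of Definition~\ref{1Bus_def_germ} make sense, whereas the corollary only assumes $q \ge 0$. On the (measurable) time set $\{t : q(t) \ge \max F_{\dot y(t)}\}$, however, the flux constraint in \eqref{1Bus} is vacuous and the reminder term $\cR_{\dot y(t)}(\cdot, q(t))$ in \eqref{1Bus_ei1} vanishes identically; there \eqref{1Bus} reduces to the plain Kruzhkov problem and $\cG_{\dot y(t)}(q(t))$ is to be read as the $\L{1}$-dissipative germ of the unconstrained conservation law ($\cG^1_{\dot y(t)}(q(t))$ being empty), so that Lemma~\ref{1Bus_germ_dissipativity}, the equivalence of Definitions~\ref{1Bus_def1}--\ref{1Bus_def2}, and Theorem~\ref{1Bus_stability_th1} all carry over verbatim. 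It is therefore no loss of generality to assume \eqref{1Bus_assumption_constraint}, which I do.

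For \emph{uniqueness}, I would combine Propositions~\ref{1Bus_def12_pp}--\ref{1Bus_def21_pp}, which identify the admissible entropy solutions of \eqref{1Bus} with the $\cG_{\dot y}(q)$-entropy solutions, and Theorem~\ref{1Bus_stability_th1} applied with $\sigma_o = \rho_o$ and $r = q$: for two solutions $\rho, \sigma$ it gives $\|\rho(T) - \sigma(T)\|_{\L{1}(\R)} \le 0$ for every $T > 0$, whence $\rho = \sigma$ a.e.\ on $\Omega$. Together with the previous paragraphs this proves the corollary. The argument is essentially bookkeeping; I expect the only mildly delicate point to be the reduction of the second paragraph --- confirming the measurability of $t \mapsto \max F_{\dot y(t)}$ (immediate from the Lipschitz continuity of $s \mapsto \max F_s$ and the measurability of $\dot y$) and that extending the germ formalism to $q(t) \ge \max F_{\dot y(t)}$ leaves every statement used above unchanged.
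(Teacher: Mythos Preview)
Your proposal is correct and follows exactly the paper's own proof, which consists of the single sentence ``Existence comes from Theorem~\ref{1Bus_convergence_th} while uniqueness was established by Theorem~\ref{1Bus_stability_th1}.'' Your second paragraph, addressing the case $q(t) \ge \max F_{\dot y(t)}$ not covered by \eqref{1Bus_assumption_constraint}, is more careful than the paper itself, which tacitly assumes \eqref{1Bus_assumption_constraint} throughout without commenting on this borderline case in the corollary.
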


\begin{proof}
    Existence comes from Theorem \ref{1Bus_convergence_th} while uniqueness was established by Theorem \ref{1Bus_stability_th1}.
\end{proof}

\section{Well-posedness for the multiple trajectory problem}
\label{multibus_Section_wp}

We now get back to the original problem \eqref{multibus}. Let us detail the organization of this section. First, we construct a partition of the unity to 
reduce the study of \eqref{multibus} to an assembling of several local studies of \eqref{1Bus}, see Section \ref{multibus_Section_partition_unity}. Using the 
definition based on germs, analogous to Definition \ref{1Bus_def2}, we will prove a stability estimate, leading to uniqueness, see Theorem 
\ref{multibus_stability_th}. Then in Section \ref{multibus_Section_existence}, we construct a finite volume scheme in which we fully use the precise study of 
Section \ref{1Bus_Section_existence}. A special treatment of the crossing points is described, see Section \ref{multibus_Section_mesh_scheme}.

Let us recall that we are given a finite (or more generally locally finite) family of trajectories and constraints 
$(y_i, q_i)_{i \in [\![1; J]\!]}$ defined on $\mathopen]s_i, T_i \mathclose[$ ($0 \leq s_i < T_i$). Introduce the notations:
\[
    \forall i \in [\![1;J]\!], \quad \Gamma_i := 
    \{(y_i(t), t) \; : \; t \in [s_i, T_i]\}.
\]

We suppose that for all $i \in [\![1;J]\!]$, $y_i \in \W{1}{\infty}(\mathopen]s_i, T_i \mathclose[, \R)$ and $q_i \in \L{\infty}(\mathopen]s_i, T_i \mathclose[, \R^+)$. This notation means that what 
can be seen as crossing points between interfaces will be considered as endpoints of the interfaces; for instance, given two crossing lines, we split them into 
four interfaces having a common endpoint. We denote by $(\cC_m)_{m \in [\![1;M]\!]}$ the set of all endpoints of the interfaces $\Gamma_i$, $i \in [\![1;J]\!]$.

\subsection{Reduction to a single interface}
\label{multibus_Section_partition_unity}

Fix $\varphi \in \Cc{\infty}(\overline{\Omega} \backslash \cup_{m=1}^M \cC_m, \R)$. 
Let us denote by $K$ the compact support of $\varphi$.

\textbf{Step 1.} For all $i \in [\![1;J]\!]$, $K \cap \Gamma_i$ is a compact subset (maybe empty) of $\overline{\Omega}$, and the family $(K \cap \Gamma_i)_i$ is 
pairwise disjoint. By compactness, 
\[ 
    \exists \delta > 0, \; \forall i,j \in [\![1;J]\!], \quad i \neq j \implies \text{dist}(K \cap \Gamma_i, K \cap \Gamma_j) \geq 2 \delta. 
\] 

\textbf{Step 2.} For all $i \in [\![1;J]\!]$, set 
\[ 
    \Omega_i := \bigcup_{(x, t) \in K \cap \Gamma_i} \textbf{B}((x, t), \delta),
\] 

where $\textbf{B}((x, t), \delta)$ denotes the $\R^2$-euclidean open ball centered on 
$(x, t)$ and of radius $\delta$. Clearly, $\Omega_i$ is an open subset 
of $\overline{\Omega}$ containing $\Gamma_i$. Moreover, the family $(\Omega_i)_i$ is pairwise disjoint. Indeed, suppose instead that for some 
$i, j \in [\![1;J]\!]$ ($i \neq j$), we have $\Omega_i \cap \Omega_j \neq \emptyset$,
and fix $(x, t) \in \Omega_i \cap \Omega_j$. By definition, there exists $(x_i, t_i) \in K \cap \Gamma_i$ and $(x_j, t_j) \in K \cap \Gamma_j$ such that 
\[
    (x, t) \in \textbf{B}((x_i, t_i), \delta) \cap \textbf{B}((x_j, t_j), \delta).
\]

Using the triangle inequality, we deduce that 
\[
    \text{dist}(K \cap \Gamma_i, K \cap \Gamma_j)
    \leq \text{dist}((x_i, t_i), (x_j, t_j))
    \leq \text{dist}((x_i, t_i), (x, t)) + \text{dist}((x, t), (x_j, t_j))
    < 2 \delta,
\]

yielding the contradiction.

\textbf{Step 3.} Define the open subset (finite intersection of open subsets):
\[
    \Omega_o := 
    \left\{(x, t) \in \overline{\Omega} \; : \; \forall i \in [\![1;J]\!], \; \text{dist}((x, t), K \cap \Gamma_i) \geq \frac{\delta}{2} \right\}.
\]

The family $(\Omega_i)_{i \in [\![0;J]\!]}$ is an open covering of $\R \times \R^+$. Consequently, there exists a partition of the unity 
$(\theta_i)_{i \in [\![0;J]\!]}$ associated with this covering:
\[
    \forall i \in [\![0;J]\!], \; \theta_i \in \Cc{\infty}(\Omega_i, \R^+); 
    \quad \forall (x,t) \in \R \times \R^+, \; \sum_{i=0}^J \theta_i(x, t) = 1.   
\] 

\textbf{Step 4.} We write the function $\varphi$ in the following manner:
\begin{equation}
    \label{partition_unity}
    \varphi = \sum_{i=0}^J (\varphi \theta_i) = \varphi_o + \sum_{i=1}^J \varphi_i.
\end{equation}

Note that:
\begin{enumerate}
    \item $\varphi_o$ vanishes along all the interfaces;
    \item for all $i \in [\![1;J]\!]$, $\varphi_i$ vanishes along all the interfaces but $\Gamma_i$.
\end{enumerate}

\subsection{Definition of solutions and uniqueness}

Following Section \ref{1Bus_Section_uniqueness} and Definition \ref{1Bus_def2}, we give the following definition of solution.

\begin{definition}
    \label{multibus_def1}
    Let $\rho_o \in \L{\infty}(\R, [0, 1])$. We say that $\rho \in \L{\infty}(\Omega, [0, 1])$ is a $\cG$-entropy solution to \eqref{multibus} if:

    (i) for all test functions $\varphi \in \Cc{\infty}(\overline{\Omega} \backslash \cup_{i=1}^J \Gamma_i, \R^+)$ and $\kappa \in [0,1]$, the 
    following entropy inequalities are verified:
	\begin{equation}
		\label{multibus_ei1}
        \int_{0}^{+\infty} \int_{\R} \biggl( |\rho-\kappa| \p_{t}\varphi + \Phi(\rho,\kappa) \p_{x} \varphi \biggr) \d x \d t 
        + \int_{\R}|\rho_o(x)-\kappa|\varphi(x,0) \d x \geq 0;
    \end{equation}

    (ii) for all $i \in [\![1;J]\!]$ and for a.e. $t \in \mathopen]s_i, T_i \mathclose[$,
	\begin{equation}
        \label{multibus_ci1}
        (\rho(y_i(t)-,t), \rho(y_i(t)+,t)) \in \cG_{\dot y_i(t)}(q_i(t)),
    \end{equation}

    where the admissibility germ $\cG_{\dot y_i}(q_i)$ is defined in Definition \ref{1Bus_def_germ}.
\end{definition}

\begin{lemma}[Kato inequality]
    \label{multibus_Kato_lmm}
    Fix $\rho_o, \sigma_o \in \L{\infty}(\R, [0,1])$. Let $(q_i)_{i \in [\![1;J]\!]}$ and $(\overset{\sim}{q}_i)_{i \in [\![1;J]\!]}$ be two family of constraints, 
    where for all $i \in [\![1;J]\!]$, $q_i$, $\overset{\sim}{q}_i \in \L{\infty}(\mathopen]s_i, T_i \mathclose[, \R)$. We denote by $\rho$ (resp. $\sigma$) a $\cG$-entropy solution to 
    Problem \eqref{multibus} corresponding to initial datum $\rho_o$ (resp. $\sigma_o$) and constraints $(q_i)_{i \in [\![1;J]\!]}$ 
    (resp. $(\overset{\sim}{q}_i)_{i \in [\![1;J]\!]}$). Then for all test functions $\varphi \in \Cc{\infty}(\overline{\Omega}, \R^+)$, we have
    \begin{equation}
        \label{multibus_Kato}
        \begin{aligned}
            & \int_{0}^{+\infty} \int_{\R} \biggl( |\rho- \sigma| \p_{t}\varphi + \Phi(\rho, \sigma) \p_{x} \varphi \biggr) \d x \d t  
            + \int_{\R} |\rho_o(x) - \sigma_o(x)| \varphi(x,0) \d x \\[5pt]
            & + \sum_{i=1}^J \int_{s_i}^{T_i} \biggl( \Phi_{\dot y_i(t)} \left(\rho(y_i(t)+,t), \sigma(y_i(t)+,t) \right) 
            - \Phi_{\dot y_i(t)} \left(\rho(y_i(t)-,t), \sigma(y_i(t)-,t) \right) \biggr) \varphi(y_i(t),t) \d{t} \geq 0.
        \end{aligned}
	\end{equation}
\end{lemma}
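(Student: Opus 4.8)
The plan is to use the partition of unity $\varphi = \varphi_o + \sum_{i=1}^J \varphi_i$ built in Section~\ref{multibus_Section_partition_unity} to decouple the $J$ interfaces, reduce each local contribution to the single-interface analysis of Section~\ref{1Bus_Section_uniqueness}, and then recombine by linearity. Since the decomposition \eqref{partition_unity} is only available for test functions that vanish near the crossing points $\cC_m$, I would first establish \eqref{multibus_Kato} for $\varphi \in \Cc{\infty}(\overline{\Omega} \backslash \cup_{m=1}^M \cC_m, \R^+)$, and then recover the general case by truncating $\varphi$ with cut-offs supported in $\bigO{\delta}$-neighborhoods of the finitely many points $\cC_m$ and letting $\delta \to 0$: the bulk errors are $\bigO{\delta}$ (each cut-off has gradient of size $\bigO{\delta^{-1}}$ on a set of area $\bigO{\delta^2}$), while the interface integrals converge by dominated convergence because each curve $\Gamma_i$ meets a neighborhood of a given $\cC_m$ only on a vanishing set of times.

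So fix $\varphi$ vanishing near the $\cC_m$ and write $\varphi = \varphi_o + \sum_{i=1}^J \varphi_i$ with the properties listed after \eqref{partition_unity}. The geometric point I would record is that, by the choice $\text{dist}(K \cap \Gamma_i, K \cap \Gamma_j) \geq 2\delta$ and the definition of $\Omega_i$, the support of $\varphi_i$ lies in $\Omega_i \cap K$ and is at positive distance from every $\Gamma_j$ with $j \neq i$; hence $\varphi_i$ vanishes in a neighborhood of all interfaces but $\Gamma_i$. In particular, test functions supported in $\Omega_i$ and vanishing along $\Gamma_i$ are admissible in \eqref{multibus_ei1}, and the restrictions of $\rho$ and $\sigma$ to a neighborhood of $\Gamma_i$ are, respectively, a $\cG_{\dot y_i}(q_i)$- and a $\cG_{\dot y_i}(\overset{\sim}{q}_i)$-entropy solution of the single-interface problem \eqref{1Bus} there: they satisfy Kruzhkov entropy inequalities against test functions vanishing on $\Gamma_i$, and the germ condition \eqref{multibus_ci1} on $\Gamma_i$ is the analogue of \eqref{1Bus_ci2}.

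Consequently, I would apply the single-trajectory Kato inequality, Lemma~\ref{1Bus_Kato1_lmm} --- or rather its proof, which is purely local around the interface (doubling of variables away from the curve, together with the cut-off $\gamma_n$ and the germ dissipativity along it) and therefore carries over verbatim to a compactly supported test function --- with test function $\varphi_i$ and constraints $q_i, \overset{\sim}{q}_i$. This yields \eqref{multibus_Kato} with $\varphi$ replaced by $\varphi_i$ and only the $i$-th interface term surviving (the contributions along $\Gamma_j$, $j \neq i$, vanish since $\varphi_i \equiv 0$ there). For $\varphi_o$ I would instead invoke the classical Kruzhkov doubling of variables on $\overline{\Omega} \backslash \cup_i \Gamma_i$, where $\varphi_o$ is supported, obtaining \eqref{multibus_Kato} with $\varphi_o$ and no interface term at all. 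Summing these $J+1$ inequalities and using linearity in the test function --- noting that along $\Gamma_i$ only the piece $\varphi_i$ is nonzero, so the $i$-th interface terms reassemble into $\sum_{i=1}^J \int_{s_i}^{T_i}(\cdots)\,\varphi(y_i(t),t)\,\d{t}$ --- gives exactly \eqref{multibus_Kato}.

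The main obstacle is not the recombination, which is bookkeeping, but justifying the local reduction rigorously: one must check that the strong one-sided traces of $\rho, \sigma$ along each $\Gamma_i$ entering Lemma~\ref{1Bus_Kato1_lmm} coincide with those used in Definition~\ref{multibus_def1}, and that the restriction of \eqref{multibus_ei1} to $\Omega_i$ is strong enough to run the doubling-of-variables argument there. Both are fine once one grants, as in the remark following Definition~\ref{1Bus_def2}, that $\cG$-entropy solutions admit strong traces along the Lipschitz curves $\Gamma_i$; the remaining ingredients --- the cut-off near the $\cC_m$ and the limit in the doubling parameter --- are routine and identical to the single-interface case.
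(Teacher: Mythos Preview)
Your proposal is correct and follows essentially the same two-step route as the paper: first establish \eqref{multibus_Kato} for test functions vanishing near the crossing points by decomposing $\varphi$ via the partition of unity \eqref{partition_unity}, applying the single-interface computations of Lemma~\ref{1Bus_Kato1_lmm} to each $\varphi_i$ and the classical Kruzhkov argument to $\varphi_o$, and summing; then extend to general $\varphi \in \Cc{\infty}(\overline{\Omega}, \R^+)$ by a cut-off near the $\cC_m$ whose gradient has vanishing $\L{1}$ norm. One small inaccuracy: the proof of Lemma~\ref{1Bus_Kato1_lmm} does not use germ dissipativity (it only produces the trace terms via the cut-off $\gamma_n$; the dissipativity estimate is invoked later, in Theorem~\ref{1Bus_stability_th1}), but this does not affect your argument.
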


\begin{proof}
    We split the reasoning in two steps.

    \textbf{Step 1.} Suppose first that $\varphi \in \Cc{\infty}(\overline{\Omega} \backslash \cup_{m=1}^M \cC_m, \R^+)$. In this case, we write $\varphi$ using the 
    partition of unity \eqref{partition_unity}. Fix $i \in [\![1;J]\!]$. Following the computations of Lemma \ref{1Bus_Kato1_lmm}, we obtain:
    \begin{equation}
        \label{multibus_Kato_eq2}
        \begin{aligned}
            & \iint_{\Omega_i} \biggl( |\rho- \sigma| \p_{t}\varphi_i + \Phi(\rho, \sigma) \p_{x} \varphi_i \biggr) \d x \d t  
            + \int_{\{x \in \R \; : \; (x, 0) \in \Omega_i\}} |\rho_o(x) - \sigma_o(x)| \varphi_i(x,0) \d x \\[5pt]
            & + \int_{s_i}^{T_i} \biggl( \Phi_{\dot y_i(t)} \left(\rho(y_i(t)+,t), \sigma(y_i(t)+,t) \right) 
            - \Phi_{\dot y_i(t)} \left(\rho(y_i(t)-,t), \sigma(y_i(t)-,t) \right) \biggr) \varphi_i(y_i(t),t) \d{t} \geq 0.
        \end{aligned}
    \end{equation}

    Now, since $\varphi_o$ vanishes along all the interfaces, standard computations lead to
    \begin{equation}
        \label{multibus_Kato_eq3}
        \iint_{\Omega_o} \biggl( |\rho- \sigma| \p_{t}\varphi_o + \Phi(\rho, \sigma) \p_{x} \varphi_o \biggr) \d x \d t  
        + \int_{\{x \in \R \; : \; (x, 0) \in \Omega_o\}} |\rho_o(x) - \sigma_o(x)| \varphi_o(x,0) \d x \geq 0.
    \end{equation}

    We now sum \eqref{multibus_Kato_eq2} ($i \in [\![1;J]\!]$) and \eqref{multibus_Kato_eq3} to obtain \eqref{multibus_Kato}. This inequality is 
    analogous to \eqref{1Bus_Kato1}.

    \textbf{Step 2.} Consider now $\varphi \in \Cc{\infty}(\overline{\Omega}, \R^+)$. 
    Fix $n \in \N^*$. From the first step, a classical approximation 
    argument allows us to apply \eqref{multibus_Kato} with the Lipschitz test function
    \[ 
        \psi_n (x, t) = \left( \sum_{m=1}^M \delta_{m, n}(x, t) \right) \varphi(x, t),    
    \]

    where for all $m \in [\![1;M]\!]$,
    \[
        \delta_{m, n}(x, t) = 
        \left\{
            \begin{array}{ccc}
                0 & \text{if} & \ds{\text{dist}_1((x, t), \cC_m) < \frac{1}{n}} \\[5pt]
                \ds{n \left(\text{dist}_1((x, t), \cC_m) - \frac{1}{n} \right)} 
                & \text{if} & \ds{\frac{1}{n} \leq \text{dist}_1((x, t), \cC_m) \leq \frac{2}{n}} \\[5pt]
                1 & \text{if} & \ds{\text{dist}_1((x, t), \cC_m) > \frac{2}{n}},
            \end{array}
        \right.
    \] 

    where, by analogy with the proof of Lemma \ref{1Bus_Kato1_lmm}, $\text{dist}_1$ denotes the $\R^2$ distance associated with the norm $\|\cdot\|_1$. We 
    let $n \to +\infty$, keeping in mind that:
    \[ 
        \left\| \left(\sum_{m=1}^M \delta_{m, n}\right) \varphi - \varphi \right\|_{\L{1}(\Omega, \R)} \limit{n}{+\infty} 0; \quad 
        \forall m \in [\![1 ;M]\!], \; \left\| \nabla \delta_{m, n} \right\|_{\L{1}(\Omega, \R^2)} = \bigO{\frac{1}{n}}.
    \]

    Straightforward computations lead to \eqref{multibus_Kato} with $\varphi \in \Cc{\infty}(\overline{\Omega}, \R)$, concluding the proof.
\end{proof}

\begin{theorem}
    \label{multibus_stability_th}
    Fix $\rho_o, \sigma_o \in \L{\infty}(\R, [0,1])$. Let 
    $(q_i)_{i \in [\![1;J]\!]}$ and $(\overset{\sim}{q}_i)_{i \in [\![1;J]\!]}$ be two 
    family of constraints, where for all $i \in [\![1;J]\!]$, $q_i$, 
    $\overset{\sim}{q}_i \in \L{\infty}(\mathopen]s_i, T_i \mathclose[, \R)$.
     We denote by $\rho$ (resp. $\sigma$) a $\cG$-entropy solution to 
    Problem \eqref{multibus} corresponding to initial datum $\rho_o$ (resp. $\sigma_o$) 
    and constraints $(q_i)_{i \in [\![1;J]\!]}$ 
    (resp. $(\overset{\sim}{q}_i)_{i \in [\![1;J]\!]}$). Then for all $T > 0$, we have 
    \begin{equation}
        \label{multibus_stability}
        \|\rho(T) - \sigma(T)\|_{\L{1}(\R)} \leq \|\rho_o - \sigma_o\|_{\L{1}(\R)} 
        + \sum_{i =1}^J 2 \int_{s_i}^{T_i} \left| q_i(t) - \overset{\sim}{q}_i(t) \right| \d{t}.
    \end{equation}

    In particular, Problem \eqref{multibus} admits at most one $\cG$-entropy solution.
\end{theorem}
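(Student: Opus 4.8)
The plan is to imitate the proof of Theorem \ref{1Bus_stability_th1}, using the Kato inequality \eqref{multibus_Kato} of Lemma \ref{multibus_Kato_lmm} in place of \eqref{1Bus_Kato1}. The crucial point is that the partition of unity argument of Section \ref{multibus_Section_partition_unity} and the delocalization near the crossing points $\cC_m$ have already been absorbed into the statement of Lemma \ref{multibus_Kato_lmm}, so that once that inequality is granted the argument is essentially identical to the single–interface case, with only the obvious bookkeeping over $i$.

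First I would fix $T > 0$ and, using the local finiteness of the family $(y_i, q_i)_i$, note that only finitely many indices $i$ satisfy $\mathopen]s_i, T_i\mathclose[ \cap \mathopen]0, T\mathclose[ \neq \emptyset$; I would then choose $R > 0$ with $R \geq \|y_i\|_{\L{\infty}(\mathopen]s_i, T_i\mathclose[ \cap \mathopen]0, T\mathclose[)}$ for each such $i$, and set $L := \|f'\|_{\L{\infty}} + \max_i \|\dot y_i\|_{\L{\infty}(\mathopen]s_i, T_i\mathclose[)}$. As in the proof of Theorem \ref{1Bus_stability_th1}, I build the sequence $(\varphi_n)_n$ of smooth nonnegative test functions approximating the characteristic function of the trapezoid
\[
    \mathcal{T} := \left\{ (x,t) \in \overline{\Omega} \; : \; t \in [0, T], \; |x| \leq R - L(t-T) \right\},
\]
which by the choice of $R$ and $L$ contains all the relevant portions of the interfaces $\Gamma_i$; in particular $\varphi_n(y_i(t), t) \to 1$ for a.e. $t$.

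Next I would apply \eqref{multibus_Kato} with $\varphi = \varphi_n$ and let $n \to +\infty$. The same computations as in Theorem \ref{1Bus_stability_th1} show that the space–time term tends to $- \int_{|x| \leq R} |\rho(x,T) - \sigma(x,T)| \d{x}$ minus a nonnegative lateral flux contribution (because $L |\rho-\sigma| + \Phi(\rho,\sigma)\sgn(x) \geq (L - \|f'\|_{\L{\infty}})|\rho-\sigma| \geq 0$), that the initial term tends to $\int_{|x| \leq R + LT} |\rho_o - \sigma_o| \d{x}$, and that the interface terms tend to $\sum_i \int_{s_i}^{T_i} \Delta_i(t) \d{t}$, where
\[
    \Delta_i(t) := \Phi_{\dot y_i(t)}\!\left( \rho(y_i(t)+,t), \sigma(y_i(t)+,t) \right) - \Phi_{\dot y_i(t)}\!\left( \rho(y_i(t)-,t), \sigma(y_i(t)-,t) \right).
\]
Discarding the nonnegative lateral term and letting $R \to +\infty$ gives
\[
    \|\rho(T) - \sigma(T)\|_{\L{1}(\R)} \leq \|\rho_o - \sigma_o\|_{\L{1}(\R)} + \sum_{i=1}^J \int_{s_i}^{T_i} \Delta_i(t) \d{t}.
\]

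Finally I would bound each $\Delta_i(t)$ individually. Since $\rho$ (resp. $\sigma$) is a $\cG_{\dot y_i}(q_i)$-entropy (resp. $\cG_{\dot y_i}(\overset{\sim}{q}_i)$-entropy) solution, for a.e. $t$ the couples $(\rho(y_i(t)-,t),\rho(y_i(t)+,t))$ and $(\sigma(y_i(t)-,t),\sigma(y_i(t)+,t))$ lie in $\cG_{\dot y_i(t)}(q_i(t))$ and $\cG_{\dot y_i(t)}(\overset{\sim}{q}_i(t))$ respectively, so the very same case table used in the proof of Theorem \ref{1Bus_stability_th1} (distinguishing the relative positions in $\cG^1, \cG^2, \cG^3$) yields $\Delta_i(t) \leq 2|q_i(t) - \overset{\sim}{q}_i(t)|$. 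Summing over $i$ produces \eqref{multibus_stability}, and taking $\overset{\sim}{q}_i = q_i$ for all $i$ gives uniqueness. The only genuine difficulty — the gluing of the local single–interface estimates and the behaviour at the crossing points — has already been resolved in Lemma \ref{multibus_Kato_lmm}; here the remaining care is purely routine, namely choosing $R$ large enough to trap all interfaces active on $[0,T]$ and checking the sign of the lateral flux term, exactly as in the single–trajectory case.
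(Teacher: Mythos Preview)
Your proposal is correct and follows essentially the same approach as the paper's own proof: apply the multi-interface Kato inequality \eqref{multibus_Kato} with the trapezoidal test functions from Theorem \ref{1Bus_stability_th1}, then invoke the case table of that theorem to bound each $\Delta_i(t)$ by $2|q_i(t) - \overset{\sim}{q}_i(t)|$. The paper's proof is simply a terse pointer to exactly these two ingredients, so your write-up is just a more explicit version of it.
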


\begin{proof}
    Estimate \eqref{multibus_stability} follows from Kato inequality \eqref{multibus_Kato} with a suitable choice of test function and in light of the inequality:
    \[
        \begin{aligned}
            & \forall i \in [\![1;J]\!], \; \text{for a.e.} \; t \in \mathopen]s_i, T_i \mathclose[, \\[5pt]
            & \Phi_{\dot y_i(t)} \left(\rho(y_i(t)+,t), \sigma(y_i(t)+,t) \right) - \Phi_{\dot y_i(t)} \left(\rho(y_i(t)-,t), \sigma(y_i(t)-,t) \right)
            \leq 2 |q_i(t) - \overset{\sim}{q}_i(t)|,
        \end{aligned}
    \]

    see Theorem \ref{1Bus_stability_th1} and its proof.
\end{proof}

\subsection{Proof of existence}
\label{multibus_Section_existence}

Following the reasoning of Sections \ref{1Bus_Section_uniqueness}-\ref{1Bus_Section_existence}, we introduce a second definition of solutions, more suitable to 
prove existence.

\begin{definition}
    \label{multibus_def2}
    Let $\rho_o \in \L{\infty}(\R, [0, 1])$. We say that 
    $\rho \in \L{\infty}(\Omega, [0, 1])$ is an admissible entropy solution to \eqref{multibus} if

    (i) for all test functions $\varphi \in \Cc{\infty}(\overline{\Omega}, \R^+)$ and $\kappa \in [0,1]$, the following entropy inequalities are verified:
	\begin{equation}
        \label{multibus_ei2}
        \begin{aligned}
            \int_{0}^{+\infty} \int_{\R} & \biggl( |\rho-\kappa| \p_{t}\varphi + \Phi(\rho,\kappa) \p_{x} \varphi \biggr) \d x \d t  
            + \int_{\R} |\rho_o(x)-\kappa|\varphi(x,0) \d x \\[5pt]
            & + \sum_{i=1}^J \int_{s_i}^{T_i} \cR_{\dot y_i(t)}(\kappa, q_i(t)) \varphi(y_i(t),t) \d t \geq 0,
        \end{aligned}
	\end{equation}

    where $\cR_{\dot y_i}(\kappa, q_i)$ is defined in Definition \ref{1Bus_def1} ;

    (ii) for all test functions $\varphi \in \Cc{\infty}(\Omega \backslash \cup_{m=1}^M \cC_m, \R^+)$, written under the form 
    \eqref{partition_unity}, the following constraint inequalities are verified for all $i \in [\![1;J]\!]$:
	\begin{equation}
        \label{multibus_ci2}
        - \iint_{\Omega_i^+} \biggl( \rho \p_{t} \varphi + f(\rho) \p_x \varphi \biggr) \d x \d t \leq \int_{s_i}^{T_i} q_i(t) \varphi_i(y_i(t), t) \d t,
    \end{equation}

    where $\ds{\Omega_i^+ := \left\{ (x,t) \in \Omega_i \; : \; x > y_i(t) \right\}}$.
\end{definition}

\begin{proposition}
    \label{multibus_defs_equivalent}
    Definition \ref{multibus_def1} and Definition \ref{multibus_def2} are equivalent. Moreover, in Definition \ref{multibus_def2} (i), it is equivalent that 
    \eqref{multibus_ei2} holds with $\varphi \in \Cc{\infty}(\overline{\Omega} \backslash \cup_{m=1}^M \cC_m, \R^+)$.
\end{proposition}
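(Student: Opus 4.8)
The plan is to reproduce the single--interface equivalences of Propositions~\ref{1Bus_def12_pp} and~\ref{1Bus_def21_pp} trajectory by trajectory and to glue them together with the partition of unity~\eqref{partition_unity}; the only genuinely new ingredient is the treatment of the crossing points $\cC_m$, handled exactly as in Step~2 of the proof of Lemma~\ref{multibus_Kato_lmm}. I would first dispatch the \emph{moreover} claim, since it is used afterwards. One inclusion is trivial, since a test function with support in $\overline{\Omega}\backslash\cup_{m=1}^M\cC_m$ extends by zero to an element of $\Cc{\infty}(\overline{\Omega},\R^+)$. For the converse, assuming \eqref{multibus_ei2} for all $\varphi \in \Cc{\infty}(\overline{\Omega}\backslash\cup_{m=1}^M\cC_m,\R^+)$, and given an arbitrary $\varphi \in \Cc{\infty}(\overline{\Omega},\R^+)$, a standard approximation argument lets me apply the assumed inequality to the nonnegative Lipschitz function $\left(\prod_{m=1}^M\delta_{m,n}\right)\varphi$, which vanishes near each $\cC_m$; letting $n\to+\infty$ and using $\prod_m\delta_{m,n}\to 1$ a.e., $\left\|\nabla\left(\prod_{m=1}^M\delta_{m,n}\right)\right\|_{\L{1}(\Omega,\R^2)}=\bigO{1/n}$, and the fact that each $\Gamma_i$ meets $\cup_{m=1}^M\cC_m$ only at its finitely many endpoints (a $t$--null set), the extra contributions vanish in the limit and \eqref{multibus_ei2} follows for $\varphi$.

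For Definition~\ref{multibus_def1} $\Rightarrow$ Definition~\ref{multibus_def2}, let $\rho$ be a $\cG$--entropy solution. Given $\varphi \in \Cc{\infty}(\overline{\Omega}\backslash\cup_{m=1}^M\cC_m,\R^+)$, decompose $\varphi = \varphi_o + \sum_{i=1}^J\varphi_i$ as in~\eqref{partition_unity}. Since $\varphi_o$ vanishes along every interface, applying~\eqref{multibus_ei1} to $\varphi_o$ gives the $\varphi_o$--part of~\eqref{multibus_ei2}, all reminder terms being zero. For each $i$, $\varphi_i$ is supported in $\Omega_i$ and vanishes along all interfaces but $\Gamma_i$; there~\eqref{multibus_ci1} says that $\rho$ is a $\cG_{\dot y_i}(q_i)$--entropy solution of the single--interface problem, so the computation in the proof of Proposition~\ref{1Bus_def21_pp}, being entirely local around $\Gamma_i$, produces the $\Gamma_i$--part of~\eqref{multibus_ei2} with test function $\varphi_i$. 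Summing over $i$ and adding the $\varphi_o$--part gives~\eqref{multibus_ei2} for every $\varphi \in \Cc{\infty}(\overline{\Omega}\backslash\cup_{m=1}^M\cC_m,\R^+)$, hence for all $\varphi \in \Cc{\infty}(\overline{\Omega},\R^+)$ by the \emph{moreover} claim. Property~(ii) is obtained the same way: on $\Omega_i$, condition~\eqref{multibus_ci1} forces the a.e. flux constraint $(f(\rho)-\dot y_i(t)\rho)_{|x=y_i(t)}\leq q_i(t)$, which by the localization of Remark~\ref{1Bus_rk1} to $\Omega_i^+$ is precisely~\eqref{multibus_ci2}.

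For Definition~\ref{multibus_def2} $\Rightarrow$ Definition~\ref{multibus_def1}, let $\rho$ be an admissible entropy solution. Taking $\varphi \in \Cc{\infty}(\overline{\Omega}\backslash\cup_{i=1}^J\Gamma_i,\R^+)$ in~\eqref{multibus_ei2} kills every reminder term and returns~\eqref{multibus_ei1}, so Definition~\ref{multibus_def1}(i) holds. For~\eqref{multibus_ci1}, fix $i$ and use in~\eqref{multibus_ci2} test functions whose component $\varphi_i$ concentrates around $\Gamma_i$: the localized Remark~\ref{1Bus_rk1} gives the a.e. bound $F_{\dot y_i(t)}(\rho)_{|x=y_i(t)}\leq q_i(t)$, while feeding $\xi_n\varphi_i$ (the cut--off from the proof of Proposition~\ref{1Bus_def12_pp}) into~\eqref{multibus_ei2} and choosing $\kappa = \text{argmax}(F_{\dot y_i(t)})$ yields, in the limit $n\to+\infty$, the one--sided bound $F_{\dot y_i(t)}(\rho(y_i(t)-,t))\geq q_i(t)$ whenever $\rho(y_i(t)-,t)>\rho(y_i(t)+,t)$; combining the two exactly as in Proposition~\ref{1Bus_def12_pp} gives $(\rho(y_i(t)-,t),\rho(y_i(t)+,t))\in\cG_{\dot y_i(t)}(q_i(t))$ for a.e. $t\in\mathopen]s_i,T_i\mathclose[$.

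The only real obstacle, beyond bookkeeping, is making sure nothing is lost at the crossing points: the partition of unity and every single--interface argument are valid only on $\overline{\Omega}\backslash\cup_{m=1}^M\cC_m$, so enlarging the test--function class in~\eqref{multibus_ei2} to functions that need not vanish at the $\cC_m$ must be shown to cost nothing. This is exactly the \emph{moreover} statement, and it holds because the $\cC_m$ are finitely many points — so the gradients of the cut--offs have $\L{1}$ norm $\bigO{1/n}$ — and because each interface $\Gamma_i$ touches $\cup_{m=1}^M\cC_m$ only in a $t$--null set, so the interface integrals in~\eqref{multibus_ei2} are insensitive to the modification.
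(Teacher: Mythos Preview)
Your proposal is correct and follows essentially the same approach as the paper: adapt Propositions~\ref{1Bus_def12_pp}--\ref{1Bus_def21_pp} interface by interface via the partition of unity~\eqref{partition_unity}, and handle the crossing points by the cut-off approximation of Step~2 in Lemma~\ref{multibus_Kato_lmm}. Your choice of the product $\prod_{m=1}^M\delta_{m,n}$ rather than the sum written in the paper is in fact the right normalization for the approximation to converge to~$\varphi$, and the $\bigO{1/n}$ gradient bound you quote is correct since each $\nabla\delta_{m,n}$ is $O(n)$ on an annulus of area $O(1/n^2)$.
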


\begin{proof}
    The proof of the equivalence of Definitions \ref{multibus_def1} and \ref{multibus_def2} is a straightforward adaptation of the proofs of 
    Propositions \ref{1Bus_def12_pp}-\ref{1Bus_def21_pp}. The last part of the statement follows using the same approximation argument described at the end of 
    the proof of Lemma \ref{multibus_Kato_lmm}.
\end{proof}

Let us now turn to the proof of existence for admissible entropy solutions of \eqref{multibus}. We make use of the precise study of Section 
\ref{1Bus_Section_existence} in the case of a single trajectory and build a finite volume scheme. We keep the notations of Section \ref{1Bus_Section_existence} 
when there is no ambiguity.

\subsubsection{Construction of the mesh, definition of the scheme}
\label{multibus_Section_mesh_scheme}

For the sake of clarity, suppose that we only have two trajectories/constraints $(y_i, q_i)$, $i \in \{1, 2\}$, defined on $[0, \tau]$, which cross at time 
$\tau$. We denote by $\cC$ this crossing point. Suppose also that this crossing point results in two additional trajectories/constraints $(y_i, q_i)$, 
$i \in \{3, 4\}$, defined on $[\tau, T]$, and which do not cross, as represented in Figure \ref{fig4}.

Let us fully make explicit the steps of the reasoning leading to the construction of our scheme in that situation. Suppose that $\lambda = \Delta t / \Delta x$ is 
fixed and verifies the CFL condition
\begin{equation}
	\label{multibus_cfl}
    2 \left(\underbrace{\|f'\|_{\L{\infty}} + \max_{i \in [\![1; 4]\!]} \|\dot y_i \|_{\L{\infty}(\mathopen]0, T\mathclose[)}}_{:=L} \right) \lambda \leq 1.
\end{equation}

Set $N \in \N$ such that $\tau \in [t^N, t^{N+1}\mathclose[$. We divide the discussion in four parts.

\textbf{Part 1.} Introduce the number
\[ 
    N_1 := \inf \left \{n \in \N \; : \; |y_\Delta^1 (t^n) - y_\Delta^2(t^n)| \leq 4 \Delta x \right \}.    
\]

The definition of $N_1$ ensures that for all $n \in [\![0; N_1 -1]\!]$, we can independently modify the mesh near the two trajectories $y_\Delta^1$ and 
$y_\Delta^2$, as presented in Figure \ref{fig5}. 

\newpage

Consequently, we can simply define the approximate solution $\rho_\Delta$ on $\R \times [0, t^{N_1 - 1}]$ as 
the finite volume approximation of a conservation law, with initial datum $\rho_o$, with flux constraints on two non-interacting trajectories, using the recipe of 
Section \ref{1Bus_Section_existence} for each trajectory/constraint.

\begin{figure}[!htp]
	\begin{center}
		\includegraphics[scale = 0.7]{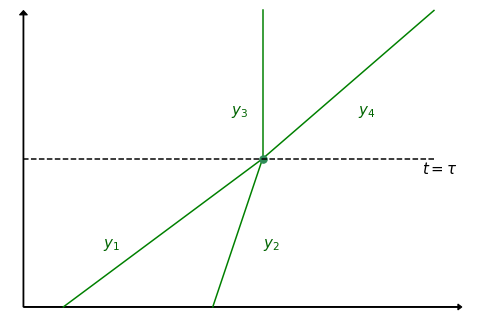}
	\end{center}
    \vspace*{-5mm}
	\caption{Illustration of the configuration.}
	\label{fig4}
\end{figure}

\textbf{Part 2.} Fix now $n \in [\![N_1; N]\!]$. In these time intervals, since the two trajectories are too close to each other, one cannot modify the 
mesh in the neighborhood of one of them without affecting the other. However, the scheme has to be defined globally, so we proceed as described below.
\begin{itemize}
    \item First, introduce the mean trajectory and the new constraint:
    \[
        \forall t \in [0, \tau], \quad y_{12}(t) := \frac{y_1(t) + y_2(t)}{2}; \quad q_{12}(t) := \min\{q_1(t), q_2(t)\},
    \]

    represented in purple in Figure \ref{fig5}, before the crossing point (in red). The choice of taking the minimal level of constraint in the definition 
    of $q_{12}$ stems from the nature of the constrained problem; see however Remark \ref{crossing_points_rk} below.
    \item Then, define $\rho_\Delta$ on $\R \times [t^{N_1}, t^{N}]$ as the finite volume approximation of the one trajectory/one constraint problem:
    \[
        \left\{
            \begin{aligned}
                \p_{t}\rho + \p_{x} \left( f(\rho) \right) & = 0 \\
                \rho(\cdot, t^{N_1}) & = \rho_{\Delta}(\cdot, t^{N_1 - 1}) \\
                \left. \left(f(\rho) - \dot y_{12}(t) \rho \right) \right|_{x= y_{12}(t)} & \leq q_{12}(t) & t \in \mathopen]t^{N_1}, t^{N}\mathclose[,
            \end{aligned}
        \right.
    \]

    using exactly the recipe of Section \ref{1Bus_Section_mesh_scheme}.
\end{itemize}

\textbf{Part 3.} Introduce the number:
\[ 
    N_2 := \inf \left \{n \in \N \; : \; n > N \;\; \text{and} \;\; 
    |y_\Delta^3 (t^n) - y_\Delta^4(t^n)| \geq 4 \Delta x \right \}.    
\]

For $n \in [\![N; N_2]\!]$, we are in the same situation as Part 2. We proceed to the same construction, \textit{mutatis mutandis.}
\begin{itemize}
    \item As in Part 2, define the mean trajectory and the new constraint:
    \[
        \forall t \in [\tau, T], \quad y_{34}(t) := \frac{y_3(t) + y_4(t)}{2}; \quad q_{34}(t) := \min\{q_3(t), q_4(t)\},
    \]

    represented in purple in Figure \ref{fig5}, after the crossing point.
    \item Define $\rho_\Delta$ on $\R \times [t^{N}, t^{N_2}]$ as the finite volume approximation of the one trajectory/one constraint problem:
    \[
        \left\{
            \begin{aligned}
                \p_{t}\rho + \p_{x} \left( f(\rho) \right) & = 0 \\
                \rho(\cdot, t^N) & = \rho_{\Delta}(\cdot, t^{N}) \\
                \left. \left(f(\rho) - \dot y_{34}(t) \rho \right) \right|_{x= y_{34}(t)} & \leq q_{34}(t) & t \in \mathopen]t^{N}, t^{N_2}\mathclose[.
            \end{aligned}
        \right.
    \]
\end{itemize}

\begin{figure}[!htp]
	\begin{center}
		\includegraphics[scale = 0.7]{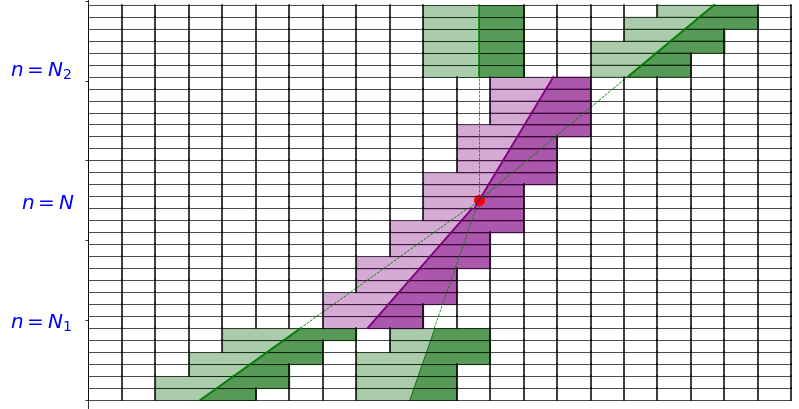}
	\end{center}
    \vspace*{-5mm}
	\caption{Illustration of the local modifications of the mesh.}
	\label{fig5}
\end{figure}

\textbf{Part 4.} Finally, $\rho_\Delta$ is defined on $\R \times [t^{N_2}, T]$ like in Part 1 with $y_3, q_3, \rho_{\Delta}(\cdot, t^{N_2})$ (resp. $y_4, q_4$) 
playing the role of $y_1, q_1, \rho_o$ (resp. of $y_2, q_2$).

\begin{remark}
    \label{crossing_points_rk}
    Let us stress out that the details of the treatment done in Parts 2-3 do not play any significant role in the convergence proof below thanks to the choice 
    of test functions vanishing at neighborhood of the crossing points, see Proposition \ref{multibus_defs_equivalent}. Consequently, taking the mean trajectory 
    and the minimum of the constraint is merely an example aiming at preserving some consistency while keeping the scheme simple to understand and implement.
\end{remark}  

The general case of a finite number of interfaces (locally finite number can be easily included) is treated in the same way, leading to a pattern with the 
uniform rectangular mesh adapted to each of the interfaces $\Gamma_i$, $i \in [\![1;J]\!]$ except for small (in terms of the number of impacted mesh cells) 
neighborhoods of the crossing points $\cC_m$, $m \in [\![1;M]\!]$.

\subsubsection{Proof of convergence}

\begin{theorem}
    \label{multibus_convergence_th}
    Fix $T> 0$, $f \in \Ck{2}([0,1], \R^+)$ satisfying \eqref{bell_shaped}-\eqref{1Bus_uniform_concavity} and $\rho_o \in \L{\infty}(\R, [0,1])$. Let 
    $(y_i, q_i)_{i \in [\![1; J]\!]}$ be a finite family of trajectories and constraints defined on $\mathopen]s_i, T_i \mathclose[$ ($0 \leq s_i < T_i$). We suppose that for all 
    $i \in [\![1; J]\!]$, $y_i \in \W{1}{\infty}(\mathopen]s_i, T_i \mathclose[, \R)$ and $q_i \in \L{\infty}(\mathopen]s_i, T_i \mathclose[, \R^+)$. Suppose also that the interfaces $(\Gamma_i)_i$ 
    defined by the trajectories $(y_i)_i$ have a finite number of crossing points. Then as $\Delta \to 0$ while satisfying the CFL condition
    \[
        2 \left(\underbrace{\|f'\|_{\L{\infty}} + \max_{i \in [\![0; J]\!]} \|\dot y_i \|_{\L{\infty}(\mathopen]0, T\mathclose[)}}_{:=L} \right) \lambda \leq 1,
    \]
    
    the sequence $(\rho_\Delta)_\Delta$ constructed by the procedure of Section \ref{multibus_Section_mesh_scheme} converges a.e. on $\Omega$ to the admissible 
    entropy solution to \eqref{multibus}.
\end{theorem}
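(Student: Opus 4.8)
The plan is to reproduce the argument of Theorem \ref{1Bus_convergence_th}, inserting a localization step --- based on the partition of unity constructed in Section \ref{multibus_Section_partition_unity} --- which decouples the interfaces $\Gamma_i$ from one another and isolates the crossing points. Write $\mathcal{N} := \bigcup_{i=1}^J \Gamma_i \cup \{\cC_1, \dots, \cC_M\}$; this set is Lebesgue-negligible. The first task is the global $\L{\infty}$ bound: by construction $\rho_\Delta$ is obtained by chaining finitely many single-trajectory schemes of the type studied in Section \ref{1Bus_Section_existence} --- one per time slab of Parts 1--4 of Section \ref{multibus_Section_mesh_scheme} and their analogues in the general configuration --- each run on the output of the preceding one as initial datum. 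Since all the constraints involved (the $q_i$ and the merged ones $q_{12}, q_{34}, \dots$) are nonnegative and satisfy \eqref{1Bus_assumption_constraint}, Proposition \ref{1Bus_scheme_stability_pp} applies piecewise and gives $\rho_\Delta \in [0,1]$ on $\overline{\Omega}$.

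The next step is compactness. I fix a compact set $\mathcal{K} \subset \Omega \backslash \mathcal{N}$ and observe that the time slabs on which the scheme replaces two interacting trajectories by their mean collapse onto the corresponding crossing times as $\Delta \to 0$ (for the model configuration, $t^{N_1}, t^{N_2} \to \tau$), and that the merged-trajectory modification is anyway confined to a fixed number of mesh cells around the merged trajectory. Hence, since $\mathcal{K}$ lies at a positive distance from the crossing points, for $\Delta$ small enough the scheme coincides, on a neighborhood of $\mathcal{K}$, either with the classical three-point scheme (if $\mathcal{K}$ meets no $\Gamma_i$) or with the single-trajectory scheme of Section \ref{1Bus_Section_existence} for exactly one interface $\Gamma_i$. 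In either case the localized $\BV$ estimates of Corollary \ref{1Bus_oslc3_cor}, together with the uniform $\L{\infty}$ bound, yield, up to a subsequence, a.e. convergence of $(\rho_\Delta)_\Delta$ on $\mathcal{K}$ (see \cite[Appendix A]{HRBook}). Covering $\Omega \backslash \mathcal{N}$ by countably many such compacts and extracting diagonally gives a.e. convergence on $\Omega \backslash \mathcal{N}$, hence, $\mathcal{N}$ being negligible, on $\Omega$, to some $\rho \in \L{\infty}(\Omega, [0,1])$.

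Then I would pass to the limit in the approximate inequalities. Given $\kappa \in [0,1]$ and $\varphi \in \Cc{\infty}(\overline{\Omega} \backslash \{\cC_1, \dots, \cC_M\}, \R^+)$, I decompose $\varphi = \varphi_o + \sum_{i=1}^J \varphi_i$ as in \eqref{partition_unity}. Each $\varphi_i$ is supported in $\Omega_i$, which is disjoint from $\Gamma_j$ for $j \neq i$ and from every crossing point, so for $\Delta$ small the scheme on a neighborhood of $\operatorname{supp} \varphi_i$ is exactly the single-trajectory scheme for $(y_i, q_i)$, and the approximate entropy inequality \eqref{1Bus_aei2} holds with test function $\varphi_i$ and data $(y_i, q_i)$; likewise $\varphi_o$ vanishes along all the interfaces, so the corresponding inequality is the classical discrete Kruzhkov one. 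Summing over $i \in [\![0;J]\!]$ and letting $\Delta \to 0$ --- using the a.e. convergence of $\rho_\Delta$ to $\rho$, of $\Phi_\Delta(\rho_\Delta, \kappa)$ to $\Phi(\rho, \kappa)$ and of $\bF_\Delta(\rho_\Delta)$ to $f(\rho)$, the $\Lloc{1}$ convergences $s_\Delta \to \dot y_i$, $q_\Delta \to q_i$, $y_\Delta \to y_i$, and $\rho_\Delta^0 \to \rho_o$ --- yields \eqref{multibus_ei2} for every such $\varphi$; the same localization of \eqref{1Bus_aci2} yields \eqref{multibus_ci2}. By Proposition \ref{multibus_defs_equivalent}, $\rho$ is then an admissible entropy solution, equivalently a $\cG$-entropy solution, of \eqref{multibus}, and Theorem \ref{multibus_stability_th} forces the whole family $(\rho_\Delta)_\Delta$ --- not merely a subsequence --- to converge a.e. on $\Omega$ to $\rho$.

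The main obstacle is the treatment of the crossing points, which is where this argument genuinely departs from the single-trajectory case; it is handled by the two complementary facts used above --- the merged-trajectory modification lives in time slabs shrinking onto the crossing times, so it is invisible in the limit on compacts away from the $\cC_m$, and Proposition \ref{multibus_defs_equivalent} lets one test only against functions vanishing near the $\cC_m$, so the ad hoc recipe there (mean trajectory, minimum of the constraints) never survives the passage to the limit, in accordance with Remark \ref{crossing_points_rk}. A subsidiary point needing care is that \eqref{1Bus_aei2}--\eqref{1Bus_aci2} were established on a globally Cartesian grid adapted to a single trajectory, so one must check that on $\operatorname{supp} \varphi_i$, and for $\Delta$ small, the present mesh reduces to the mesh of Section \ref{1Bus_Section_mesh_scheme}; this follows from the disjointness of the $\Omega_i$ established in Steps 1--2 of Section \ref{multibus_Section_partition_unity}.
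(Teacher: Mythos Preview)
Your proposal is correct and follows essentially the same route as the paper's proof: localize via the partition of unity \eqref{partition_unity} so that on the support of each $\varphi_i$ the multi-interface scheme reduces to the single-interface one, apply \eqref{1Bus_aei2}--\eqref{1Bus_aci2} and sum, obtain compactness from the localized $\BV$ bounds of Corollary \ref{1Bus_oslc3_cor} away from the (negligible) union of interfaces and crossing points, and conclude by Proposition \ref{multibus_defs_equivalent} and Theorem \ref{multibus_stability_th}. Your write-up is in fact slightly more explicit than the paper's on the $\L{\infty}$ bound and on the shrinking of the merged-trajectory time slabs, but the argument is the same.
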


\begin{proof}
    We make use of the fact that in Definition \ref{multibus_def2}, we only need to consider test functions that vanish at a neighborhood of the crossing 
    points (this is the key observation leading to Remark \ref{crossing_points_rk} here above).
    
    \textit{(i) Proof of the entropy inequalities.} Fix $\varphi \in \Cc{\infty}(\overline{\Omega} \backslash \cup_{m=1}^M \cC_m, \R^+)$, written 
    as $\varphi = \varphi_o + \sum_{i=1}^J \varphi_i$, using the appropriate partition of unity, see Section \ref{multibus_Section_partition_unity}. Since 
    $\varphi_o$ vanishes along all the interfaces, $\rho_\Delta$ verifies inequality \eqref{1Bus_aei2} with $\cR \equiv 0$ on the domain $\Omega_o$ and 
    with test function $\varphi_o$. Indeed, for a sufficiently small $\Delta x > 0$, the scheme we constructed in the previous section reduces to a standard 
    finite volume in $\Omega_o$. Fix now $i \in [\![1;J]\!]$. Since $\varphi_i$ vanishes along all the interfaces but $\Gamma_i$, $\rho_\Delta$ verifies 
    inequality \eqref{1Bus_aei2} with reminder term $\cR_{s_\Delta^{i}}(\kappa, q_\Delta^{i})$ along the trajectory $y_\Delta^{i}$ on the domain $\Omega_i$ and 
    with test function $\varphi_i$, due to the analysis of Section \ref{1Bus_Section_existence}; indeed, in the support of the test function, our scheme for the 
    multi-interface problem reduces to the scheme for the single-interface problem. By summing these previous inequalities, we obtain an approximate version of 
    \eqref{multibus_ei2} verified by $\rho_\Delta$:
    \begin{equation}
        \label{multibus_aei}
        \begin{aligned}
            \int_{0}^{+\infty} \int_{\R} & \biggl( |\rho_\Delta - \kappa| \p_{t}\varphi + \Phi_\Delta (\rho_\Delta ,\kappa) \p_{x} \varphi \biggr) \d x \d t  
            + \int_{\R} |\rho_\Delta^0(x)-\kappa| \varphi(x, 0) \d x \\[5pt]
            & + \sum_{i=1}^J \int_{s_i}^{T_i} \cR_{s_\Delta^{i}(t)}(\kappa, q_\Delta^{i}(t)) \varphi(y_\Delta^{i}(t), t) \d t \geq 
            \bigO{\Delta x} + \bigO{\Delta t}.
        \end{aligned}
	\end{equation}

    \textit{(ii) Proof of the weak constraint inequalities.} Let $\varphi \in \Cc{\infty}(\Omega \backslash \cup_{m=1}^M \cC_m, \R^+)$, written under 
    the form \eqref{partition_unity}. Fix $i \in [\![1;J]\!]$. Since $\varphi_i$ vanishes along all the interfaces but $\Gamma_i$, for a sufficiently small 
    $\Delta x$, $\rho_\Delta$ verifies inequality \eqref{1Bus_aci2} with constraint $q_\Delta^{i}$ along the trajectory $y_\Delta^{i}$ on the domain $\Omega_i^+$ 
    and with test function $\varphi_i$. We obtain an approximate version of \eqref{multibus_aci} verified by $\rho_\Delta$:
    \begin{equation}
        \label{multibus_aci}
        - \iint_{\Omega_i^+} \biggl( \rho_\Delta \p_{t} \varphi + \bF_\Delta (\rho_\Delta) \p_x \varphi \biggr) \d x \d t 
        \leq \int_{s_i}^{T_i} q_\Delta^{i}(t) \varphi_i(y_\Delta^{i}(t), t) \d t + \bigO{\Delta x} + \bigO{\Delta t}.
    \end{equation}

    \textit{(iii) Compactness and convergence.} Compactness of the sequence $(\rho_\Delta)_\Delta$ follows directly from the study of Section 
    \ref{1Bus_Section_compactness} where we derived local $\BV$ bounds for $(\rho_\Delta)_\Delta$ under the assumption \eqref{1Bus_uniform_concavity}.
    Indeed, these local bounds lead to compactness in the domain complementary to the interfaces, we only use the fact that the interfaces together with the 
    crossing points form a closed subset of $\Omega$ with zero Lebesgue measure. Once the a.e. convergence (up to a subsequence) on $\Omega$ to some 
    $\rho \in \L{\infty}(\Omega, [0, 1])$ obtained, we simply pass to the limit in \eqref{multibus_aei}-\eqref{multibus_aci}. This proves that $\rho$ is an 
    admissible solution to \eqref{multibus}. By the uniqueness of Theorem \ref{multibus_stability_th}, the whole sequence converges to $\rho$. This concludes the 
    proof.
\end{proof}

\begin{corollary}
    \label{multibus_wp_cor}
    Fix $T> 0$, $f \in \Ck{2}([0,1], \R^+)$ satisfying \eqref{bell_shaped}-\eqref{1Bus_uniform_concavity} and $\rho_o \in \L{\infty}(\R, [0,1])$. Let 
    $(y_i, q_i)_{i \in [\![1; J]\!]}$ be a finite family of trajectories and constraints defined on $\mathopen]s_i, T_i \mathclose[$ ($0 \leq s_i < T_i$). We suppose that for all 
    $i \in [\![1; J]\!]$, $y_i \in \W{1}{\infty}(\mathopen]s_i, T_i \mathclose[, \R)$ and $q_i \in \L{\infty}(\mathopen]s_i, T_i \mathclose[, \R^+)$. Suppose also that the interfaces $(\Gamma_i)_i$ 
    defined by the trajectories $(y_i)_i$ have a finite number of crossing points.
    Then Problem \eqref{multibus} admits a unique admissible entropy solution.
\end{corollary}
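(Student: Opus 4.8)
The plan is to obtain the corollary by assembling the two main results of this section, since both the existence and the uniqueness statements are, up to the identification of the two notions of solution, already established.

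For existence, I would invoke Theorem \ref{multibus_convergence_th}. Its hypotheses — $f \in \Ck{2}([0,1], \R^+)$ satisfying \eqref{bell_shaped}--\eqref{1Bus_uniform_concavity}, $\rho_o \in \L{\infty}(\R, [0,1])$, a finite family $(y_i, q_i)_{i \in [\![1;J]\!]}$ with $y_i \in \W{1}{\infty}(\mathopen]s_i, T_i\mathclose[, \R)$ and $q_i \in \L{\infty}(\mathopen]s_i, T_i\mathclose[, \R^+)$, and a finite number of crossing points among the interfaces $(\Gamma_i)_i$ — coincide exactly with those of the present corollary. Hence the finite volume scheme of Section \ref{multibus_Section_mesh_scheme} produces, as its a.e. limit, an admissible entropy solution to \eqref{multibus} in the sense of Definition \ref{multibus_def2}.

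For uniqueness, I would combine Proposition \ref{multibus_defs_equivalent} with Theorem \ref{multibus_stability_th}. Applying Theorem \ref{multibus_stability_th} with $\sigma_o = \rho_o$ and $\overset{\sim}{q}_i = q_i$ for all $i \in [\![1;J]\!]$ shows that Problem \eqref{multibus} admits at most one $\cG$-entropy solution in the sense of Definition \ref{multibus_def1}; by the equivalence of Definitions \ref{multibus_def1} and \ref{multibus_def2} established in Proposition \ref{multibus_defs_equivalent}, this is the same as saying that \eqref{multibus} admits at most one admissible entropy solution, necessarily the one built in the existence step.

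The proof is therefore purely a matter of bookkeeping: the substantive work has already been carried out in Theorem \ref{multibus_convergence_th} (compactness of the approximate solutions away from the interfaces and their crossing points, and passage to the limit in the approximate inequalities \eqref{multibus_aei}--\eqref{multibus_aci}) and in Lemma \ref{multibus_Kato_lmm} together with Theorem \ref{multibus_stability_th} (the doubling-of-variables argument and the dissipativity of the germs $\cG_{\dot y_i}(q_i)$). The only point worth checking carefully is that the standing assumptions of Section \ref{multibus_Section_wp} — in particular the admissibility condition \eqref{1Bus_assumption_constraint} on each $q_i$, which guarantees that the germs appearing in Definition \ref{multibus_def1} are well-defined — are in force; beyond this, I expect no real obstacle.
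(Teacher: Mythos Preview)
Your proposal is correct and matches the paper's own proof, which simply states that existence comes from Theorem \ref{multibus_convergence_th} while uniqueness is established by Theorem \ref{multibus_stability_th}. Your additional remark that Proposition \ref{multibus_defs_equivalent} is needed to pass between Definitions \ref{multibus_def1} and \ref{multibus_def2} is a welcome clarification that the paper leaves implicit.
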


\begin{proof}
    Existence comes from Theorem \ref{multibus_convergence_th} while uniqueness is established by Theorem \ref{multibus_stability_th}.
\end{proof}

\section{Numerical experiment with crossing trajectories}

In this section, we perform a numerical test to illustrate the scheme analyzed in 
Section \ref{1Bus_Section_existence} and Section \ref{multibus_Section_existence}. We 
take the GNL flux $f(\rho) = \rho(1 - \rho)$.

We model the following situation. A vehicle breaks down on a road and reduces by half the surrounding traffic flow, which initial state is given by 
$\ds{\rho_o = 0.8 \times \1_{[1, 3]}}$. At some point, a tow truck comes to move the immobile vehicle. We summarized this situation in Figure \ref{fig6}. 
Notice the time interval in which $q_3 \equiv 0.1$. This corresponds to the time needed for the tow truck to move the vehicle. Remark also that the value of the 
constraint on this time interval is smaller than the one when only the broken down vehicle was reducing the traffic flow.

\begin{figure}[!htp]
	\begin{center}
		\includegraphics[scale = 0.66]{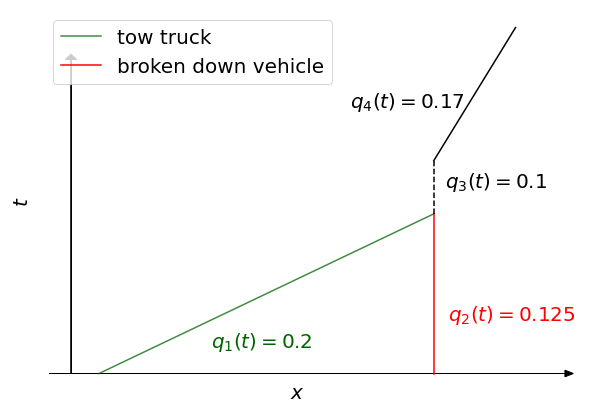}
	\end{center}
	\vspace*{-5mm}
	\caption{A tow truck comes moving an immobile vehicle.}
	\label{fig6}
\end{figure}

The evolution of the numerical solution is represented in Figure \ref{fig7}. Let us comment on the profile of the numerical solution.
\begin{itemize}
    \item At first ($0 \leq t \leq 5.80$), the solution is composed of traveling waves separated by a stationary non-classical shock located at the immobile 
    vehicle position.
    \item When the tow truck catches up with the vehicle ($6.30 \leq t \leq 8.0$), the profile of the numerical solution is the same, but the greater value of the 
    constraint in this time interval changes the magnitude of the non-classical shock; at this point the combined presence of both the tow truck and the 
    immobile vehicle clogs the traffic flow even more. 
    \item Finally, once the tow truck starts again ($t > 8.0$), the traffic congestion is reduced.
\end{itemize}

Notice at time $t=7.44$ the small artifact (circled in red in Figure \ref{fig7}) created by Parts 2-3 in the construction of the approximate solution and 
reproduced by the scheme. This highlights the fact that even if the treatment of the crossing points brings inconsistencies or artifacts to the numerical solution, 
these undesired effects are not amplified by the scheme, and become negligible when one refines the mesh.

\begin{figure}[!htp]
	\begin{center}
		\includegraphics[scale = 0.75]{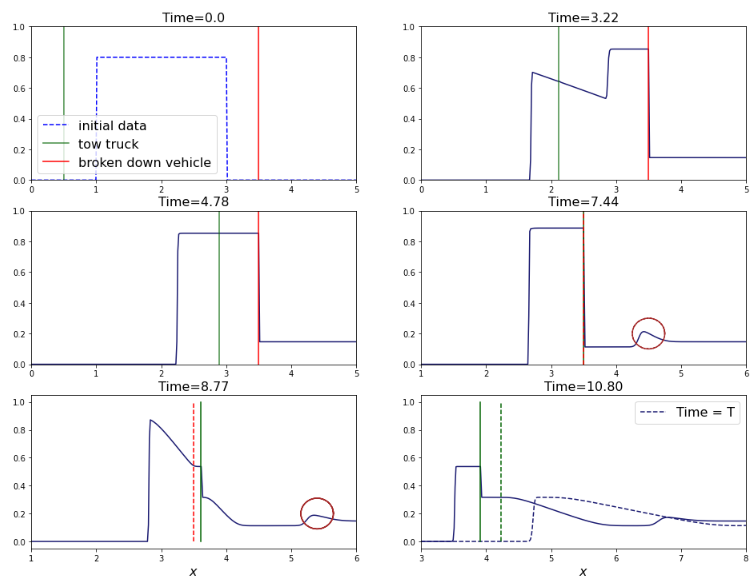}
	\end{center}
	\vspace*{-5mm}
	\caption{The numerical solution at different fixed times; for an animated evolution of the solution, follow: 
    \href{https://www.abrahamsylla.com/numerical-simulations}{https://www.abrahamsylla.com/numerical-simulations}}
	\label{fig7}
\end{figure}

\newpage 

\appendix 

\section{Proof of the OSL bound}
\label{appendix}

We prove in this appendix Lemma \ref{1Bus_oslc1_lmm}. All the notations are taken from Sections \ref{1Bus_Section_mesh_scheme} and 
\ref{1Bus_Section_compactness}. The proof is a simple rewriting of the proof of \cite[Lemma 4.2]{TowersOSLC}.

It will be convenient to write the Engquist-Osher flux under the form:
\[
    \forall a,b \in [0,1], \quad 
    \eo(a, b) = \underbrace{\left( f(a \wedge \overline{\rho}) - \frac{f(\overline{\rho})}{2} \right)}_{q_+(a)} + 
    \underbrace{\left( f(b \vee \overline{\rho}) - \frac{f(\overline{\rho})}{2} \right)}_{q_-(b)},
\]

so that for all $n \in \N$, when $j \in \widehat{\Z}_{n+1}$, the scheme \eqref{1Bus_mf1} can be rewritten as:
\begin{equation}
    \label{mf_eo}
    \rho_{j+1/2}^{n+1} = 
    \rho_{j+1/2}^{n} - \lambda \biggl( q_+\left( \rho_{j+1/2}^{n}\right) + q_-\left( \rho_{j+3/2}^{n}\right)
    - q_+\left( \rho_{j-1/2}^{n}\right) - q_-\left( \rho_{j+1/2}^{n}\right) \biggr).
\end{equation}

\begin{lemma}
    \label{appendix_lmm1}
    For all $n \in \N$ and $j \in \Z$, we have
    \begin{equation}
        \label{appendix_bound1}
        \rho_{j-1/2}^{n} - \rho_{j+1/2}^{n} \leq \frac{1}{\lambda \mu} \quad \text{and} \quad D_j^n \leq \frac{1}{\lambda \mu}.
    \end{equation}
\end{lemma}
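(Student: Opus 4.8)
The plan is to split \eqref{appendix_bound1} into two cheap ingredients: an $[0,1]$-bound coming from stability of the scheme, and the elementary inequality $\lambda\mu \le 1$ coming from the CFL condition together with the uniform concavity assumption.

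\textbf{Step 1.} First I would invoke the $\L{\infty}$ stability of the scheme, Proposition \ref{1Bus_scheme_stability_pp}: for every $n \in \N$ and $j \in \Z$ we have $\rho_{j-1/2}^n, \rho_{j+1/2}^n \in [0,1]$. Hence $\rho_{j-1/2}^n - \rho_{j+1/2}^n \le 1$ and, by definition of $D_j^n$ as $\max\{\rho_{j-1/2}^n - \rho_{j+1/2}^n, 0\}$, also $D_j^n \le 1$. Note that this step needs nothing about the modified mesh near the interface, and it is valid for all $j$, including $j \in \{j_{n+1}-2, j_{n+1}-1, j_{n+1}, j_{n+1}+1\}$.

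\textbf{Step 2.} It then remains to check that $1/(\lambda\mu) \ge 1$. Integrating the uniform concavity bound \eqref{1Bus_uniform_concavity} over $[0,1]$ gives $f'(0) - f'(1) = -\int_0^1 f''(\rho)\,\d{\rho} \ge \mu$, so $\|f'\|_{\L{\infty}([0,1])} \ge \tfrac12\bigl(f'(0) - f'(1)\bigr) \ge \mu/2$. Since $L = \|f'\|_{\L{\infty}} + \|\dot y\|_{\L{\infty}(\mathopen]0,T\mathclose[)} \ge \|f'\|_{\L{\infty}} \ge \mu/2$, the CFL condition \eqref{1Bus_cfl}, namely $2L\lambda \le 1$, yields $\mu\lambda \le 2L\lambda \le 1$, i.e. $1/(\lambda\mu) \ge 1$. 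Combining with Step 1 gives both inequalities of \eqref{appendix_bound1} for all $n$ and $j$.

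\textbf{Main obstacle.} Honestly there is no serious obstacle here: the substance of the lemma is just that the crude constant $1/(\lambda\mu)$ is at least $1$, which the CFL condition forces once the flux is uniformly concave. The one thing worth recording is \emph{why} one bothers with this bound: with $a = \mu\Delta t/(4\Delta x) = \lambda\mu/4$, the parabola $\psi(x) = x - ax^2$ of Lemma \ref{1Bus_oslc1_lmm} is nondecreasing exactly on $[0, 1/(2a)] = [0, 2/(\lambda\mu)]$, and $1/(\lambda\mu) \le 2/(\lambda\mu)$; so this a priori bound is precisely what guarantees that iterating \eqref{1Bus_oslc1} stays in the monotone regime of $\psi$, which is what the subsequent corollaries exploit. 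If one wished to avoid appealing to the $\L{\infty}$-bound, one could instead argue by induction on $n$ directly from the Engquist--Osher form \eqref{mf_eo}, using the monotonicity and uniform concavity of $q_\pm$; but since $\rho_\Delta$ is already known to be $[0,1]$-valued, the two-line argument above is the economical route.
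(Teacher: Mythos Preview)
Your proof is correct. The paper takes a marginally different but equally short route: rather than first bounding the difference by $1$ and then showing $1/(\lambda\mu)\ge 1$, it bounds the difference directly via
\[
    \mu\bigl(\rho_{j-1/2}^n - \rho_{j+1/2}^n\bigr)
    \le -\int_{\rho_{j+1/2}^n}^{\rho_{j-1/2}^n} f''(u)\,\d u
    = f'(\rho_{j+1/2}^n) - f'(\rho_{j-1/2}^n)
    \le 2\|f'\|_{\L\infty}
    \le \frac{1}{\lambda},
\]
using \eqref{1Bus_uniform_concavity} on the interval $[\rho_{j+1/2}^n,\rho_{j-1/2}^n]$ and then the CFL condition \eqref{1Bus_cfl}. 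Your version integrates $-f''\ge\mu$ over the full interval $[0,1]$ instead and combines with the stability bound; the two are morally the same and yield the same conclusion.
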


\begin{proof}
    Indeed, using first the uniform convexity of $f$ and then the CFL condition \eqref{1Bus_cfl}, we can write: 
    \[
        \left(\rho_{j-1/2}^{n} - \rho_{j+1/2}^{n} \right) \mu 
        \leq - \int_{\rho_{j+1/2}^{n}}^{\rho_{j-1/2}^{n}} f''(u) \d{u}
        \leq 2 \|f'\|_{\L{\infty}}
        \leq \frac{\Delta x}{\Delta t},
    \]

    from which we deduce \eqref{appendix_bound1}.
\end{proof}

\begin{lemma}
    \label{appendix_lmm2}
    Let $n \in \N$, $j \in \widehat{\Z}_{n+1}$, $\ds{a = \frac{\lambda \mu}{4}}$ and $\ds{\psi(x) = x - ax^2}$. Then
    \begin{equation}
        \label{appendix_bound2}
        D_j^{n+1} \leq \psi \left( \max\left\{D_{j-1}^n, D_j^n, D_{j+1}^n \right\} \right).
    \end{equation}
\end{lemma}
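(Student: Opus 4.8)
The plan is to follow the proof of \cite[Lemma 4.2]{TowersOSLC} almost verbatim; the restriction $j \in \widehat{\Z}_{n+1}$ is precisely what guarantees that every quantity entering the computation of $\rho_{j-1/2}^{n+1}$ and $\rho_{j+1/2}^{n+1}$ lives on the undistorted, Cartesian part of the mesh, so that both are given by the standard three-point formula \eqref{mf_eo}. First I would subtract the two instances of \eqref{mf_eo} written at the cells $j-1$ and $j$, splitting each increment of $q_+$ (nondecreasing) and of $-q_-$ (nondecreasing) into its incremental ratio. Writing $\delta_k^n := \rho_{k-1/2}^n - \rho_{k+1/2}^n$, this gives a representation
\[
\rho_{j-1/2}^{n+1} - \rho_{j+1/2}^{n+1}
= \lambda\, \alpha_{j-1}\, \delta_{j-1}^n \;+\; \bigl(1 - \lambda \alpha_j - \lambda \gamma_{j-1}\bigr)\, \delta_j^n \;+\; \lambda\, \gamma_j\, \delta_{j+1}^n ,
\]
where $\alpha_k \ge 0$ is an incremental slope of $q_+$ and $\gamma_k \ge 0$ an incremental slope of $-q_-$, both bounded by $\|f'\|_{\L{\infty}}$, so that the middle coefficient is $\ge 1 - 2\|f'\|_{\L{\infty}}\lambda \ge 1 - 2L\lambda \ge 0$ by the CFL condition \eqref{1Bus_cfl}. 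All three coefficients being nonnegative, the monotone structure of the scheme is made explicit at this level.

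The heart of the argument is to upgrade this linear identity to the quadratic bound \eqref{appendix_bound2} using the uniform concavity \eqref{1Bus_uniform_concavity}. The point is that $q_+$ and $-q_-$ inherit from $f$ one-sided curvature bounds, namely $q_+'' \le -\mu$ on $(0, \overline{\rho})$ and $(-q_-)'' \ge \mu$ on $(\overline{\rho}, 1)$, the curvature being $0$ on the complementary intervals where $q_\pm$ are affine. Replacing, in the representation above, the first-order increments of $q_+$ (resp. of $-q_-$) about $\rho_{j-1/2}^n$ (resp. $\rho_{j+1/2}^n$) by their second-order Taylor expansions produces a surplus $+\tfrac{\mu}{2}(\cdot)^2$ as soon as the pair of states at play lies in the uniformly curved region. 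One then distinguishes cases according to the positions of $\rho_{j-3/2}^n, \rho_{j-1/2}^n, \rho_{j+1/2}^n, \rho_{j+3/2}^n$ relative to $\overline{\rho}$: in each configuration at least part of the curvature is available, which, combined with the a priori bound \eqref{appendix_bound1}, yields an estimate of the form
\[
D_j^{n+1} \le D_{\nu(j)}^n - a\bigl(D_{\nu(j)}^n\bigr)^2 = \psi\bigl(D_{\nu(j)}^n\bigr)
\]
for a suitable neighbour $\nu(j) \in \{j-1, j, j+1\}$, with $a = \tfrac{\lambda\mu}{4}$ (the factor $1/4$ rather than $1/2$ leaving room for the configurations in which only a fraction of the curvature can be exploited). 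This $\overline{\rho}$-dependent case distinction is exactly the content of \cite[Lemma 4.2]{TowersOSLC} and is the only genuinely technical step; everything else is bookkeeping with the locally modified mesh, for which $\widehat{\Z}_{n+1}$ has been tailored.

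It then remains to compare $\psi(D_{\nu(j)}^n)$ with $\psi\bigl(\max\{D_{j-1}^n, D_j^n, D_{j+1}^n\}\bigr)$. The parabola $\psi(x) = x - ax^2$ is nondecreasing on $[0, \tfrac{1}{2a}]$; by Lemma \ref{appendix_lmm1}, $D_{\nu(j)}^n \le \tfrac{1}{\lambda\mu} = \tfrac{1}{4a} \le \tfrac{1}{2a}$, and trivially $D_{\nu(j)}^n \le \max\{D_{j-1}^n, D_j^n, D_{j+1}^n\}$, the latter being $\le \tfrac{1}{2a}$ as well. Hence $D_j^{n+1} \le \psi(D_{\nu(j)}^n) \le \psi\bigl(\max\{D_{j-1}^n, D_j^n, D_{j+1}^n\}\bigr)$, which is exactly \eqref{appendix_bound2}. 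I expect the case analysis of the second paragraph to be the main obstacle — making the gain $-a(\cdot)^2$ quantitative and uniform over all positions of the four cell values with respect to $\overline{\rho}$ — whereas the passage from \eqref{mf_eo} to the incremental representation and the final monotonicity argument for $\psi$ are routine.
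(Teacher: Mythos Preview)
Your plan is the right one and matches the paper's proof, which is itself a rewriting of \cite[Lemma 4.2]{TowersOSLC}: subtract the two instances of \eqref{mf_eo}, exploit the quadratic Taylor bound coming from \eqref{1Bus_uniform_concavity} on $q_\pm$, split into cases according to the positions of the central cell values relative to $\overline{\rho}$, and conclude by monotonicity of $\psi$ on $[0,\tfrac{1}{2a}]$ using Lemma \ref{appendix_lmm1}.

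One structural step that your sketch elides, and that the paper makes explicit (Step 3 of the appendix proof), is the preliminary reduction to the situation where the outer neighbours are ordered, namely $\rho_{j-3/2}^n \ge \rho_{j-1/2}^n$ and $\rho_{j+1/2}^n \ge \rho_{j+3/2}^n$. This is achieved by replacing $\rho_{j-3/2}^n$ with $\rho_{j-3/2}^n \vee \rho_{j-1/2}^n$ and $\rho_{j+3/2}^n$ with $\rho_{j+3/2}^n \wedge \rho_{j+1/2}^n$ and invoking the monotonicity of the three-point update function $H$; the resulting auxiliary $\widetilde D_{j\pm1}^n$ are then bounded by $D_{j\pm1}^n$. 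Without this reduction, your incremental identity has coefficients that are nonnegative but do not sum to one, and the terms $\lambda\alpha_{j-1}\delta_{j-1}^n$, $\lambda\gamma_{j+1}\delta_{j+1}^n$ can be negative, so the case analysis on all four values relative to $\overline{\rho}$ is genuinely messier than you suggest. With the reduction in hand, the case split is only on the two central values (four cases, as in the paper), and in each case the curvature gain lands squarely on a nonnegative increment. Also, what one actually obtains is $\rho_{j-1/2}^{n+1} - \rho_{j+1/2}^{n+1} \le \psi(\max\{\cdots\})$ for a suitable max, not $\psi(D_{\nu(j)}^n)$ for a single neighbour; the final step is then the monotonicity of $\psi$, exactly as you describe.
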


\begin{proof}
    We divide the proof in three steps.

    \textbf{Step 1: The function $\psi$} is nonnegative on $[0, 1/a]$ and nondecreasing on $[0, 1/(2a)]$. Note that by \eqref{appendix_bound1}, 
    $\ds{\max\left\{D_{j-1}^n, D_j^n, D_{j+1}^n \right\} \leq 1/(4a)}$, which will allow us to use the monotonicity of $\psi$.

    \textbf{Step 2.} We assume that 
    \begin{equation}
        \label{appendix_lmm2_eq1}
        \rho_{j+1/2}^{n} - \rho_{j+3/2}^{n} \geq 0 \quad \text{and} \quad \rho_{j-3/2}^{n} - \rho_{j-1/2}^{n} \geq 0,
    \end{equation}
    
    and we are going to prove that \eqref{appendix_bound2} holds. Using the uniform concavity assumption of $f$, we can write that 
    \begin{equation}
        \label{appendix_lmm2_eq2}
        \forall a,b \in [0,1], \quad 
        q_+(b) - q_+(a) \leq (b \wedge \overline{\rho} - a \wedge \overline{\rho}) f'(a \wedge \overline{\rho}) 
        - \frac{\mu}{2} (b \wedge \overline{\rho} - a \wedge \overline{\rho})^2.
    \end{equation}

    A similar inequality holds for $q_-$ as well. Using \eqref{mf_eo}, we obtain:
    \begin{equation}
        \label{appendix_lmm2_eq3}
        \begin{aligned}
            \rho_{j-1/2}^{n+1} - \rho_{j+1/2}^{n+1}
            & = \rho_{j-1/2}^{n} - \rho_{j+1/2}^{n} \\[5pt]
            & - \lambda \left( q_+\left( \rho_{j-1/2}^{n}\right) - q_+\left( \rho_{j-3/2}^{n}\right) - q_+\left( \rho_{j+1/2}^{n}\right) 
            + q_+\left( \rho_{j-1/2}^{n}\right) \right)  \\[5pt]
            & - \lambda \left( q_-\left( \rho_{j+1/2}^{n}\right) - q_-\left( \rho_{j-1/2}^{n}\right) - q_-\left( \rho_{j+3/2}^{n}\right) 
            + q_-\left( \rho_{j+1/2}^{n}\right) \right) \\[5pt]
            & = \rho_{j-1/2}^{n} - \rho_{j+1/2}^{n} \\[5pt]
            & + \lambda \left\{ \left( q_+\left( \rho_{j+1/2}^{n}\right) - q_+\left( \rho_{j-1/2}^{n}\right) \right) 
                + \left( q_+\left( \rho_{j-3/2}^{n}\right) - q_+\left( \rho_{j-1/2}^{n}\right) \right) \right. \\[5pt]
            & \left. 
            + \left (q_-\left( \rho_{j+3/2}^{n}\right) - q_-\left( \rho_{j+1/2}^{n}\right) \right) 
            + \left(q_-\left( \rho_{j-1/2}^{n}\right) - q_-\left( \rho_{j+1/2}^{n}\right) \right) \right\} \\[5pt]
            & \leq \rho_{j-1/2}^{n} - \rho_{j+1/2}^{n} \\[5pt] 
            & + \lambda \left( \rho_{j+1/2}^{n} \wedge \overline{\rho} - \rho_{j-1/2}^{n} \wedge \overline{\rho} \right) 
            f'(\rho_{j-1/2}^{n} \wedge \overline{\rho}) 
            - \frac{\lambda \mu}{2} \left( \rho_{j+1/2}^{n} \wedge \overline{\rho} - \rho_{j-1/2}^{n} \wedge \overline{\rho} \right)^2 \\[5pt]
            & + \lambda \left( \rho_{j-3/2}^{n} \wedge \overline{\rho} - \rho_{j-1/2}^{n} \wedge \overline{\rho} \right) 
            f'(\rho_{j-1/2}^{n} \wedge \overline{\rho}) 
            -\frac{\lambda \mu}{2} \left( \rho_{j-3/2}^{n} \wedge \overline{\rho} - \rho_{j-1/2}^{n} \wedge \overline{\rho} \right)^2 \\[5pt]
            & + \lambda \left( \rho_{j+3/2}^{n} \vee \overline{\rho} - \rho_{j+1/2}^{n} \vee \overline{\rho} \right) f'(\rho_{j+1/2}^{n} \vee \overline{\rho}) 
            -\frac{\lambda \mu}{2} \left( \rho_{j+3/2}^{n} \vee \overline{\rho} - \rho_{j+1/2}^{n} \vee \overline{\rho} \right)^2 \\[5pt]
            & + \lambda \left( \rho_{j-1/2}^{n} \vee \overline{\rho} - \rho_{j+1/2}^{n} \vee \overline{\rho} \right) f'(\rho_{j+1/2}^{n} \vee \overline{\rho}) 
            -\frac{\lambda \mu}{2} \left( \rho_{j-1/2}^{n} \vee \overline{\rho} - \rho_{j+1/2}^{n} \vee \overline{\rho} \right)^2,
        \end{aligned}
    \end{equation}

    where the last inequality comes from using \eqref{appendix_lmm2_eq2}. The proof now reduces to four cases, depending on the ordering of $\overline{\rho}$, 
    $\rho_{j-1/2}^{n}$ and $\rho_{j-1/2}^{n}$.

    \textbf{Case 1: $\overline{\rho} \geq \rho_{j-1/2}^{n}, \ \rho_{j+1/2}^{n}$.} Under assumption \eqref{appendix_lmm2_eq1}, we have 
    $\overline{\rho} \geq \rho_{j+3/2}^{n}$ as well. Inequality \eqref{appendix_lmm2_eq3} becomes:
    \begin{equation}
        \label{appendix_lmm2_eq4}
        \begin{aligned}
            \rho_{j-1/2}^{n+1} - \rho_{j+1/2}^{n+1} 
            & \leq \left(1 - \lambda f'(\rho_{j-1/2}^{n}) \right) \left( \rho_{j-1/2}^{n} - \rho_{j+1/2}^{n} \right) + 
            \lambda f'(\rho_{j-1/2}^{n}) \left( \rho_{j-3/2}^{n} \wedge \overline{\rho} - \rho_{j-1/2}^{n} \right) \\[5pt]
            & - \frac{\lambda \mu}{2} \left( \left( \rho_{j-1/2}^{n} - \rho_{j+1/2}^{n} \right)^2 
            + \left( \rho_{j-3/2}^{n} \wedge \overline{\rho} - \rho_{j-1/2}^{n} \right)^2 \right) \\[5pt]
            & \leq \left(1 - \lambda f'(\rho_{j-1/2}^{n}) \right) \left( \rho_{j-1/2}^{n} - \rho_{j+1/2}^{n} \right) + 
            \lambda f'(\rho_{j-1/2}^{n}) \left( \rho_{j-3/2}^{n} \wedge \overline{\rho} - \rho_{j-1/2}^{n} \right) \\[5pt]
            & - \frac{\lambda \mu}{4} \left( \left( \rho_{j-1/2}^{n} - \rho_{j+1/2}^{n} \right)^2 
            + \left( \rho_{j-3/2}^{n} \wedge \overline{\rho} - \rho_{j-1/2}^{n} \right)^2 \right) \\[5pt]
            & \leq \left(1 - \lambda f'(\rho_{j-1/2}^{n}) \right) \left( \rho_{j-1/2}^{n} - \rho_{j+1/2}^{n} \right) + 
            \lambda f'(\rho_{j-1/2}^{n} \left( \rho_{j-3/2}^{n} \wedge \overline{\rho} - \rho_{j-1/2}^{n} \right) \\[5pt]
            & - \frac{\lambda \mu}{4} \max\left\{\rho_{j-1/2}^{n} - \rho_{j+1/2}^{n}, 
            \rho_{j-3/2}^{n} \wedge \overline{\rho} - \rho_{j-1/2}^{n} \right\}^2,
        \end{aligned}
    \end{equation}

    where the last inequality comes from the bound: $\ds{a^2+b^2 \geq \max\{a, b\}^2}$. The CFL condition \eqref{1Bus_cfl} ensures that the two first 
    terms of the right-hand side of the last inequality are a convex combination of $\ds{\left( \rho_{j-1/2}^{n} - \rho_{j+1/2}^{n} \right)}$ and 
    $\ds{\left( \rho_{j-3/2}^{n} \wedge \overline{\rho} - \rho_{j-1/2}^{n} \right)}$. Consequently, inequality \eqref{appendix_lmm2_eq4} then becomes 
    \[
        \rho_{j-1/2}^{n+1} - \rho_{j+1/2}^{n+1} \leq 
        \psi \left( \max \left\{ \rho_{j-1/2}^{n} - \rho_{j+1/2}^{n}, \rho_{j-3/2}^{n} \wedge \overline{\rho} - \rho_{j-1/2}^{n} \right\} \right).
    \]

    Since $\ds{\rho_{j-3/2}^{n} \wedge \overline{\rho} - \rho_{j-1/2}^{n} \leq \rho_{j-3/2}^{n} - \rho_{j-1/2}^{n}}$, the monotonicity of $\psi$ ensures that 
    \[
        \begin{aligned}
            \rho_{j-1/2}^{n+1} - \rho_{j+1/2}^{n+1} 
            & \leq \psi \left( \max \left\{ \rho_{j-1/2}^{n} - \rho_{j+1/2}^{n}, \rho_{j-3/2}^{n} - \rho_{j-1/2}^{n} \right\} \right) \\[5pt]
            & \leq \psi \left( \max \left\{ D_{j-1}^n, D_j^n \right\} \right) \\[5pt]
            & \leq \psi \left( \max \left\{ D_{j-1}^n, D_j^n, D_{j+1}^n \right\} \right).
        \end{aligned}
    \]

    Since the right-hand side of this inequality is nonnegative, we can replace its left-hand side by $D_j^{n+1}$, which concludes the proof in this case.

    \textbf{Case 2: $\overline{\rho} \leq \rho_{j-1/2}^{n}, \ \rho_{j+1/2}^{n}$.} The proof of in this case similar to the last one so we omit the details.

    \textbf{Case 3: $\rho_{j+1/2}^{n} \leq \overline{\rho} \leq \rho_{j-1/2}^{n}$.} Under Assumption \eqref{appendix_lmm2_eq1}, we have the following ordering:
    \[
        \rho_{j+3/2}^{n} \leq \rho_{j+1/2}^{n} \leq \overline{\rho} \leq \rho_{j-1/2}^{n} \leq \rho_{j-3/2}^{n}.
    \]

    Inequality \eqref{appendix_lmm2_eq3} becomes 
    \[
        \begin{aligned}
            \rho_{j-1/2}^{n+1} - \rho_{j+1/2}^{n+1} 
            & \leq \rho_{j-1/2}^{n} - \rho_{j+1/2}^{n} 
            - \frac{\lambda \mu}{2} \left( (\rho_{j-1/2}^{n} - \overline{\rho})^2 + (\overline{\rho} - \rho_{j+1/2}^{n})^2 \right) \\[5pt]
            & \leq \rho_{j-1/2}^{n} - \rho_{j+1/2}^{n} - \frac{\lambda \mu}{4} (\rho_{j-1/2}^{n} - \rho_{j+1/2}^{n})^2,
        \end{aligned}
    \]

    where we used the inequality $2(a^2 + b^2) \geq (a + b)^2$. From here, we can conclude as in Case 1.

    \textbf{Case 4: $\rho_{j-1/2}^{n} \leq \overline{\rho} \leq \rho_{j+1/2}^{n}$.} Using the decomposition 
    \[
        \rho_{j-1/2}^{n} - \rho_{j+1/2}^{n}
        = (\rho_{j-1/2}^{n} \wedge \overline{\rho} - \rho_{j+1/2}^{n} \wedge \overline{\rho}) 
        + (\rho_{j-1/2}^{n} \vee \overline{\rho} - \rho_{j+1/2}^{n} \vee \overline{\rho}),
    \]

    inequality \eqref{appendix_lmm2_eq3} becomes
    \begin{equation}
        \label{appendix_lmm2_eq5}
        \begin{aligned}
            \rho_{j-1/2}^{n+1} - \rho_{j+1/2}^{n+1}
            & \leq \left(1 - \lambda f'(\rho_{j-1/2}^n) \right) 
            \left( \rho_{j-1/2}^{n} \wedge \overline{\rho} - \rho_{j+1/2}^{n} \wedge \overline{\rho} \right)
            + \lambda f'(\rho_{j-1/2}^n) \left( \rho_{j-3/2}^{n} \wedge \overline{\rho} - \rho_{j-1/2}^{n} \wedge \overline{\rho} \right) \\[5pt]
            & + \left(1 + \lambda f'(\rho_{j+1/2}^n) \right) 
            \left( \rho_{j-1/2}^{n} \vee \overline{\rho} - \rho_{j+1/2}^{n} \vee \overline{\rho} \right)
            - \lambda f'(\rho_{j+1/2}^n) \left( \rho_{j+1/2}^{n} \vee \overline{\rho} - \rho_{j+3/2}^{n} \vee \overline{\rho} \right) \\[5pt]
            & - \frac{\lambda \mu}{2} \left\{ \left( \rho_{j-1/2}^{n} \wedge \overline{\rho} - \rho_{j+1/2}^{n} \wedge \overline{\rho} \right)^2
            + \left( \rho_{j-3/2}^{n} \wedge \overline{\rho} - \rho_{j-1/2}^{n} \wedge \overline{\rho} \right)^2 \right. \\[5pt]
            & \left. + \left( \rho_{j-1/2}^{n} \vee \overline{\rho} - \rho_{j+1/2}^{n} \vee \overline{\rho} \right)^2 
            + \left( \rho_{j+1/2}^{n} \vee \overline{\rho} - \rho_{j+3/2}^{n} \vee \overline{\rho} \right)^2 \right\} \\[5pt]
            & \leq \left(1 - \lambda f'(\rho_{j-1/2}^n) \right) 
            \left( \rho_{j-1/2}^{n} \wedge \overline{\rho} - \rho_{j+1/2}^{n} \wedge \overline{\rho} \right)
            + \lambda f'(\rho_{j-1/2}^n) \left( \rho_{j-3/2}^{n} \wedge \overline{\rho} - \rho_{j-1/2}^{n} \wedge \overline{\rho} \right) \\[5pt]
            & + \left(1 + \lambda f'(\rho_{j+1/2}^n) \right) 
            \left( \rho_{j-1/2}^{n} \vee \overline{\rho} - \rho_{j+1/2}^{n} \vee \overline{\rho} \right)
            - \lambda f'(\rho_{j+1/2}^n) \left( \rho_{j+1/2}^{n} \vee \overline{\rho} - \rho_{j+3/2}^{n} \vee \overline{\rho} \right) \\[5pt]
            & - \frac{\lambda \mu}{2} \left\{ \left( \rho_{j-1/2}^{n} \wedge \overline{\rho} - \rho_{j+1/2}^{n} \wedge \overline{\rho} \right)^2 
            + \left( \rho_{j-1/2}^{n} \vee \overline{\rho} - \rho_{j+1/2}^{n} \vee \overline{\rho} \right)^2 \right\}.
        \end{aligned}
    \end{equation}

    The CFL condition \eqref{1Bus_cfl} and the ordering $\rho_{j+1/2}^{n} \leq \overline{\rho} \leq \rho_{j-1/2}^{n}$ result in 
    \[
        \left(1 - \lambda f'(\rho_{j-1/2}^n) \right) \left( \rho_{j-1/2}^{n} \wedge \overline{\rho} - \rho_{j+1/2}^{n} \wedge \overline{\rho} \right) \leq 0 
        \quad \text{and} \quad 
        \left(1 + \lambda f'(\rho_{j+1/2}^n) \right) \left( \rho_{j-1/2}^{n} \vee \overline{\rho} - \rho_{j+1/2}^{n} \vee \overline{\rho} \right) \leq 0
    \]

    so we can replace \eqref{appendix_lmm2_eq5} by 
    \[
        \begin{aligned}
            \rho_{j-1/2}^{n+1} - \rho_{j+1/2}^{n+1}
            & \leq \lambda f'(\rho_{j-1/2}^n) \left( \rho_{j-3/2}^{n} \wedge \overline{\rho} - \rho_{j-1/2}^{n} \wedge \overline{\rho} \right) 
            - \lambda f'(\rho_{j+1/2}^n) \left( \rho_{j+1/2}^{n} \vee \overline{\rho} - \rho_{j+3/2}^{n} \vee \overline{\rho} \right) \\[5pt]
            & - \frac{\lambda \mu}{2} \left\{ \left( \rho_{j-1/2}^{n} \wedge \overline{\rho} - \rho_{j+1/2}^{n} \wedge \overline{\rho} \right)^2 
            + \left( \rho_{j-1/2}^{n} \vee \overline{\rho} - \rho_{j+1/2}^{n} \vee \overline{\rho} \right)^2 \right\} \\[5pt]
            & \leq \frac{1}{2} \left( \left( \rho_{j-3/2}^{n} \wedge \overline{\rho} - \rho_{j-1/2}^{n} \wedge \overline{\rho} \right) 
            + \left( \rho_{j+1/2}^{n} \vee \overline{\rho} - \rho_{j+3/2}^{n} \vee \overline{\rho} \right) \right) \\[5pt]
            & - \frac{\lambda \mu}{4} \left\{ \left( \rho_{j-1/2}^{n} \wedge \overline{\rho} - \rho_{j+1/2}^{n} \wedge \overline{\rho} \right)^2 
            + \left( \rho_{j-1/2}^{n} \vee \overline{\rho} - \rho_{j+1/2}^{n} \vee \overline{\rho} \right)^2 \right\} \\[5pt]
            & \leq \psi \left( \max\left\{\left( \rho_{j-3/2}^{n} \wedge \overline{\rho} - \rho_{j-1/2}^{n} \wedge \overline{\rho} \right), 
            \left( \rho_{j-1/2}^{n} \vee \overline{\rho} - \rho_{j+1/2}^{n} \vee \overline{\rho} \right) \right\}\right),
        \end{aligned}
    \]

    and we exploit the monotonicity of $\psi$ to conclude.

    \textbf{Step 3: We no longer assume \eqref{appendix_lmm2_eq1}, and we get back to the general case.} Let us introduce
    \[
        u_{j-3/2}^n = \rho_{j-3/2}^n \vee \rho_{j-1/2}^n, \; u_{j-1/2}^n = \rho_{j-1/2}^n, \; u_{j+1/2}^n = \rho_{j+1/2}^n, \; 
        u_{j+3/2}^n = \rho_{j+3/2}^n \wedge \rho_{j-1/2}^n,
    \]

    and 
    \[
        u_{j-1/2}^{n+1} = H(u_{j-3/2}^n, u_{j-1/2}^n, u_{j+1/2}^n); \quad u_{j+1/2}^{n+1} = H(u_{j-1/2}^n, u_{j+1/2}^n, u_{j+3/2}^n).
    \]

    Using the monotonicity of $H$, we get:
    \[
        \begin{aligned}
            \rho_{j-1/2}^{n+1} - \rho_{j+1/2}^{n+1} 
            & = H(\rho_{j-3/2}^n, \rho_{j-1/2}^n, \rho_{j+1/2}^n) - H(\rho_{j-1/2}^n, \rho_{j+1/2}^n, \rho_{j+3/2}^n) \\[5pt]
            & \leq H(u_{j-3/2}^n, u_{j-1/2}^n, u_{j+1/2}^n) - H(u_{j-1/2}^n, u_{j+1/2}^n, u_{j+3/2}^n) = u_{j-1/2}^{n+1} - u_{j+1/2}^{n+1}.
        \end{aligned}
    \]

    Since $u_{j+1/2}^{n} - u_{j+3/2}^{n} \geq 0$ and $u_{j-3/2}^{n} - u_{j-1/2}^{n} \geq 0$, Step 2 ensures that 
    \[
        {\overset{\sim}{D}}_j^{n+1} 
        \leq \psi \left( \max\left\{{\overset{\sim}{D}}_{j-1}^n, {\overset{\sim}{D}}_j^n, {\overset{\sim}{D}}_{j+1}^n \right\} \right), \quad 
        {\overset{\sim}{D}}_j^n = \max\left\{ u_{j-1/2}^n - u_{j+1/2}^n, 0 \right\}.
    \]

    Clearly, 
    \[
        {\overset{\sim}{D}}_{j-1}^n \leq D_{j-1}^n, \quad {\overset{\sim}{D}}_j^n = D_j^n, \quad {\overset{\sim}{D}}_{j+1}^n \leq D_{j+1}^n.
    \]

    Using the monotonicity of $\psi$, we get:
    \[
        \rho_{j-1/2}^{n+1} - \rho_{j+1/2}^{n+1} 
        \leq u_{j-1/2}^{n+1} - u_{j+1/2}^{n+1}
        \leq \psi \left( \max\left\{D_{j-1}^n, D_j^n, D_{j+1}^n \right\} \right),
    \]

    concluding the proof.
\end{proof}

\newpage

{\small
  \bibliography{multibus}
  \bibliographystyle{abbrv}
}

\end{document}